\renewcommand\@biblabel[1]{#1.}
\def\keywordsname{\textbf{Keywords}}
  \def\mathsubclassname{\textbf{2010 AMS Subject Classification}}
\newtheorem{teo}{Theorem}[section]
\newcommand\be{\begin{equation}}
\newcommand\ee{\end{equation}}
\newcommand\pder[2][]{\ensuremath{\frac{\partial#1}{\partial#2}}}
\newcommand{\dee}{d}
\newcommand{\Ste}{\text{Ste}}
\newcommand{\Bt}{\text{Bi}}
 \newtheorem{thm}{Theorem}[section]
 \newtheorem{corollary}[thm]{Corollary}
 \theoremstyle{definition}
 \theoremstyle{remark}
 \newtheorem{remark}[thm]{Remark}
 \numberwithin{equation}{section}
\begin{document}

\title{APPROXIMATE SOLUTIONS TO ONE-PHASE STEFAN-LIKE PROBLEMS WITH SPACE-DEPENDENT LATENT HEAT}

\author{
Julieta Bollati$^{1}$,  Domingo A. Tarzia $^{1}$\\ \\
\small {{$^1$} Depto. Matem\'atica - CONICET, FCE, Univ. Austral, Paraguay 1950} \\  
\small {S2000FZF Rosario, Argentina.}\\
\small{Email: JBollati@austral.edu.ar; DTarzia@austral.edu.ar.} 
}
\date{}

\maketitle

\begin{abstract}
The work in this paper concerns the study of different approximations for   one-dimensional one-phase Stefan-like problems with a space-dependent latent heat. It is considered two different problems, which differ  from each other in  their boundary condition imposed at the fixed face: Dirichlet and Robin conditions.
The approximate solutions are obtained by applying the heat balance integral method (HBIM), a modified heat balance integral method, the refined integral method (RIM) .  Taking advantage of the exact analytical solutions we compare and test the accuracy of the approximate solutions. The analysis is carried out using the  dimensionless generalized Stefan number (Ste) and Biot number (Bi). It is also studied the case when Bi goes to infinity in the problem with a convective condition, recovering the approximate  solutions  when a temperature condition is imposed at the fixed face. Some numerical simulations are provided in order to  assert which of the approximate integral methods turns out to be optimal. Moreover, we pose an approximate technique based on minimizing the least-squares error, obtaining also approximate solutions for the classical Stefan problem.\\ \\
\keywords{Stefan problem, variable latent heat, heat balance integral method, refined heat balance integral method, exact solutions.
}
\end{abstract}



\maketitle
\section{Introduction}

Stefan problems model  heat transfer processes that involve a change of phase. They constitute a broad field of study since they appear in  a great number of mathematical and industrial significance problems 
 \cite{AlSo1993}, \cite{Ca1984}, \cite{Gu2003}, \cite{Lu1991}. A large bibliography on the subject is given in \cite{Ta2000} and a review on analytical  solutions in \cite{Ta2011}.

The Stefan problem with a space-dependent latent heat can be  found in several physical processes.  In \cite{SwVoPa2000}, it was developed a mathematical model for the shoreline movement in a sedimentary basin using an analogy with the one-phase melting Stefan problem with a variable latent heat. Besides, in  \cite{ZhShZh2018}, it was introduced a two-phase Stefan problem with a general type of space-dependent latent heat from the background of the artificial ground-freezing technique.

The assumption of  variable latent heat not only becomes meaningful in the study of the shoreline movement or in the soil freezing techniques but also in the nanoparticle melting \cite{RiMy2016} and in the one-dimensional consolidation with threshold gradient \cite{ZhBuLu2013}. More references dealing with non-constant latent heat can be found in: \cite{VoSwPa2004}, \cite{ZhWaBu2014},\cite{ZhXi2015}, \cite{SaTa2011-a}, \cite{BoTa2018-CAA}, \cite{BoTa2018-ZAMP}, \cite{Do2014}, \cite{Mc1991}, \cite{ZhHuLiZhZh2018}, \cite{Pr1970}, \cite{Go2002}.

  In this paper we are going to consider two different  Stefan-like problems (P) and (P$_h$) with space-dependent latent heat imposing different conditions at the fixed boundary.
  The first problem to consider can be stated as follows:
  
\noindent\textbf{Problem (P)}. Find the location of the free boundary $x=s(t)$ and the temperature $T=T(x,t)$ at the liquid region $0<x<s(t)$ such that:\\[-0.25cm]
\begin{subequations}\label{Pinfty-A}
\be
 \pder[T]{t}=a^2 \pder[^2T]{x^2},\qquad  0<x<s(t), \quad t>0, \label{EcCalor:1F-pos-tempinfty-A}
\ee
\be 
  T(0,t)=\theta_{_\infty} t^{\alpha/2},\qquad  t>0, \label{FrontFija:1F-pos-tempinfty-A} 
\ee
\be 
  T(s(t),t)=0,\qquad  t>0, \label{TempFront:1F-pos-tempinfty-A} 
\ee
\be 
  k\pder[T]{x}(s(t),t)=-\gamma s(t)^{\alpha} \dot s(t), \qquad t>0, \label{CondStefan:1F-pos-tempinfty-A}
\ee
\be 
  s(0)=0,\label{FrontInicial:1F-pos-tempinfty-A} 
\ee
\end{subequations}

The equation  (\ref{EcCalor:1F-pos-tempinfty-A})  is the heat conduction equation in the liquid region where $a^2=\frac{k}{\rho c}$  is the diffusion coefficient being $k$ the thermal conductivity, $\rho$ the density mass and $c$ the specific heat capacity.  At  $x=0$, a Dirichlet condition (\ref{FrontFija:1F-pos-tempinfty-A}) is  imposed. It must be noticed that the temperature at the fixed boundary is time-dependent and it is characterized by a parameter $\theta_{_\infty}>0$. In addition, condition (\ref{TempFront:1F-pos-tempinfty-A}) represents the fact that the phase change temperature  is assumed to be 0 without loss of generality, condition (\ref{CondStefan:1F-pos-tempinfty-A}) is the corresponding Stefan condition and (\ref{FrontInicial:1F-pos-tempinfty-A}) is the initial position of the free boundary.

The remarkable feature of the problem is related to the condition at
the interface given by the Stefan condition (\ref{CondStefan:1F-pos-tempinfty-A}), where the latent heat by unit of volume is space-dependent defined by a power function of the position $\tfrac{\gamma}{\rho} x^{\alpha}(t)$ with $\gamma$ a given positive constant and  $\alpha$ an arbitrarily non-negative real value.

The second problem (P$_h$) arises by imposing a convective (Robin) condition at the fixed face $x=0$ instead of a Dirichlet one.  In mathematical terms, we can define (P$_h$) as:

\noindent \textbf{Problem (P$_h$)}. Find the  location of the free boundary $x=s_{_{h}}(t)$ and the temperature  $T_{h}=T_{h}(x,t)$  at the liquid region $0<x<s_{h}(t)$   such that equations (\ref{EcCalor:1F-pos-tempinfty-A}), (\ref{TempFront:1F-pos-tempinfty-A})-(\ref{FrontInicial:1F-pos-tempinfty-A}) are satisfied, together with the Robin condition
\begin{equation}\label{FrontFijaConvectiva}
k\pder[T]{x}(0,t)=\frac{h}{\sqrt{t}}\left[T(0,t)-\theta_{_\infty}t^{\alpha/2} \right],\qquad t>0. \tag{\ref{FrontFija:1F-pos-tempinfty-A}$^\star$}
\end{equation}  
Condition (\ref{FrontFijaConvectiva}) states that the incoming heat flux at the fixed face is proportional to the difference between the material temperature and the ambient temperature.
Here, $\theta_{_\infty}t^{\alpha/2}$ characterizes the bulk temperature at a large distance from the fixed face $x = 0$ and $h$ represents the heat transfer at the fixed face. We will work under the assumption that $h>0$
and \mbox{$0<T_{{h}}(0,t)<\theta_{_\infty}t^{\alpha/2}$} in order to guarantee the melting process.

 The exact solution to   problem (P) was given  in \cite{ZhWaBu2014} for integer non-negative values of $\alpha$ and was generalized in \cite{ZhXi2015}  by taking $\alpha$ as a real non negative constant. Besides, the exact solution of the problem (P$_h$) was provided in \cite{BoTa2018-CAA}.

It is known that due to the non-linear nature of the Stefan problem, exact solutions are limited to a few cases and therefore it is necessary to solve them either numerically or approximately. 

The idea in this paper is to take advantage of the exact solutions available in the literature  testing the accuracy of different approximate integral  methods.

The heat balance integral method, introduced by Goodman \cite{Go1958}, is an approximate technique which is usually employed for solving the location of the free boundary in phase-change problems. It  consists in the transformation of  the heat equation into an ordinary differential equation in time, assuming a quadratic profile in space for the temperature. For those profiles, several variants have been introduced in \cite{Wo2001} and  \cite{SaSiCo2006}. In addition,  in \cite{Hr2009-a}, \cite{Hr2009-b}, \cite{Mi2012}, \cite{MiMy2010-a}  this method has been applied defining new accurate temperature profiles. Moreover, for the case $\alpha=0$, the explicit solution to the problem (P$_{{h}}$) for the two-phase process was given in \cite{Ta2017} and this was useful to obtain the accuracy of different heat balance integral methods to problem (P$_{{h}}$) in \cite{BoSeTa2018}.


The paper will be structured as follows: in Section 2 we will give a briefly introduction about the approximate methods to be implemented. Then, in Section 3, we will recall the exact solution to  problem (P) that considers a Dirichlet condition at the fixed face and we will get some different approximate solutions that will be tested with the exact one. In Section 4, we will present the exact solution to the problem with a Robin condition at the fixed face, i.e. problem (P$_{{h}}$). We are going to implement the different approximate methods and we will test their accuracy. In all cases, we are going to provide numerical examples and comparisons. In addition, we will show that the approximate solutions to problem  (P$_{{h}}$) converge to the approximate solutions to problem (P) when the heat transfer coefficient $h$ goes to infinity.
Finally, in Section 5, we will implement an approximate method that consists in minimizing the least-squares error as in \cite{RiMyMc2019}. For the case $\alpha=0$ we obtain different approximations for the problems (P) and (P$_{{h}}$) by using the least-squares approximate method.

\section{Heat balance integral methods}

The classical heat balance integral method, described for first time in \cite{Go1958}, was designed to approximate problems involving phase-changes. This method consists in changing the heat equation (\ref{EcCalor:1F-pos-tempinfty-A}) by an ordinary differential equation in time that arises by: assuming a suitable temperature profile consistent with the boundary conditions, integrating (\ref{EcCalor:1F-pos-tempinfty-A})  with respect to the spacial variable in an appropiate interval, and replacing the Stefan condition (\ref{CondStefan:1F-pos-tempinfty-A}) by a new equation obtained from the phase-change temperature (\ref{TempFront:1F-pos-tempinfty-A}).

Therefore, if we derive condition (\ref{TempFront:1F-pos-tempinfty-A}) with respect to time,  and take into account the heat equation (\ref{EcCalor:1F-pos-tempinfty-A}) we get
\be
\pder[T]{x}(s(t),t) \dot s(t)+a^2 \pder[^2 T]{x^2} (s(t),t)=0. 
\ee
Clearing $\dot s$ and replacing it in the Stefan condition (\ref{CondStefan:1F-pos-tempinfty-A}) it gives
\be
\frac{k}{\gamma s^{\alpha}(t)}\left[ \pder[T]{x}(s(t),t) \right]^2= a^2 \pder[^2 T]{x^2} (s(t),t). \tag{\ref{CondStefan:1F-pos-tempinfty-A}$^\star$}
\ee
This last condition is going to substitute the Stefan condition in the approximated problem obtained from the classical heat balance integral method.

On the other hand, using equation  (\ref{EcCalor:1F-pos-tempinfty-A}) and the condition (\ref{TempFront:1F-pos-tempinfty-A}) we have
\begin{subeqnarray*}
\dfrac{\dee}{\dee t} \int\limits_{0}^{s(t)} T(x,t) \dee x&=&\int\limits_{0}^{s(t)} \pder[T]{t}(x,t)\dee x +T(s(t),t)\dot{s}(t) \\ 
&=& \int\limits_{0}^{s(t)} a^2 \pder[^2 T]{x^2}(x,t)\dee x=a^2\left[\pder[T]{x}(s(t),t)-\pder[T]{x}(0,t) \right].
\end{subeqnarray*}
Then, by applying the Stefan condition  (\ref{CondStefan:1F-pos-tempinfty-A}) it results that
\be 
\frac{\dee}{\dee t} \int\limits_{0}^{s(t)} T(x,t) \dee x= -a^2\left[\frac{\gamma}{k}s^{\alpha} (t) \dot s(t)+\pder[T]{x}(0,t) \right].\tag{\ref{EcCalor:1F-pos-tempinfty-A}$^\star$}
\ee

The {\bf  classical heat balance integral method}, approximate problem (P) through  a new problem that arises from replacing the heat equation (\ref{EcCalor:1F-pos-tempinfty-A}) by (\ref{EcCalor:1F-pos-tempinfty-A}$^\star$) and the Stefan condition (\ref{CondStefan:1F-pos-tempinfty-A}) by  (\ref{CondStefan:1F-pos-tempinfty-A}$^\star$) keeping the rest of the conditions of (P) the same. In short,  the method consists in solving the  problem goberned by (\ref{EcCalor:1F-pos-tempinfty-A}$^\star$), (\ref{FrontFija:1F-pos-tempinfty-A}),(\ref{TempFront:1F-pos-tempinfty-A}), (\ref{CondStefan:1F-pos-tempinfty-A}$^\star$) and (\ref{FrontInicial:1F-pos-tempinfty-A}). A priori, this method will work better than the classical one due to the fact that it changes less conditions from the exact problem.

In \cite{Wo2001}, a {\bf modified integral balance method} is presented. It postulates to change only the heat equation keeping the same the rest of conditions, even the Stefan condition. It means that it consists in solving an approximate problem given by (\ref{EcCalor:1F-pos-tempinfty-A}$^\star$), (\ref{FrontFija:1F-pos-tempinfty-A}), (\ref{TempFront:1F-pos-tempinfty-A}), (\ref{CondStefan:1F-pos-tempinfty-A}) and (\ref{FrontInicial:1F-pos-tempinfty-A}).

On the other hand, from the heat equation  (\ref{EcCalor:1F-pos-tempinfty-A}), and the condition (\ref{TempFront:1F-pos-tempinfty-A}) we have
\begin{subeqnarray*}
\int\limits_0^{s(t)} \int\limits_0^x \pder[T]{t}(z,t) \dee z \dee x &=& \int\limits_0^{s(t)}  \int\limits_0^x a^2 \pder[^2T]{z^2}(z,t) \dee z\; \dee x \\
&=& \int\limits_0^{s(t)} a^2 \left[\pder[T]{x}(x,t)-\pder[T]{x}(0,t) \right] \dee x  \\
&=& a^2 \left[T(s(t),t)-T(0,t)-\pder[T]{x}(0,t)s(t) \right],
\end{subeqnarray*}
that is to say
\be
\int\limits_0^{s(t)} \int\limits_0^x \pder[T]{t}(z,t) \dee z \dee x =  -a^2 \left[T(0,t)+\pder[T]{x}(0,t)s(t) \right].\tag{\ref{EcCalor:1F-pos-tempinfty-A}$^{\dag}$}
\ee

The  {\bf refined integral method} introduced in \cite{SaSiCo2006} suggests to solve an approximate problem given by (\ref{EcCalor:1F-pos-tempinfty-A}$^\dag$), (\ref{FrontFija:1F-pos-tempinfty-A}), (\ref{TempFront:1F-pos-tempinfty-A}), (\ref{CondStefan:1F-pos-tempinfty-A}) and (\ref{FrontInicial:1F-pos-tempinfty-A}). That is to say, to replace the heat equation (\ref{EcCalor:1F-pos-tempinfty-A}) by (\ref{EcCalor:1F-pos-tempinfty-A}$^{\dag}$).

In all cases, to solve the above approximated problems, it is necessary to adopt a suitable profile for the temperature. Throughout  this paper we will assume a quadratic profile in space
\begin{equation}\label{Perfil}
\widetilde{T}(x,t)=t^{\alpha/2}\theta_{_\infty}\left[ \widetilde{A}\left( 1-\frac{x}{\widetilde{s}(t)}\right)+\widetilde{B}\left(  1-\frac{x}{\widetilde{s}(t)}\right)^2\right], 
\end{equation}

where $\widetilde{T}$ and $\widetilde{s}$ will be approximations of $T$ y $s$ respectively. We can notice that in the chosen  profile a power function of time arises in order to be compatible with the boundary conditions imposed in the exact problem.

It is worth to mention that for the approximations to the problem (P$_{h}$), it will be enough to consider the same approximate problems stated for (P),  changing only the boundary condition (\ref{FrontFija:1F-pos-tempinfty-A}) by (\ref{FrontFija:1F-pos-tempinfty-A}$^\star$).

\section{One-phase Stefan problem with Dirichlet condition}

\subsection{Exact solution}

\medskip

Before introducing the different approaching methods for problem (P),  we  present the exact solution, which was given in \cite{ZhWaBu2014} and \cite{ZhXi2015} for the cases when $\alpha\in\mathbb{N}_0$ and $\alpha\in\mathbb{R}^+\setminus\mathbb{N}_0$, respectively.

Let us define the following non-dimensional parameter 
\be
\text{Ste}=\frac{k\theta_{_\infty}}{\gamma a^{\alpha+2}}\label{Ste}
\ee
which is called generalized Stefan number. 
We use the word ``generalized''  since in case that the latent heat $l$ is constant, i.e. $\alpha=0$, we can  recover the usual formula for the Stefan number, which 
assuming a zero phase-change temperature is given by $\text{Ste}=\frac{c\theta_\infty}{l}$. Notice that if we take $\alpha=0$ then the Dirichlet condition at the fixed face is given by $\theta_\infty$ and from the Stefan condition (\ref{CondStefan:1F-pos-tempinfty-A})  the latent heat becomes $l=\gamma/\rho$.

Then, if we combine the results found  in    \cite{ZhWaBu2014} and \cite{ZhXi2015} we can rewrite the solution of the problem (P) (as it was done in the appendix of \cite{BoTa2018-EJDE}), obtaining for each $\alpha\in\mathbb{R}^+_0$ that: 
\begin{align}
&T(x,t)=t^{\alpha/2}\left[AM\left(-\frac{\alpha}{2},\frac{1}{2},-\eta^2 \right) +B\eta M\left(-\frac{\alpha}{2}+\frac{1}{2},\frac{3}{2},-\eta^2 \right)\right],\\
&s(t)=2a\nu \sqrt{t},
\end{align}
where $\eta=\frac{x}{2a\sqrt{t}}$ is the similarity variable, 
\be A=\theta_{_\infty},\qquad \qquad B=\frac{-\theta_{_\infty}M\left( -\frac{\alpha}{2},\frac{1}{2},-{\nu^2}\right)}{\nu M\left(-\frac{\alpha}{2}+\frac{1}{2},\frac{3}{2},-{\nu^2} \right)},
\ee
and $\nu$ is the unique positive solution to the following equation
\be\label{EcNuA}
 \frac{\text{Ste}}{2^{\alpha+1}}f(z)=z^{\alpha+1}, \qquad z>0,
\ee
where  is defined by
\be 
f(z)=\frac{1}{zM\left(\frac{\alpha}{2}+1,\frac{3}{2},z^2 \right)} \label{f}
\ee
 and $M(a,b,z)$ is the Kummer function defined by
\begin{align}
& M(a,b,z)=\sum\limits_{s=0}^{\infty}\frac{(a)_s}{(b)_s s!}z^s ,\qquad\qquad \text{ (b cannot be a  nonpositive integer)} \label{M} 
\end{align}
being $(a)_s$  the Pochhammer symbol:
\begin{equation}
 (a)_s=a(a+1)(a+2)\dots (a+s-1), \quad \quad (a)_0=1 
\end{equation}

\begin{remark}\label{ExactaMenor1}
If $0<\text{Ste}<1$, the unique solution $\nu$ of equation (\ref{EcNuA})  belongs to the interval $ (0,1)$. In fact,  define $H(x)=\tfrac{\text{Ste}}{2^{\alpha+1}}f(z)-z^{\alpha+1}$. On one hand we have 
$H(0)=+\infty$ due to the fact that $M\left(\tfrac{\alpha}{2}+1,\tfrac{3}{2},0\right)=1$. On the other hand, we obtain $H(1)<0$ as $\tfrac{\text{Ste}}{2^{\alpha+1}}<1< M\left(\tfrac{\alpha}{2}+1,\tfrac{3}{2},1 \right)$.
\end{remark}

\subsection{Approximate solutions}

\medskip

We are going to implement the different approximate techniques for the problem (P) and test their accuracy taking advantage of the knowledge of the exact solution.

First of all, we introduce a problem (P$_{1}$) which arises when applying the classical heat balance integral problem to (P). According to the previous section, the \textbf{problem (P$_{1}$)}  consists in finding the free boundary $s_{1}=s_{1}(t)$  and the temperature $T_{1}=T_{1}(x,t)$ in $0<x<s_{1}(t)$ such that conditions (\ref{EcCalor:1F-pos-tempinfty-A}$^\star$), (\ref{FrontFija:1F-pos-tempinfty-A}),(\ref{TempFront:1F-pos-tempinfty-A}), (\ref{CondStefan:1F-pos-tempinfty-A}$^\star$) and (\ref{FrontInicial:1F-pos-tempinfty-A}) are verified.

Provided that $T_{1}$  assumes a quadratic profile in space like (\ref{Perfil}) we get the following result
\begin{teo}\label{TeoP1}
If $0<\text{Ste}<1$, there exists at least one solution to problem  $\mathrm{(P_{1})}$, given by
\begin{eqnarray}
T_{1}(x,t)&=&t^{\alpha/2}\theta_{_\infty} \left[ A_{1}\left(1-\frac{x}{s_{1}(t)}\right) +B_{1} \left(1-\frac{x}{s_{1}(t)}\right)^{2}\right], \label{PerfilT1}\\
s_{1}(t)&=& 2a\nu_{1} \sqrt{t},
\end{eqnarray}
where the constants $A_{1}, B_{1}$ are defined as a function of $\nu_{1}$ by:
\begin{align} 
A_{1}&=\frac{-2\left[ 3\; 2^\alpha \nu_{1}^{\alpha+2}+\mathrm{Ste}\left((-3+(1+\alpha)\nu_{1}^2\right)\right]}{\mathrm{Ste} \left(3+(1+\alpha)\nu_{1}^2\right)},\qquad\label{A1} \\
B_{1}&= \frac{3\left[2^{\alpha+1}\nu_{1}^{\alpha+2}+\mathrm{Ste}\left(-1+(1+\alpha)\nu_{1}^2\right) \right]}{\mathrm{Ste} \left(3+(1+\alpha)\nu_{1}^2\right)},\label{B1}
\end{align}
and the coefficient $\nu_{1}$ is a solution to the following equation
\begin{align}
& z^{2\alpha+4}(-3)\; 2^{2\alpha+1} (\alpha-2)+z^{2\alpha+2}(-9)\; 2^{2\alpha+1} +z^{4+\alpha} \left(-3\right)\; 2^\alpha (\alpha-3)(\alpha+1)\mathrm{Ste}\nonumber\\
&+z^{\alpha+2}\left( -3\right) \; 2^{\alpha+1} (\alpha+7)\mathrm{Ste}+z^{\alpha} 9\; 2^{\alpha} \mathrm{Ste}+z^4 2 (\alpha+1)^2\mathrm{Ste}^2\nonumber\\
&+z^2 (-12)(\alpha+1)\mathrm{Ste}^2+18\mathrm{Ste}^2=0, \qquad z>0.\label{EcNu1}
\end{align}
\end{teo}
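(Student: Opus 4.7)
I would approach the theorem by a direct algebraic reduction of problem (P$_{1}$) under the ansatz $s_{1}(t)=2a\nu_{1}\sqrt{t}$ and the profile (\ref{PerfilT1}). Conditions (\ref{TempFront:1F-pos-tempinfty-A}) and (\ref{FrontInicial:1F-pos-tempinfty-A}) are satisfied automatically by construction, while evaluating (\ref{PerfilT1}) at $x=0$ and comparing with (\ref{FrontFija:1F-pos-tempinfty-A}) yields the first relation $A_{1}+B_{1}=1$.

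Next, I would substitute the profile into the integral condition (\ref{EcCalor:1F-pos-tempinfty-A}$^\star$). The substitution $u=1-x/s_{1}(t)$ gives $\int_{0}^{s_{1}(t)}T_{1}(x,t)\,dx=t^{\alpha/2}\theta_{_\infty}s_{1}(t)\bigl(\tfrac{A_{1}}{2}+\tfrac{B_{1}}{3}\bigr)$, while $\frac{\partial T_{1}}{\partial x}(0,t)=-\theta_{_\infty}t^{\alpha/2}(A_{1}+2B_{1})/s_{1}(t)$ and $s_{1}^{\alpha}(t)\dot{s}_{1}(t)=(2a\nu_{1})^{\alpha}a\nu_{1}\,t^{(\alpha-1)/2}$. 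Every term in (\ref{EcCalor:1F-pos-tempinfty-A}$^\star$) carries the common factor $t^{(\alpha-1)/2}$, so after cancelling it and using the definition of $\mathrm{Ste}$ together with $A_{1}+B_{1}=1$, the condition becomes a linear equation in $B_{1}$. Solving it produces (\ref{B1}), and (\ref{A1}) follows from $A_{1}=1-B_{1}$.

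Third, I would impose the modified Stefan condition (\ref{CondStefan:1F-pos-tempinfty-A}$^\star$). Since $\frac{\partial T_{1}}{\partial x}(s_{1}(t),t)=-A_{1}\theta_{_\infty}t^{\alpha/2}/s_{1}(t)$ and $\frac{\partial^{2} T_{1}}{\partial x^{2}}(s_{1}(t),t)=2B_{1}\theta_{_\infty}t^{\alpha/2}/s_{1}^{2}(t)$, the time dependence again cancels by similarity, leaving the compact identity $\mathrm{Ste}\,A_{1}^{2}=2^{\alpha+1}B_{1}\nu_{1}^{\alpha}$. Substituting the expressions (\ref{A1})--(\ref{B1}) into this identity and clearing the common denominator $\mathrm{Ste}\bigl(3+(1+\alpha)\nu_{1}^{2}\bigr)$ would produce, after expansion and collection of like monomials in $z=\nu_{1}$, precisely equation (\ref{EcNu1}).

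Finally, existence of a positive root of (\ref{EcNu1}) would follow from Bolzano's theorem applied to the left-hand side $G(z)$: clearly $G(0)=18\,\mathrm{Ste}^{2}>0$, and grouping at $z=1$ the $\mathrm{Ste}^{0}$, $\mathrm{Ste}^{1}$ and $\mathrm{Ste}^{2}$ contributions separately gives $G(1)<0$ for every $0<\mathrm{Ste}<1$ and $\alpha\ge 0$, producing a root $\nu_{1}\in(0,1)$. The main obstacle is the bookkeeping in the third step: turning $\mathrm{Ste}\,A_{1}^{2}=2^{\alpha+1}B_{1}\nu_{1}^{\alpha}$ into the eight-term polynomial (\ref{EcNu1}) is routine but bulky, with several mixed monomials involving $\mathrm{Ste}$, $2^{\alpha}$ and powers of $\nu_{1}$ simultaneously, so one must be careful with signs when matching coefficients. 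The existence step then becomes a direct consequence once the explicit form of $G(z)$ is available.
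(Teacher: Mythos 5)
Your reduction is essentially the paper's: the same three relations ($A_{1}+B_{1}=1$ from the Dirichlet condition, the heat-balance relation from (\ref{EcCalor:1F-pos-tempinfty-A}$^\star$), and $\mathrm{Ste}\,A_{1}^{2}=2^{\alpha+1}\nu_{1}^{\alpha}B_{1}$ from (\ref{CondStefan:1F-pos-tempinfty-A}$^\star$), which is exactly the paper's intermediate equation), the same elimination producing (\ref{A1})--(\ref{B1}) and the polynomial (\ref{EcNu1}), and the same Bolzano argument on $(0,1)$ with $G(0)=18\,\mathrm{Ste}^{2}>0$. The only genuinely different point is cosmetic: you posit the ansatz $s_{1}(t)=2a\nu_{1}\sqrt{t}$, whereas the paper derives the $\sqrt{t}$ behaviour from (\ref{CondStefan:1F-pos-tempinfty-A}$^\star$) before naming $\nu_{1}$.

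However, your existence step contains a concrete error as stated: grouping $G(1)$ by powers of $\mathrm{Ste}$ does \emph{not} make each group negative. The three groups are
\begin{align*}
\mathrm{Ste}^{0}:\ -3\cdot 2^{2\alpha+1}(\alpha+1),\qquad
\mathrm{Ste}^{1}:\ -3\cdot 2^{\alpha}(\alpha^{2}+8)\,\mathrm{Ste},\qquad
\mathrm{Ste}^{2}:\ \left[2(\alpha+1)^{2}-12(\alpha+1)+18\right]\mathrm{Ste}^{2}=2(\alpha-2)^{2}\,\mathrm{Ste}^{2},
\end{align*}
and the last one is nonnegative, so the sign of $G(1)$ is not evident groupwise. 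The conclusion is still true and the patch is one line: since $0<\mathrm{Ste}<1$ gives $\mathrm{Ste}^{2}<\mathrm{Ste}$, and since $2^{\alpha}\geq 1$ with $3(\alpha^{2}+8)-2(\alpha-2)^{2}=(\alpha+4)^{2}>0$, the positive group is strictly dominated by the $\mathrm{Ste}^{1}$ group alone, whence $G(1)<0$. Note this is precisely where the hypothesis $0<\mathrm{Ste}<1$ enters, so it cannot be waved through as bookkeeping; the paper sidesteps the issue by grouping $w_{1}(1)$ by powers of $\alpha$ instead, namely $w_{1}(1)=-\alpha^{2}(3\cdot 2^{\alpha}-2\,\mathrm{Ste})\mathrm{Ste}-2\alpha(3\cdot 4^{\alpha}+4\,\mathrm{Ste}^{2})-2(3\cdot 4^{\alpha}+3\cdot 2^{\alpha+2}\mathrm{Ste}-4\,\mathrm{Ste}^{2})$, where each grouped coefficient is checked positive under the same hypothesis.
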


\begin{proof}
First of all we shall notice that if $T_{1}$ adopts the profile (\ref{PerfilT1}), it is clear evident that the condition  (\ref{TempFront:1F-pos-tempinfty-A}) is automatically verified.  From the imposed Dirichlet condition at the fixed boundary  (\ref{FrontFija:1F-pos-tempinfty-A}) we get
\be \label{1}
A_{1}+B_{1}=1
\ee
In addition, we have that
$$\pder[T_{1}]{x}(x,t)=-t^{\alpha/2} \theta_{_\infty}\left[\frac{A_{1}}{s_{1}(t)}+\frac{2B_{1}}{s_{1}(t)}\left(1-\frac{x}{s_1(t)}\right) \right],$$
and
$$
\pder[^2T_{1}]{x^2}(x,t)=t^{\alpha/2}\theta_{_\infty} \frac{2B_{1}}{s_1^2(t)}.
$$
Therefore, from condition (\ref{CondStefan:1F-pos-tempinfty-A}$^\star$) we claim
$$\frac{k}{\gamma s^{\alpha}_{1}(t)} t^{\alpha} \theta_{_\infty}^2 \frac{A_{1}^2}{s_1^2 (t)}=a^2 t^{\alpha/2} \theta_{_\infty}\frac{2B_{1}}{s_1^2(t)}. $$
Then, it follows that
$$s_{1}(t)= \left( \frac{A_{1}^2}{2 B_{1}} \frac{k \theta_{_\infty}}{\gamma a^2}\right)^{1/\alpha} \sqrt{t}.$$
Defining $\nu_{1}$ such that $\nu_{1}=\frac{1}{2a}\left( \frac{A_{1}^2}{2 B_{1}} \frac{k \theta_{_\infty}}{\gamma a^2}\right)^{1/\alpha}$, we deduce that
\be \label{s1Demo}
s_{1}(t)=2a\nu_{1}\sqrt{t}
\ee
where $\nu_{1}$, $A_{1}$ and $B_{1}$  are related as
\be\label{2}
A_{1}^2 =\frac{2^{\alpha+1} \nu_{1}^{\alpha}}{\Ste} B_{1}.
\ee
Condition  (\ref{EcCalor:1F-pos-tempinfty-A}$^\star$) and
\begin{align*}
\frac{\dee}{\dee t} \int\limits_{0}^{s_1(t)} T_1(x,t) \dee x& =\frac{\dee}{\dee t} \int\limits_{0}^{s_1(t)} t^{\alpha/2} \theta_{_\infty} \left[ A_{1}\left(1-\frac{x}{s_{1}(t)}\right) +B_{1}\left(1-\frac{x}{s_{1}(t)}\right)^{2}\right] \dee x  \\
&= \theta_{_\infty}\left(\frac{A_{1}}{2}+\frac{B_{1}}{3} \right) \left(\frac{\alpha}{2}t^{\alpha/2-1}s_{1}(t)+t^{\alpha/2}\dot s_{1}(t) \right),
\end{align*}
gives 
\be 
 \theta_{_\infty}\left(\tfrac{A_{1}}{2}+\tfrac{B_{1}}{3} \right) \left(\tfrac{\alpha}{2}t^{\alpha/2-1}s_{1}(t)+t^{\alpha/2}\dot s_{1}(t) \right)= -a^2 \left[\tfrac{\gamma}{k}s_{1}^{\alpha} (t) \dot s_{1}(t)+t^{\alpha/2}\theta_{_\infty}\tfrac{(A_{1}+2B_{1})}{s_{1}(t)} \right].
\ee
According to (\ref{s1Demo}), it results that
\be \label{3}
A_{1}\left( (\alpha+1)\nu_{1}^2-1 \right) +B_{1} \left( \tfrac{2}{3} (\alpha+1)\nu_{1}^2-2\right)=\tfrac{-2^{\alpha+1}\nu_{1}^{\alpha+2}}{\Ste}.
\ee

Thus, we have obtained three equations (\ref{1}), (\ref{2}) and (\ref{3}) for the unknown coefficients $A_{1}$, $B_{1}$ and $\nu_{1}$.

From (\ref{1}) and (\ref{3}) it is obtained that $A_{1}$ and $B_{1}$ are given as a function of $\nu_{1}$ by (\ref{A1}) and (\ref{B1}), respectively.

Then, equation (\ref{2}) leads to the fact that  $\nu_{1}$ must be  a positive solution to (\ref{EcNu1}).

For the existence of solution to problem (P$_{1}$) it remains to prove that the function $w_{1}=w_{1}(z)$, defined as the left hand side of equation (\ref{EcNu1}), has at least one positive root. This can be easily check by evaluating $w_{1}(0)=18\Ste^2>0$ and
$$w_{1}(1)=-\alpha^2 (3\; 2^{\alpha}-2\Ste)\Ste-2\alpha(3 \; 4^{\alpha}+4\Ste^2)-2(3\;  4^{\alpha}+3\; 2^{\alpha+2}\Ste-4\Ste^2)$$
From the assumption that $0<\Ste<1$, we obtain $3\; 2^{\alpha}-2\Ste>0$, and $$3\;  4^{\alpha}+3\; 2^{\alpha+2}\Ste-4\Ste^2>2^{\alpha+2}\Ste-4\Ste^2=4\Ste(3\; 2^{\alpha}-\Ste)>0.$$
Therefore $w_{1}(1)<0$. Consequently,  we can assure that there exists at least one positive solution to equation (\ref{EcNu1}) in the interval $(0,1)$.
\end{proof}

\begin{remark} The approximated free boundary  $s_{1}$ behaves as  a square root of time just like the exact one $s$, it means that 
$s_{1}(t)= 2a\nu_{1} \sqrt{t}$ while $s(t)= 2a\nu \sqrt{t}$.
\end{remark}

\begin{remark}
After Theorem \ref{TeoP1} follows the question about uniqueness of solution.
We found that there exists different values for $\alpha$ and $0<\text{Ste}<1$ that leads to multiple roots of equation (\ref{EcNu1}), i.e. $w_1(z)=0,\; z>0$ (see Figure \ref{Fig:Nu1NoUnico})

\begin{figure}[h!!]
\begin{center}

\includegraphics[scale=0.2]{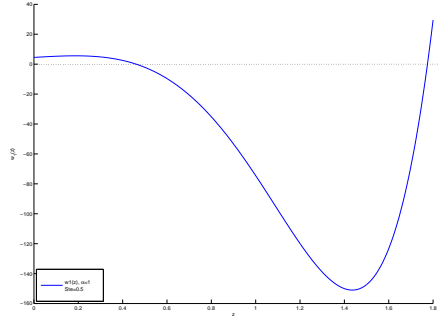}

\end{center}
\caption{{\footnotesize  Plot of $w_1(z)$   for  $\alpha=1$ and $\text{Ste}=0.5$ }}\label{Fig:Nu1NoUnico}

\end{figure}

However our study must be reduced to find  the roots of  $w_1(z)$ located in the interval $(0,1)$ in view of the proof of Theorem \ref{TeoP1} but also in view of Remark \ref{ExactaMenor1}. For the particular case of $\alpha=0$ the uniqueness analysis was given in \cite{BoSeTa2018}.

Although we could not prove it analytically, by setting different values for $\alpha$ and $\Ste$ we can see that there exists just one root of the polynomial $w_1(z)$ located in the interval $(0,1)$. In Figure \ref{Fig:Nu1-Ste05} we illustrate this fact setting $\alpha=0.5, 1,1.5,2,3,5,10$ and $\Ste=0.5$. We have just plot between $0\leq z\leq 0.5$ in order to appreciate better this fact.

\begin{center}
\begin{figure}[h!!]
\begin{center}

\includegraphics[scale=0.3]{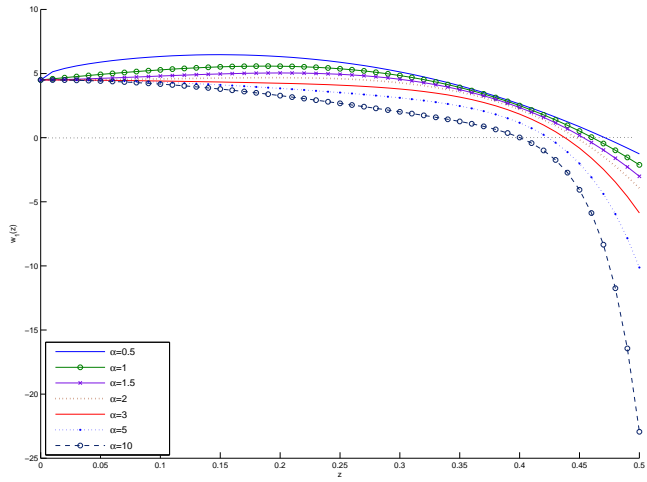}
\end{center}

\caption{{\footnotesize  Plot of $w_1(z)$   for different values of  $\alpha$ setting $\text{Ste}=0.5$ }}\label{Fig:Nu1-Ste05}

\end{figure}
\end{center}

\end{remark}

\medskip

With the purpose of testing the classical integral balance method and in view of the above remark  we will only compare graphically  the coefficient $\nu_{1}$ that characterizes the approximated free boundary problem $s_{1}$ with the coefficient $\nu$ that characterizes the exact free boundary $s$. In Figure  \ref{Fig:Nu1VsNu},  we illustrate this comparisons for different values of $0<\Ste<1$ and  $\alpha$.

\begin{figure}[h!!]
\begin{center}

\includegraphics[scale=0.4]{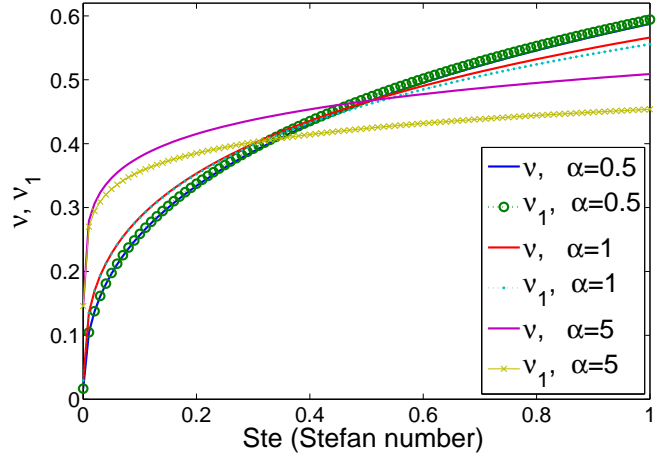}
\end{center}

\begin{center}
\caption{{\footnotesize  Plot of $\nu$ and $\nu_{_1}$ against $\Ste$ for different values of $\alpha=0.5,1,5$ }}\label{Fig:Nu1VsNu}

\end{center}
\end{figure}

For the comparisons we have assumed that $0<\Ste<1$ not only due to the hypothesis in Theorem \ref{TeoP1}, but also because of the fact that in general,  the majority of phase change materials under a realistic temperature present a Stefan number that does not exceed 1 (see \cite{So1979}).

\vspace{1cm}


Now, we will turn to the modified integral balance method. In this case we state an approximated \textbf{problem  (P$_{2}$)} for the problem (P) that is stated as follows: find the free boundary $s_{2}=s_{2}(t)$ and the temperature $T_{2}=T_{2}(x,t)$  in $0<x<s_{2}(t)$ such that equation (\ref{EcCalor:1F-pos-tempinfty-A}$^\star$) and conditions (\ref{FrontFija:1F-pos-tempinfty-A}), (\ref{TempFront:1F-pos-tempinfty-A}), (\ref{CondStefan:1F-pos-tempinfty-A}) and (\ref{FrontInicial:1F-pos-tempinfty-A}) are satisfied.

Assuming a quadratic profile in space for $T_{2}$  we obtain the next theorem

\begin{teo}\label{TeoP2}
The problem $\mathrm{(P_{2})}$ has a unique solution given by:
\begin{eqnarray}
T_{2}(x,t)&=&t^{\alpha/2}\theta_{_\infty} \left[ A_{2}\left(1-\frac{x}{s_{2}(t)}\right) +B_{2} \left(1-\frac{x}{s_{2}(t)}\right)^{2}\right],\label{PerfilT2} \\
s_{2}(t)&=& 2a\nu_{2} \sqrt{t},
\end{eqnarray}
where the constantes $A_{2}$ and $B_{2}$ are given by
\begin{align} 
A_{2}&=\frac{6\mathrm{Ste}-2\mathrm{Ste}\;\nu_{2}^2 (\alpha+1)-3\; 2^{\alpha+1} \nu_{2}^{\alpha+2}}{\mathrm{Ste }\;\left(\nu_{2}^2 (\alpha+1)+3 \right)},\label{A2}\\
B_{2}&=\frac{-3\mathrm{Ste}\;+3\mathrm{Ste}\;\nu_{2}^2(\alpha+1)+3\; 2^{\alpha+1} \nu_{2}^{\alpha+2}}{\mathrm{Ste }\;\left(\nu_{2}^2 (\alpha+1)+3 \right)},\label{B2}
 \end{align}
and where $\nu_2$ is the unique positive solution to the equation
\be \label{EcNu2}
z^{\alpha+4} 2^{\alpha} (\alpha+1)+z^{\alpha+2}3\; 2^{\alpha+1}+z^2 \mathrm{Ste} (\alpha+1)-3\mathrm{Ste}=0, \qquad z>0.
\ee
\end{teo}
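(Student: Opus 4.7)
The plan is to follow the same three-equation strategy used in the proof of Theorem \ref{TeoP1}, adapted to the conditions defining $(\mathrm{P}_2)$. First, I note that the profile (\ref{PerfilT2}) automatically satisfies the phase-change condition (\ref{TempFront:1F-pos-tempinfty-A}) since the two basis functions $1-x/s_2(t)$ and $(1-x/s_2(t))^2$ both vanish at $x=s_2(t)$. Evaluating the Dirichlet condition (\ref{FrontFija:1F-pos-tempinfty-A}) at $x=0$ then yields the linear relation
\begin{equation*}
A_2+B_2=1. \tag{I}
\end{equation*}

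Next, I impose the Stefan condition (\ref{CondStefan:1F-pos-tempinfty-A}) --- this is the condition that distinguishes $(\mathrm{P}_2)$ from $(\mathrm{P}_1)$ and replaces (\ref{CondStefan:1F-pos-tempinfty-A}$^\star$). Computing $\pder[T_2]{x}(s_2(t),t)=-t^{\alpha/2}\theta_{_\infty} A_2/s_2(t)$ and inserting this in (\ref{CondStefan:1F-pos-tempinfty-A}) produces an ODE that, together with $s_2(0)=0$, is compatible with the similarity ansatz $s_2(t)=2a\nu_2\sqrt{t}$; after simplification and use of the definition (\ref{Ste}) of the Stefan number, it reduces to
\begin{equation*}
A_2=\frac{2^{\alpha+1}\nu_2^{\alpha+2}}{\mathrm{Ste}}. \tag{II}
\end{equation*}

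The third relation comes from the integrated heat equation (\ref{EcCalor:1F-pos-tempinfty-A}$^\star$). I compute $\int_0^{s_2(t)} T_2(x,t)\,\dee x = t^{\alpha/2}\theta_{_\infty}(A_2/2+B_2/3)s_2(t)$ and $\pder[T_2]{x}(0,t) = -t^{\alpha/2}\theta_{_\infty}(A_2+2B_2)/s_2(t)$, exactly as in the proof of Theorem \ref{TeoP1}. Substituting into (\ref{EcCalor:1F-pos-tempinfty-A}$^\star$) together with the similarity ansatz yields the same equation (\ref{3}) found there, namely
\begin{equation*}
A_2\bigl((\alpha+1)\nu_2^2-1\bigr)+B_2\bigl(\tfrac{2}{3}(\alpha+1)\nu_2^2-2\bigr)=-\frac{2^{\alpha+1}\nu_2^{\alpha+2}}{\mathrm{Ste}}. \tag{III}
\end{equation*}
Solving (I) and (III) for $A_2$ and $B_2$ as functions of $\nu_2$ gives directly the formulas (\ref{A2}) and (\ref{B2}); inserting (\ref{A2}) into (II), clearing the denominator and collecting terms produces the algebraic equation (\ref{EcNu2}) for $\nu_2$.

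Finally, I have to prove that (\ref{EcNu2}) has a unique positive root (unlike the situation for $\nu_1$, where uniqueness had to be restricted to $(0,1)$). Writing the left-hand side as $h(z)=2^{\alpha}(\alpha+1)z^{\alpha+4}+3\cdot 2^{\alpha+1}z^{\alpha+2}+\mathrm{Ste}(\alpha+1)z^2-3\mathrm{Ste}$, I observe that $h(0)=-3\mathrm{Ste}<0$ and $h(z)\to+\infty$ as $z\to+\infty$, so a positive root exists by continuity. The key point --- and the only place where no computation is involved beyond routine algebra --- is that every coefficient in $h'(z)=2^{\alpha}(\alpha+1)(\alpha+4)z^{\alpha+3}+3\cdot 2^{\alpha+1}(\alpha+2)z^{\alpha+1}+2\mathrm{Ste}(\alpha+1)z$ is strictly positive for $\alpha\ge 0$ and $\mathrm{Ste}>0$, so $h$ is strictly increasing on $(0,\infty)$, which forces uniqueness. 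The main obstacle is purely bookkeeping: keeping the three linear-in-$A_2,B_2$ equations straight so that the elimination produces exactly the stated expressions (\ref{A2})--(\ref{B2}) and (\ref{EcNu2}); once the monotonicity of $h$ is noted, existence and uniqueness are essentially automatic.
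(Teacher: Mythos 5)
Your proposal is correct and follows essentially the same route as the paper's own proof: the same three relations (Dirichlet gives $A_2+B_2=1$, the Stefan condition gives $A_2=2^{\alpha+1}\nu_2^{\alpha+2}/\mathrm{Ste}$ via the similarity ansatz, and the integrated heat equation gives the third linear relation), the same elimination yielding (\ref{A2})--(\ref{B2}) and (\ref{EcNu2}), and the same monotonicity argument ($w_2(0)<0$, $w_2(+\infty)=+\infty$, $w_2'>0$) for existence and uniqueness of the positive root. Your only addition is writing out the derivative explicitly to justify $w_2'>0$, a detail the paper merely asserts.
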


\begin{proof}
Condition (\ref{TempFront:1F-pos-tempinfty-A}) is clearly checked from the chosen temperature profile.

From the Stefan condition (\ref{CondStefan:1F-pos-tempinfty-A})  we obtain
\be
-kt^{\alpha/2}\theta_{_\infty}\tfrac{A_{2}}{s_{2}(t)}=-\gamma s_{2}^{\alpha}(t)\dot s_{2}(t).
\ee
Therefore it results that
\be
s_{2}(t)=\left( \frac{(\alpha+2)}{(\frac{\alpha}{2}+1)} \frac{k\theta_{_\infty}}{\gamma}A_{2}\right)^{1/(\alpha+2)} \sqrt{t}.
\ee
If we introduce the coefficient $\nu_{2}$ such that $ \nu_{2}=\frac{1}{2a}\left( \frac{(\alpha+2)}{(\frac{\alpha}{2}+1)} \frac{k\theta_{_\infty}}{\gamma}A_{2}\right)^{1/(\alpha+2)}$, the free boundary can be expressed as
\be 
s_{2}(t)=2a\; \nu_{2}\sqrt{t},
\ee
where the following relation holds
\be\label{RelP2-10}
A_{2}=\frac{2^{\alpha+1} \nu_{2}^{\alpha+2}}{\Ste} .
\ee
Taking into account the boundary condition at the fixed face (\ref{FrontFija:1F-pos-tempinfty-A}) we get
\be \label{RelP2-20}
A_{2}+B_{2}=1.
\ee
In addition, in virtue of the equation (\ref{EcCalor:1F-pos-tempinfty-A}$^\star$) we get
\be \label{RelP2-30}
A_{2}\left( (\alpha+1)\nu_{2}^2-1 \right) +B_{2} \left( \tfrac{2}{3} (\alpha+1)\nu_{2}^2-2\right)=\tfrac{-2^{\alpha+1}\nu_{2}^{\alpha+2}}{\Ste}.
\ee
From equations (\ref{RelP2-10}), (\ref{RelP2-20}) and (\ref{RelP2-30}) we claim that $A_{2}$ and $B_{2}$ can be written in 
function of  $\nu_{2}$ through formulas (\ref{A2}) and (\ref{B2}), respectively. In addition, $\nu_{2}$ must be a solution to the equation  (\ref{EcNu2}). 
So that, to finish the proof, it remains to show that equation (\ref{EcNu2}) has a unique positive solution, i.e the function defined by the left hand side of this equation $w_{2}=w_{2}(z)$ has a unique positive root. This is easily checked by noting that
$$w_{2}(0)=-3\Ste<0,\qquad w_{2}(+\infty)=+\infty, \qquad \frac{\dee w_{2}}{\dee z}(z)>0,\quad \forall z>0.$$ 

\end{proof}

In the Figure \ref{Fig:Nu2VsNu}, as we did for the classical heat balance integral method, we compare the coefficients $\nu_{2}$ (approximate)  with $\nu$ (exact) for different values of $0<\Ste<1$ and $\alpha$.

\begin{figure}[h!!]
\begin{center}
\includegraphics[scale=0.4]{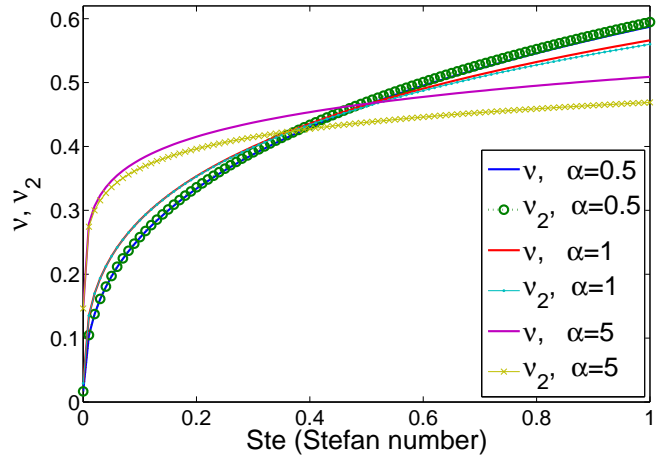}
\end{center}
\caption{{\footnotesize  Plot of $\nu$ and $\nu_{2}$ against $\Ste$ for different values of $\alpha=0.5,1,5$ }}\label{Fig:Nu2VsNu}
\end{figure}

The refined integral method intends to approximate the problem (P) through solving a\textbf{ problem (P$_{3}$)} that consists in finding 
the free boundary $s_{3}=s_{3}(t)$ and the temperature $T_{3}=T_{3}(x,t)$  in $0<x<s_{3}(t)$ such that equation (\ref{EcCalor:1F-pos-tempinfty-A}$^\dag$) and conditions (\ref{FrontFija:1F-pos-tempinfty-A}), (\ref{TempFront:1F-pos-tempinfty-A}), (\ref{CondStefan:1F-pos-tempinfty-A}) and (\ref{FrontInicial:1F-pos-tempinfty-A}) are satisfied.

Under the assumption that $T_{3}$ adopts a quadratic profile in space like (\ref{Perfil}) we can state the following result:

\begin{teo}\label{TeoP3}
The unique solution to problem $\mathrm{(P_{3})}$ is given by
\begin{eqnarray}
T_{3}(x,t)&=&t^{\alpha/2}\left[ A_{3}\theta_{_\infty}\left(1-\frac{x}{s_{3}(t)}\right) +B_{3}\theta_{_\infty} \left(1-\frac{x}{s_{3}(t)}\right)^{2}\right],\label{PerfilT3} \\
s_{3}(t)&=& 2a\nu_{3} \sqrt{t},
\end{eqnarray}
where the constants $A_{3}$ and  $B_{3}$ are given by
\begin{align} 
A_{3}&=\frac{6\mathrm{Ste}-2\mathrm{Ste}\;\nu_{3}^2 (\alpha+1)-3\; 2^{\alpha+1} \nu_{3}^{\alpha+2}}{\mathrm{Ste }\;\left(\nu_{3}^2 (\alpha+1)+3 \right)},\label{A3}\\
B_{3}&=\frac{-3\mathrm{Ste}\;+3\mathrm{Ste}\;\nu_{3}^2(\alpha+1)+3\; 2^{\alpha+1} \nu_{3}^{\alpha+2}}{\mathrm{Ste }\;\left(\nu_{3}^2 (\alpha+1)+3 \right)},\label{B3}
 \end{align}
and where $\nu_{3}$  is the unique solution to equation
\be \label{EcNu3}
z^{\alpha+4} 2^{\alpha+1} \alpha+z^{\alpha+2}3\; 2^{\alpha+2}+z^2 \mathrm{Ste} (2+3\alpha)-6\mathrm{Ste}=0, \qquad z>0.
\ee
\end{teo}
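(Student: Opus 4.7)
The plan is to follow the blueprint of the proof of Theorem~\ref{TeoP2}, adapting it to the refined-integral formulation. First, the quadratic ansatz for $T_3$ satisfies (\ref{TempFront:1F-pos-tempinfty-A}) automatically. Plugging the profile into the Stefan condition (\ref{CondStefan:1F-pos-tempinfty-A}) and balancing the powers of $t$ forces the $\sqrt{t}$-law $s_3(t)=2a\nu_3\sqrt{t}$ together with the algebraic relation $A_3=2^{\alpha+1}\nu_3^{\alpha+2}/\mathrm{Ste}$. The Dirichlet condition (\ref{FrontFija:1F-pos-tempinfty-A}) evaluated at $x=0$ contributes $A_3+B_3=1$, exactly as in Theorem~\ref{TeoP2}.

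The genuinely new ingredient is the double-integral equation (\ref{EcCalor:1F-pos-tempinfty-A}$^{\dag}$). I would expand $T_3$ as a polynomial in $x$ with time-dependent coefficients, differentiate in $t$, and compute $\int_{0}^{s_3(t)}\int_{0}^{x}\partial_t T_3(z,t)\,dz\,dx$ term by term; since $s_3^2(t)=4a^2\nu_3^2 t$, all the resulting $t$-dependence collapses into a common $t^{\alpha/2}$ factor. On the right-hand side, direct evaluation of $T_3(0,t)$ and $\partial_x T_3(0,t)\,s_3(t)$, together with $A_3+B_3=1$, simplifies the expression to $a^2\theta_{_\infty}t^{\alpha/2}B_3$. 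Cancelling the common prefactor yields a third linear relation between $A_3$, $B_3$ and $\nu_3^2$ that plays the role of (\ref{RelP2-30}) did for the modified method. Solving this relation simultaneously with $A_3+B_3=1$ produces the closed-form expressions (\ref{A3})--(\ref{B3}), and substituting them back into the Stefan relation $A_3=2^{\alpha+1}\nu_3^{\alpha+2}/\mathrm{Ste}$ yields the polynomial equation (\ref{EcNu3}) for $\nu_3$.

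For existence and uniqueness of a positive root of (\ref{EcNu3}), denote by $w_3(z)$ its left-hand side. Then $w_3(0)=-6\,\mathrm{Ste}<0$, while $w_3(z)\to+\infty$ as $z\to+\infty$. The derivative $w_3'(z)$ is a sum of non-negative terms in which at least one is strictly positive for every $z>0$ (note that the second and third terms alone already enforce monotonicity in the degenerate case $\alpha=0$), so $w_3$ is strictly increasing on $(0,+\infty)$ and therefore admits a unique positive zero $\nu_3$. Together with the formulas for $A_3$ and $B_3$, this gives the unique solution $(T_3,s_3)$ of problem (P$_3$). The main obstacle is the bookkeeping involved in the explicit evaluation of the double integral in (\ref{EcCalor:1F-pos-tempinfty-A}$^{\dag}$); once that computation is cleanly in hand, the remainder is routine algebra parallel to Theorem~\ref{TeoP2} and a one-line monotonicity argument.
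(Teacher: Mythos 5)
Your route is the same as the paper's: the published proof of Theorem \ref{TeoP3} consists precisely of the remark that one proceeds as in Theorem \ref{TeoP2}, the only new ingredient being that (\ref{EcCalor:1F-pos-tempinfty-A}$^{\dag}$) is equivalent to $\nu_{3}^2\left[A_{3}\left(\tfrac{1}{3}+\tfrac{2}{3}\alpha\right)+B_{3}\left(\tfrac{1}{3}+\tfrac{\alpha}{2}\right)\right]=B_{3}$. Your outline matches this step for step: the Stefan condition forces the $\sqrt{t}$-law and $A_3=2^{\alpha+1}\nu_3^{\alpha+2}/\mathrm{Ste}$, the Dirichlet condition gives $A_3+B_3=1$, the double integral supplies the third linear relation, and uniqueness follows from $w_3(0)=-6\,\mathrm{Ste}<0$, $w_3(+\infty)=+\infty$, $w_3'(z)>0$ for $z>0$. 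Your simplification of the right-hand side of (\ref{EcCalor:1F-pos-tempinfty-A}$^{\dag}$) to $a^2\theta_{_\infty}t^{\alpha/2}B_3$ is correct (indeed $(A_3+B_3)-(A_3+2B_3)=-B_3$, so you do not even need $A_3+B_3=1$ for it), and your remark about monotonicity in the degenerate case $\alpha=0$, where the leading term of $w_3$ vanishes, is exactly right.

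One concrete step of your plan fails as asserted, though through no fault of yours: solving the new third relation together with $A_3+B_3=1$ does \emph{not} produce the printed formulas (\ref{A3})--(\ref{B3}). It produces $A_3=\frac{6-(2+3\alpha)\nu_3^2}{6+\alpha\nu_3^2}$ and $B_3=\frac{2(1+2\alpha)\nu_3^2}{6+\alpha\nu_3^2}$, and it is \emph{these} expressions which, inserted into the Stefan relation $A_3=2^{\alpha+1}\nu_3^{\alpha+2}/\mathrm{Ste}$, yield exactly (\ref{EcNu3}). The printed (\ref{A3})--(\ref{B3}) are verbatim copies of (\ref{A2})--(\ref{B2}); they satisfy the single-integral relation (\ref{RelP2-30}) rather than the double-integral one, and substituting the printed (\ref{A3}) into the Stefan relation returns equation (\ref{EcNu2}), not (\ref{EcNu3}). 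Numerically (take $\alpha=1$, $\mathrm{Ste}=0.5$, so the root of (\ref{EcNu3}) is $\nu_3\approx 0.463$) the printed formula gives $A_3\approx 0.806$ while the Stefan relation forces $A_3\approx 0.793$, confirming the inconsistency; (\ref{A3})--(\ref{B3}) are evidently a copy-paste slip in the statement. So when you carry out the double-integral bookkeeping you deferred --- whose outcome is the displayed relation quoted above --- your derivation lands on the corrected coefficient formulas and on (\ref{EcNu3}); you should record that discrepancy rather than claim the computation recovers (\ref{A3})--(\ref{B3}) as printed.
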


\begin{proof}
The proof is similar to the one of the Theorem \ref{TeoP2}. The only difference to take into account is the fact that equation(\ref{EcCalor:1F-pos-tempinfty-A}$^{\dag}$) is equivalent to
\be
\nu_{3}^2 \left[A_{3}\left( \frac{1}{3}+\frac{2}{3}\alpha\right)+B_{3}\left( \frac{1}{3}+\frac{\alpha}{2}\right) \right]=B_{3} 
\ee

\end{proof}

\medskip
In the Figure \ref{Fig:Nu3VsNu}  we compare graphically the coefficient $\nu_{3}$ that characterizes the approximate free boundary $s_{3}$ with the coefficient  $\nu$ that characterizes the exact boundary $s$.

\begin{figure}[h!!]
\begin{center}
\includegraphics[scale=0.4]{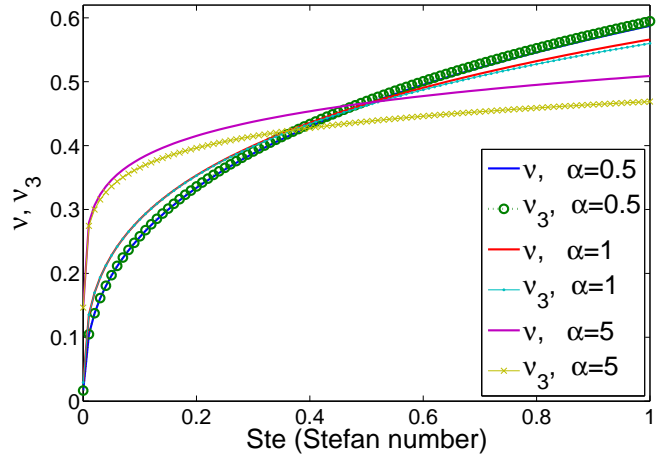}
\end{center}
\caption{{\footnotesize  Plot of $\nu$ and $\nu_{3}$ against $\Ste$ for different values of $\alpha=0.5,1,5$ }}\label{Fig:Nu3VsNu}
\end{figure}

\subsection{Comparisons between the approximate solutions and the exact one}

\medskip
 
In the previous section we have  applied three different methods to approximate the solution to the Stefan problem  (P), with a Dirichlet condition at the fixed face and a variable latent heat.

For each method  we have stated a problem (P$_{{i}}$), ${i}=1,2,3$ and we have compared graphically the dimensionless coefficients $\nu_{{i}}$ that characterizes their free boundaries  $s_{{i}}$, with the coefficient $\nu$  that characterizes the exact free boundary $s$.

Then the goal will be to compare numerically, for different Stefan number, the coefficient $\nu$ given by (\ref{EcNuA}) with the approximate coefficients $\nu_{1}$, $\nu_{2}$  and $\nu_{3}$ defined by (\ref{EcNu1}), (\ref{EcNu2}) and (\ref{EcNu3}), respectively.

In order that the comparisons be more representative, in Tables \ref{Tabla:NuiVsNu1}-\ref{Tabla:NuiVsNu3}  we show the exact values obtained for $\nu$, the approximate value $\nu_{{i}}$ and percentage error committed  in each case $E(\nu_{{i}})=100\left\vert\frac{\nu-\nu_{{i}}}{\nu} \right\vert$, ${i}=1,2,3$ for different values of $\Ste$ and $\alpha$.

\begin{table}
\small
\caption{{\footnotesize Dimensionless coefficients of the free boundaries and their percentage relative error for $\alpha=0$.}}
\label{Tabla:NuiVsNu1}  
\begin{center}
\begin{tabular}{cc|cc|cc|cc}
\hline
Ste      & $\nu$     &   $\nu_1$ & $E_{\text{rel}}(\nu_1)$       &   $ \nu_{2} $  & $E_{\text{rel}}(\nu_{2})$      &   $\nu_{3}$ & $E_{\text{rel}}(\nu_{3})$   \\
\hline
	0.1  &  0.2200  &  0.2232 &   1.4530 \% &   0.2209 &   0.3947 \%  &   0.2218  &  0.7954 \% \\
    0.2  &  0.3064  &  0.3143 &   2.5729 \% &   0.3087 &   0.7499 \%  &   0.3111  &  1.5213 \% \\
    0.3  &  0.3699  &  0.3827 &   3.4575 \% &   0.3738 &   1.0707 \%  &   0.3780  &  2.1856 \% \\
    0.4  &  0.4212  &  0.4388 &   4.1687 \% &   0.4270 &   1.3618 \%  &   0.4330  &  2.7953 \% \\
    0.5  &  0.4648  &  0.4869 &   4.7478 \% &   0.4723 &   1.6266 \%  &   0.4804  &  3.3561 \% \\
    0.6  &  0.5028  &  0.5290 &   5.2236 \% &   0.5122 &   1.8683 \%  &   0.5222  &  3.8729 \% \\
    0.7  &  0.5365  &  0.5666 &   5.6173 \% &   0.5477 &   2.0895 \%  &   0.5599  &  4.3501 \% \\
    0.8  &  0.5669  &  0.6006 &   5.9443 \% &   0.5799 &   2.2923 \%  &   0.5941  &  4.7913 \% \\
    0.9  &  0.5946  &  0.6316 &   6.2165 \% &   0.6094 &   2.4786 \%  &   0.6255  &  5.1999 \% \\
    1.0  &  0.6201  &  0.6600 &   6.4432 \% &   0.6365 &   2.6500 \%  &   0.6547  &  5.5786 \% \\
    \hline
\end{tabular}
\end{center}
\end{table}

\begin{table}
\small
\caption{{\footnotesize Dimensionless coefficients of the free boundaries and their percentage relative error for $\alpha=0.5$.}}
\label{Tabla:NuiVsNu2}  
\begin{center}
\begin{tabular}{cc|cc|cc|cc}
\hline
Ste      & $\nu$     &   $\nu_{_1}$ & $E_{\text{rel}}(\nu_{_1})$       &   $ \nu_{_2} $  & $E_{\text{rel}}(\nu_{_2})$      &   $\nu_{_3}$ & $E_{\text{rel}}(\nu_{_3})$   \\
\hline

    0.1 &   0.2569 &   0.2587  &  0.6956  \% &  0.2574 &   0.2001 \% &   0.2580 &   0.4012 \% \\
    0.2 &   0.3339 &   0.3372  &  0.9999  \% &  0.3349 &   0.3147 \% &   0.3360 &   0.6321 \% \\
    0.3 &   0.3876 &   0.3921  &  1.1718  \% &  0.3891 &   0.3974 \% &   0.3907 &   0.7995 \% \\
    0.4 &   0.4298 &   0.4353  &  1.2678  \% &  0.4318 &   0.4596 \% &   0.4338 &   0.9260 \% \\
    0.5 &   0.4650 &   0.4711  &  1.3143  \% &  0.4674 &   0.5067 \% &   0.4698 &   1.0225 \% \\
    0.6 &   0.4953 &   0.5018  &  1.3264  \% &  0.4980 &   0.5423 \% &   0.5007 &   1.0959 \% \\
    0.7 &   0.5220 &   0.5288  &  1.3133  \% &  0.5249 &   0.5684 \% &   0.5280 &   1.1508 \% \\
    0.8 &   0.5458 &   0.5528  &  1.2814  \% &  0.5491 &   0.5869 \% &   0.5523 &   1.1905 \% \\
    0.9 &   0.5675 &   0.5745  &  1.2352  \% &  0.5709 &   0.5989 \% &   0.5744 &   1.2173 \% \\
    1.0 &   0.5873 &   0.5943  &  1.1777  \% &  0.5909 &   0.6054 \% &   0.5946 &   1.2334 \% \\

    \hline
\end{tabular}
\end{center}
\end{table}

\begin{table}\small
\caption{{\footnotesize Dimensionless coefficients of the free boundaries and their percentage relative error for $\alpha=5$.}}
\label{Tabla:NuiVsNu3}  
\begin{center}
\begin{tabular}{cc|cc|cc|cc}
\hline
Ste      & $\nu$     &   $\nu_{_1}$ & $E_{\text{rel}}(\nu_{_1})$       &   $ \nu_{_2} $  & $E_{\text{rel}}(\nu_{_2})$      &   $\nu_{_3}$ & $E_{\text{rel}}(\nu_{_3})$   \\
\hline
	0.1  &  0.3793  &  0.3563 &   6.0700 \% &    0.3723 &   1.8469 \%  &   0.3656  &  3.6135 \% \\
    0.2  &  0.4151  &  0.3849 &   7.2853 \% &    0.4055 &   2.3333 \%  &   0.3963  &  4.5496 \% \\
    0.3  &  0.4374  &  0.4020 &   8.0816 \% &    0.4256 &   2.6810 \%  &   0.4145  &  5.2154 \% \\
    0.4  &  0.4537  &  0.4143 &   8.6859 \% &    0.4403 &   2.9615 \%  &   0.4276  &  5.7505 \% \\
    0.5  &  0.4667  &  0.4239 &   9.1776 \% &    0.4518 &   3.2010 \%  &   0.4377  &  6.2058 \% \\
    0.6  &  0.4775  &  0.4317 &   9.5943 \% &    0.4612 &   3.4122 \%  &   0.4460  &  6.6060 \% \\
    0.7  &  0.4869  &  0.4384 &   9.9572 \% &    0.4693 &   3.6025 \%  &   0.4529  &  6.9656 \% \\
    0.8  &  0.4950  &  0.4442 &  10.2795 \% &    0.4763 &   3.7766 \%  &   0.4589  &  7.2936 \% \\
    0.9  &  0.5023  &  0.4492 &  10.5699 \% &    0.4826 &   3.9376 \%  &   0.4642  &  7.5962 \% \\
    1.0  &  0.5090  &  0.4538 &  10.8345 \% &    0.4881 &   4.0880 \%  &   0.4689  &  7.8780 \% \\

    \hline
\end{tabular}
\end{center}
\end{table}

\newpage

From the tables, we can notice that for  $\alpha=0.5$, the error committed by each method is lowen than for $\alpha=0$ or $\alpha=5$. In all cases, the method which shows the greatest accuracy is the modified integral balance method. In other words, the best approximate problem to (P) is given by  problem (P$_{_2}$).

Besides, we can also provide an illustration ot the exact temperature $T$  with the approximate temperatures $T_{_{i}}$, ${i}=1,2,3$, given by (\ref{PerfilT1}), (\ref{PerfilT2}) and (\ref{PerfilT3}), respectively. If we consider $\alpha=5$, $\Ste=0.5$, $\theta_{_\infty}=30$ and  $a=1$ we obtain Figures (4)-(7)

\begin{figure}[h!]\centering
   \begin{minipage}{0.49\textwidth}
   \begin{center}
    \includegraphics[scale=0.3]{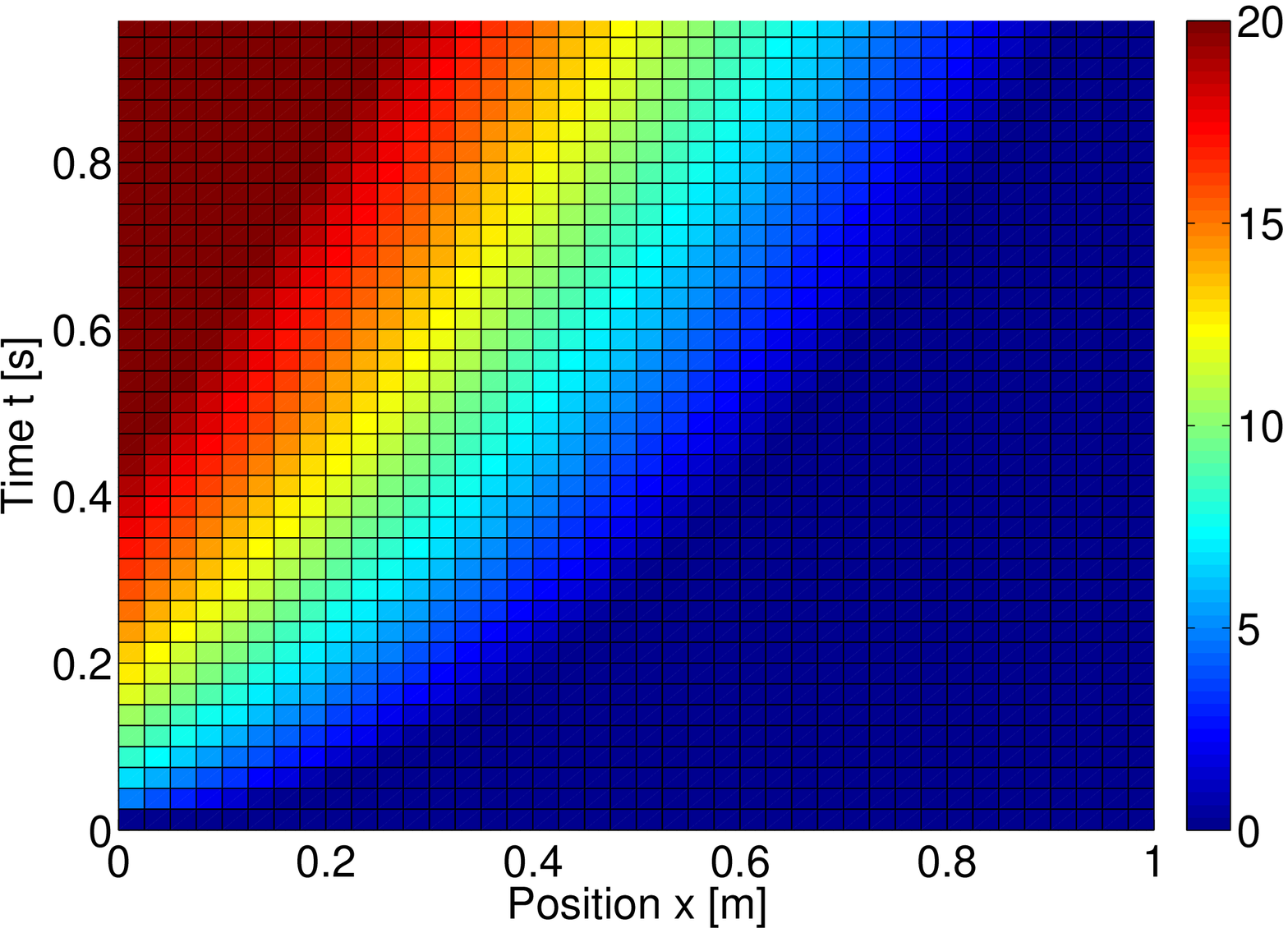}
     \caption{{\footnotesize Colour map for $T$ }}
   \end{center}
\label{Fig:Exacta}
   \end{minipage}
   \begin {minipage}{0.49\textwidth}
\begin{center}
	\includegraphics[scale=0.3]{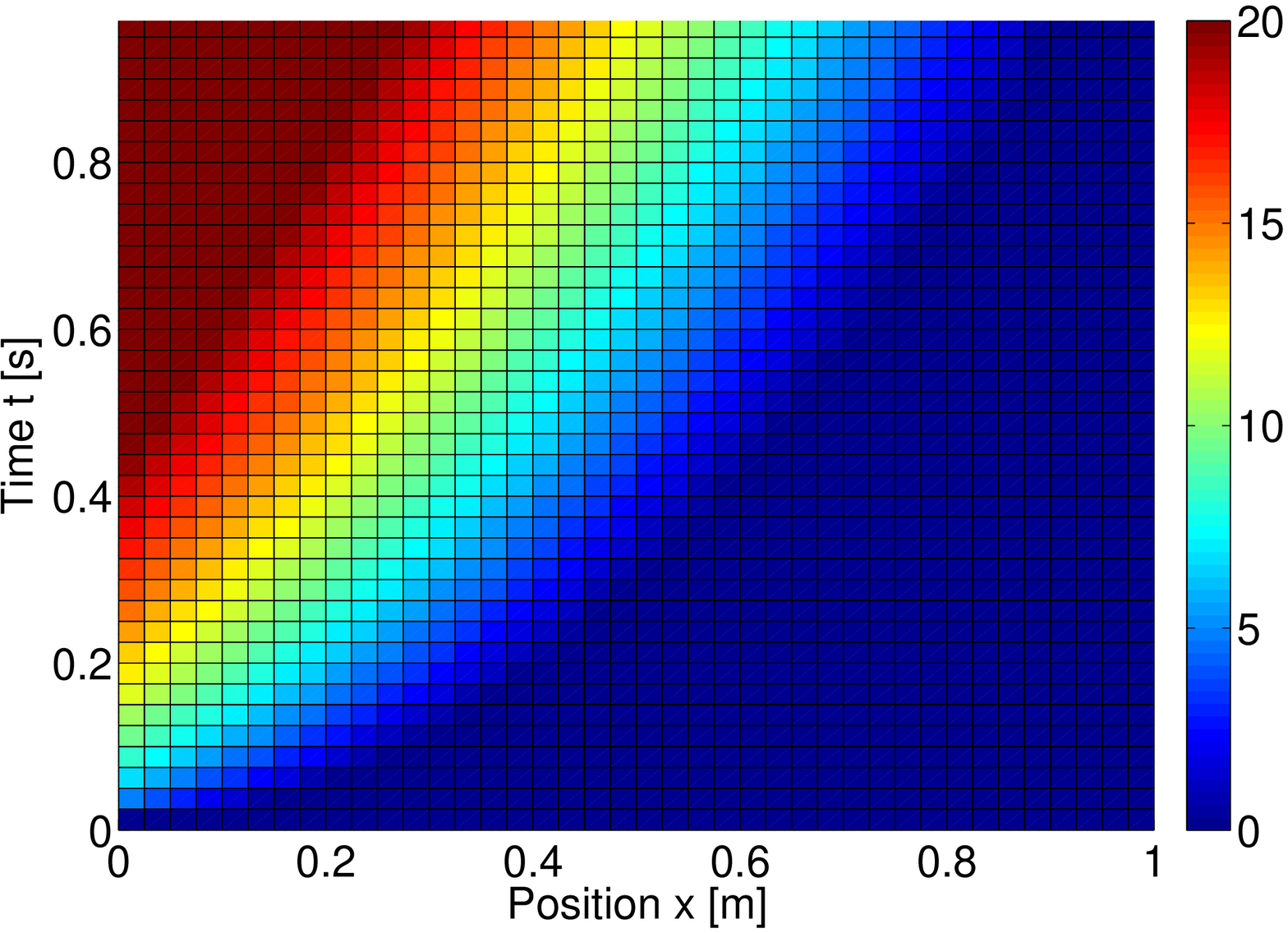}
	  \caption{ {\footnotesize  Colour map for  $T_{_1}$}}
\end{center}
   \label{Fig:BIC}
   \end{minipage}
\end{figure}

\begin{figure}[h!]\centering
   \begin{minipage}{0.49\textwidth}
   \begin{center}
    \includegraphics[scale=0.3]{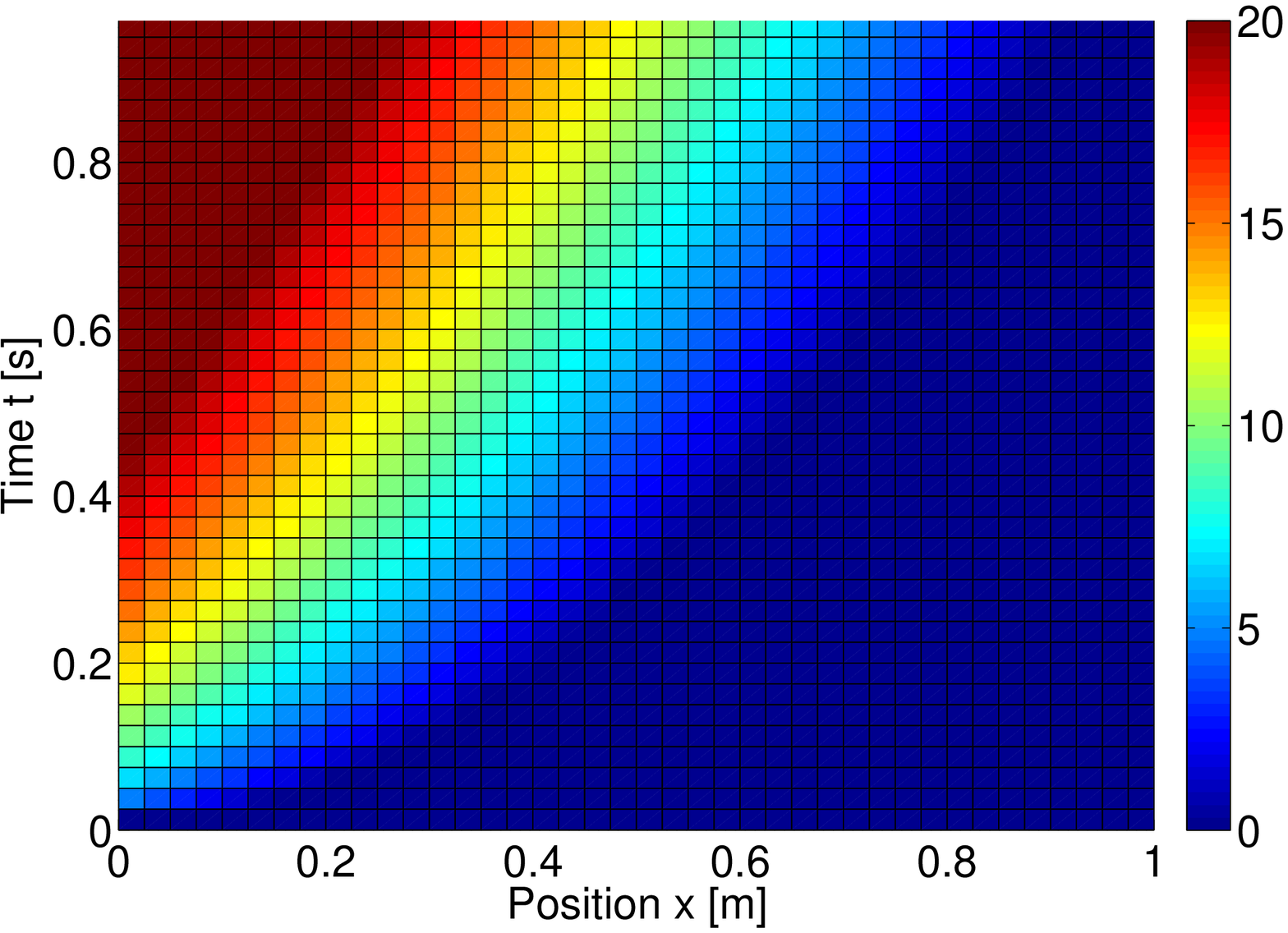}
     \caption{{\footnotesize Colour map for $T_{_2}$ }}
   \end{center}
\label{Fig:BIM}
   \end{minipage}
   \begin {minipage}{0.49\textwidth}
\begin{center}
	\includegraphics[scale=0.3]{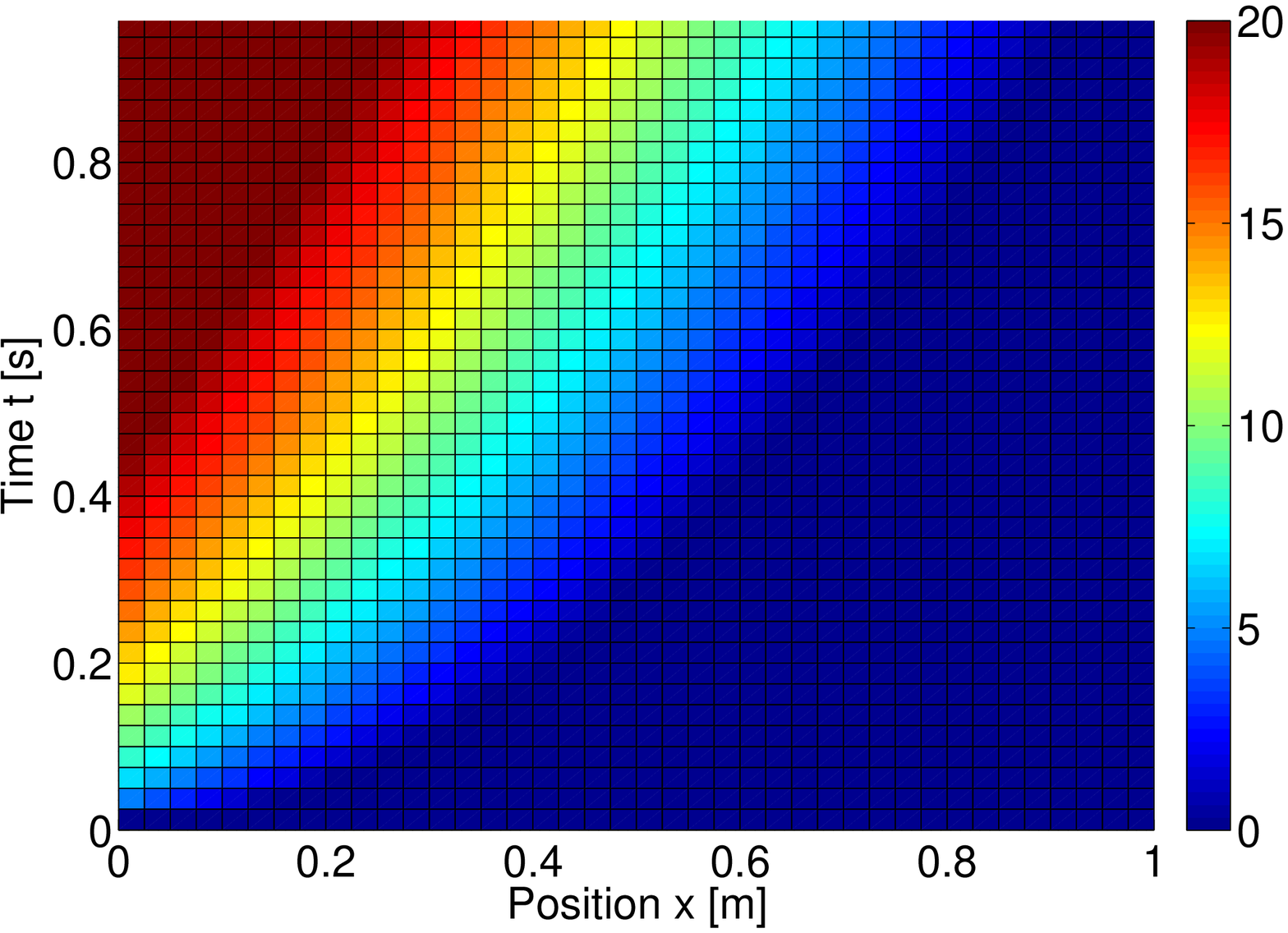}
	  \caption{ {\footnotesize  Colour map for $T_{_3}$}}
\end{center}
   \label{Fig:RIM}
   \end{minipage}
\end{figure}

\section{One-phase Stefan problem with Robin condition}
In this section we are going to present the exact solution of the problem with a Robin condition, then we will obtain different approximate solutions that will be compared and we will analyse their convergence when the coefficient that characterizes the heat transfer at the fixed boundary goes to infinity.

\subsection{Exact solution}

\medskip

We recall that the exact solution to problem (P$_{h}$) governed by equations (\ref{EcCalor:1F-pos-tempinfty-A}), (\ref{FrontFijaConvectiva}), (\ref{TempFront:1F-pos-tempinfty-A})-(\ref{FrontInicial:1F-pos-tempinfty-A}) given in \cite{BoTa2018-CAA} can be written as
\begin{align}
&T_{h}(x,t)=  t^{\alpha/2}\left[ A_{h} M\left(-\frac{\alpha}{2}, \frac{1}{2},-\eta^2\right)+B_{h} \eta M\left(-\frac{\alpha}{2}+\frac{1}{2},\frac{3}{2},-\eta^2\right)\right], \\
&s_{h}(t) =2 a \nu_{h}\sqrt{ t}, 
\end{align}
where $\eta=\frac{x}{2a\sqrt{t}}$ is the similarity variable, the coefficients $A_{h}$ and $B_{h}$ are given by
\begin{align}
& A_{h}=\frac{-\nu_{h} M\left(-\frac{\alpha}{2}+\frac{1}{2},\frac{3}{2},-\nu_{h}^2\right)}{M\left(-\frac{\alpha}{2},\frac{1}{2},-\nu_{h}^2\right)}B_{h}, \\
& B_{h}=\frac{- \theta_{_\infty} M\left( -\frac{\alpha}{2},\frac{1}{2},-\nu_{h}^2\right)}{\left[\frac{1}{2\Bt} M\left( -\frac{\alpha}{2},\frac{1}{2},-\nu_{h}^2\right)+ \nu_{h} M\left( -\frac{\alpha}{2}+\frac{1}{2},\frac{3}{2},-\nu_{h}^2\right) \right]}, 
\end{align}
and with $\nu_{h}$ defined as the unique solution to the following equation
\be \label{EcNuh}
\frac{\Ste}{ 2^{\alpha+1} } \frac{1}{\left[\frac{1}{f(z)}+\frac{1}{2\Bt } M\left(\frac{\alpha}{2}+\frac{1}{2},\frac{1}{2},z^2 \right)\right]} =z^{\alpha+1},\qquad z>0,
\ee
where  $\Ste$ and $f$ are  given by (\ref{Ste}) and  (\ref{f}), respectively and where the Biot number is defined by $\Bt=\frac{ah}{k}$. 

In \cite{BoTa2018-CAA} it was also proved that the unique solution to the exact problem with convective condition (P$_{h}$) converges pointwise to the unique solution to the problem with temperature condition (P) when the Biot number goes to infinity (i.e $h\to \infty$)

\subsection{Approximate solutions and convergence}

\medskip

As it was done for the problem (P), we will now apply the classical integral balance method, the modified integral balance method and the refined integral method to the problem (P$_{h}$). For each method we will going to state an approximate problem (P$_{_{{ih}}}$), $i=1,2,3$. Assuming a quadratic profile in space we are going to obtain the solutions to the approximate problems. Finally, we will show that the solution of each problem (P$_{_{{ih}}}$) converges to the solution of the problem (P$_{i}$) defined in the previous section, when $h\to\infty$. This fact is intuitively expected because the same happens to the exact problems (P$_{h}$) and (P).

We introduce an approximate \textbf{problem (P$_{_{{1h}}}$)} that arises when applying the classical heat balance integral method to the problem (P$_{h}$). It consists in finding the free boundary $s_{_{{1h}}}=s_{_{{1h}}}(t)$ and the temperature $T_{_{{1h}}}=T_{_{{1h}}}(x,t)$  in $0<x<s_{_{{1h}}}$ such that conditions: (\ref{EcCalor:1F-pos-tempinfty-A}$^\star$), (\ref{FrontFijaConvectiva}),(\ref{TempFront:1F-pos-tempinfty-A}), (\ref{CondStefan:1F-pos-tempinfty-A}$^\star$) and (\ref{FrontInicial:1F-pos-tempinfty-A}) are satisfied.

Provided that $T_{_{1h}}$ adopts a quadratic profile in space, like (\ref{Perfil}) we can prove the next result:

\begin{teo}
If $0<\mathrm{Ste}<1$, $\alpha\geq 0$ and $\mathrm{Bi}$ is large enough, there exists at least one solution to problem  $\mathrm{(P_{_{1h}})}$, which is given by
\begin{eqnarray}
T_{_{1h}}(x,t)&=&t^{\alpha/2}\theta_{_\infty} \left[ A_{_{1h}}\left(1-\frac{x}{s_{_{1h}}(t)}\right) +B_{_{1h}} \left(1-\frac{x}{s_{_{1h}}(t)}\right)^{2}\right],\label{PerfilT1h} \\
s_{_{1h}}(t)&=& 2a\nu_{_{1h}} \sqrt{t},
\end{eqnarray}
where the constants $A_{_{1h}}$ and $B_{_{1h}}$ are defined as a function of $\nu_{_{1h}}$
\begin{align} 
A_{_{1h}}&=\frac{6\mathrm{Ste}-2\mathrm{Ste}\; \nu^2_{_{1h}}(\alpha+1)-\frac{3}{\mathrm{Bi}} 2^{\alpha+1} \nu_{_{1h}}^{\alpha+1}-3\; 2^{\alpha+1}\nu_{_{1h}}^{\alpha+2} }{\mathrm{Ste}\left[\nu_{_{1h}}^2 (\alpha+1)+\frac{2}{\mathrm{Bi}} \nu_{_{1h}}(\alpha+1)+3 \right]},\qquad\label{A1h} \\[0.2cm]
B_{_{1h}}&= \frac{-3\mathrm{Ste}+3\mathrm{Ste}\; \nu_{_{1h}}^2(\alpha+1)+\frac{3}{\mathrm{Bi}} 2^{\alpha} \nu_{_{1h}}^{\alpha+1}+3\;2^{\alpha+1} \nu_{_{1h}}^{\alpha+2} }{\mathrm{Ste}\left[\nu_{_{1h}}^2 (\alpha+1)+\frac{2}{\mathrm{Bi}} \nu_{_{1h}}(\alpha+1)+3 \right]},\label{B1h}
\end{align}
where $\nu_{_{1h}}$ is a solution to the following equation
\begin{align}
&z^{2\alpha+4} (-3)2^{2\alpha+1}(\alpha-2)+ z^{2\alpha+3}(-3)\frac{2^{2\alpha}}{\mathrm{Bi}}(5\alpha-7)+z^{2\alpha+2}(-3)2^{2\alpha+1}\left(\frac{\alpha-2}{\mathrm{Bi}^2} +3\right)\nonumber \\
&+z^{2\alpha+1}(-9)\frac{2^{2\alpha}}{\mathrm{Bi}}+z^{\alpha+4} (-3)2^{\alpha}\mathrm{Ste} (\alpha-3)(\alpha+1)+z^{\alpha+3}(-3)\frac{2^{\alpha+1}}{\mathrm{Bi}} \mathrm{Ste} (\alpha-1) (\alpha+1)\nonumber \\
&+z^{\alpha+2} (-3)2^{\alpha+1}\mathrm{Ste} (\alpha+7)+z^{\alpha+1} 3 \frac{2^{\alpha+1}}{\mathrm{Bi}}\mathrm{Ste} (\alpha-5)+z^{\alpha} 9 \;2^\alpha \mathrm{Ste}+z^4 2 \mathrm{Ste}^2 (1+\alpha)^2\nonumber \\
&+ z^2 (-12)\mathrm{Ste}^2 (\alpha+1)+18\mathrm{Ste}^2=0, \qquad z>0. \label{EcNu1h}
\end{align}
\end{teo}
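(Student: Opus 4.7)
The plan is to mirror the proof of Theorem \ref{TeoP1}, adapted to the Robin boundary. First, I observe that a quadratic profile of the form (\ref{PerfilT1h}) automatically satisfies the phase-change condition (\ref{TempFront:1F-pos-tempinfty-A}) and the initial condition $s_{1h}(0)=0$ is taken care of by writing $s_{1h}(t)=2a\nu_{1h}\sqrt{t}$. So it suffices to impose the three remaining conditions: the Robin condition (\ref{FrontFijaConvectiva}), the modified Stefan condition (\ref{CondStefan:1F-pos-tempinfty-A}$^\star$), and the integrated heat equation (\ref{EcCalor:1F-pos-tempinfty-A}$^\star$).

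The second step is to derive the three resulting algebraic relations among $A_{1h}$, $B_{1h}$ and $\nu_{1h}$. The modified Stefan condition, computed exactly as in the proof of Theorem \ref{TeoP1}, reproduces $A_{1h}^{2}=\tfrac{2^{\alpha+1}\nu_{1h}^{\alpha}}{\mathrm{Ste}}B_{1h}$. The integrated heat equation, evaluated on the quadratic profile and using $s_{1h}(t)=2a\nu_{1h}\sqrt{t}$, gives the same linear relation as in (P$_1$), namely $A_{1h}\bigl((\alpha+1)\nu_{1h}^{2}-1\bigr)+B_{1h}\bigl(\tfrac{2}{3}(\alpha+1)\nu_{1h}^{2}-2\bigr)=-\tfrac{2^{\alpha+1}\nu_{1h}^{\alpha+2}}{\mathrm{Ste}}$. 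The new ingredient is the Robin condition: substituting $T_{1h}(0,t)=t^{\alpha/2}\theta_{_\infty}(A_{1h}+B_{1h})$ and $\pder[T_{1h}]{x}(0,t)=-\tfrac{t^{\alpha/2}\theta_{_\infty}}{s_{1h}(t)}(A_{1h}+2B_{1h})$ into (\ref{FrontFijaConvectiva}), the $t^{\alpha/2}$, $\sqrt{t}$ and $\theta_{_\infty}$ factors cancel and the combination $ah/k$ collapses into $\mathrm{Bi}$, yielding
\begin{equation*}
A_{1h}\bigl(1+2\,\mathrm{Bi}\,\nu_{1h}\bigr)+2B_{1h}\bigl(1+\mathrm{Bi}\,\nu_{1h}\bigr)=2\,\mathrm{Bi}\,\nu_{1h},
\end{equation*}
which in the limit $\mathrm{Bi}\to\infty$ recovers $A_{1}+B_{1}=1$.

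The third step is purely algebraic: the Robin relation together with the integrated-heat relation form a $2\times 2$ linear system in $(A_{1h},B_{1h})$ whose determinant is proportional to $\nu_{1h}^{2}(\alpha+1)+\tfrac{2}{\mathrm{Bi}}\nu_{1h}(\alpha+1)+3$, and inverting it produces the closed-form expressions (\ref{A1h})--(\ref{B1h}). Plugging these into the modified Stefan relation $A_{1h}^{2}=\tfrac{2^{\alpha+1}\nu_{1h}^{\alpha}}{\mathrm{Ste}}B_{1h}$ and clearing denominators yields a polynomial identity in $z=\nu_{1h}$ which, after collecting powers of $z$, is exactly (\ref{EcNu1h}).

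The final step, and the main obstacle, is existence of a positive root of the polynomial $w_{1h}(z)$ defined by the left-hand side of (\ref{EcNu1h}). Direct evaluation gives $w_{1h}(0)=18\,\mathrm{Ste}^{2}>0$. At $z=1$, the polynomial decomposes as $w_{1h}(1)=w_{1}(1)+\tfrac{1}{\mathrm{Bi}}R(\alpha,\mathrm{Ste})+\tfrac{1}{\mathrm{Bi}^{2}}S(\alpha,\mathrm{Ste})$, where $w_{1}(1)$ is the quantity already estimated in the proof of Theorem \ref{TeoP1} and shown to be strictly negative for $0<\mathrm{Ste}<1$, while $R$ and $S$ are fixed in $\alpha$ and $\mathrm{Ste}$. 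Taking $\mathrm{Bi}$ large enough so that $|\tfrac{1}{\mathrm{Bi}}R+\tfrac{1}{\mathrm{Bi}^{2}}S|<|w_{1}(1)|$, we obtain $w_{1h}(1)<0$, and the intermediate value theorem furnishes at least one root $\nu_{1h}\in(0,1)$. The delicate point is precisely that the Robin correction may, for small $\mathrm{Bi}$, spoil the sign of $w_{1h}(1)$, which is why the hypothesis ``$\mathrm{Bi}$ large enough'' is imposed rather than attempting a sharp threshold; uniqueness is not claimed, in analogy with the Dirichlet case.
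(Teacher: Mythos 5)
Your proposal is correct and follows essentially the same route as the paper: the same three algebraic relations (the quadratic Stefan relation $A_{_{1h}}^2=\tfrac{2^{\alpha+1}\nu_{_{1h}}^{\alpha}}{\mathrm{Ste}}B_{_{1h}}$, the integrated-heat relation, and the Robin relation $A_{_{1h}}(1+2\,\mathrm{Bi}\,\nu_{_{1h}})+2B_{_{1h}}(1+\mathrm{Bi}\,\nu_{_{1h}})=2\,\mathrm{Bi}\,\nu_{_{1h}}$), the same linear solve for $A_{_{1h}},B_{_{1h}}$ followed by substitution to obtain the polynomial (\ref{EcNu1h}), and the same sign-change argument $w_{_{1h}}(0)=18\,\mathrm{Ste}^2>0$, $w_{_{1h}}(1)<0$ for $\mathrm{Bi}$ large, yielding a root in $(0,1)$. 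Your packaging of the Biot dependence at $z=1$ as a perturbation $w_1(1)+\tfrac{1}{\mathrm{Bi}}R+\tfrac{1}{\mathrm{Bi}^2}S$ with $w_1(1)<0$ from the Dirichlet case is just a mild reorganization of the paper's term-by-term estimate (where only the single term $\tfrac{3}{\mathrm{Bi}}(2^{2\alpha+3}-2^{3+\alpha}\mathrm{Ste})>0$ forces the largeness assumption on $\mathrm{Bi}$), and your right-hand side $-\tfrac{2^{\alpha+1}\nu_{_{1h}}^{\alpha+2}}{\mathrm{Ste}}$ in the integrated-heat relation is in fact the correct one consistent with (\ref{A1h})--(\ref{B1h}).
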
 

\begin{proof}
We shall notice first that the profile chosen (\ref{PerfilT1h}), makes the condition  (\ref{TempFront:1F-pos-tempinfty-A}) to be verified automatically. 
In addition we have
$$\pder[T_{_{1h}}]{x}(x,t)=-t^{\alpha/2} \theta_{_\infty}\left[\frac{A_{_{1h}}}{s_{_{1h}}(t)}+\frac{2B_{_{1h}}}{s_{_{1h}}(t)}\left(1-\frac{x}{s_{1h}(t)}\right) \right],$$
and
$$
\pder[^2T_{_{1h}}]{x^2}(x,t)=t^{\alpha/2}\theta_{_\infty} \frac{2B_{_{1h}}}{s_{_{1h}}^2(t)}.
$$
In virtue of condition (\ref{CondStefan:1F-pos-tempinfty-A}$^\star$), the following equality holds
$$\frac{k}{\gamma s^{\alpha}_{_{1h}}(t)} t^{\alpha} \theta_{_\infty}^2 \frac{A_{_{1h}}^2}{s_{1h}^2 (t)}=a^2 t^{\alpha/2} \theta_{_\infty}\frac{2B_{_{1h}}}{s_{_{1h}}^2(t)}. $$
Consequently
$$s_{_{1h}}(t)= \left( \frac{A_{_{1h}}^2}{2 B_{_{1h}}} \frac{k \theta_{_\infty}}{\gamma a^2}\right)^{1/\alpha} \sqrt{t}.$$
Defining $\nu_{_{1h}}$ such that $\nu_{_{1h}}=\frac{1}{2a}\left( \frac{A_{_{1h}}^2}{2 B_{_{1h}}} \frac{k \theta_{_\infty}}{\gamma a^2}\right)^{1/\alpha}$, we conclude that
\be \label{s1hDemo}
s_{_{1h}}(t)=2a\nu_{_{1h}}\sqrt{t},
\ee
where $\nu_{_{1h}}$ is an unknown that is related with  $A_{_{1h}}$ and $B_{_{1h}}$ in the following way
\be\label{1h}
A_{_{1h}}^2 =\frac{2^{\alpha+1} \nu_{_{1h}}^{\alpha}}{\Ste} B_{_{1h}}.
\ee
Then, condition (\ref{EcCalor:1F-pos-tempinfty-A}$^{\star}$) leads to 
\be\label{2h} 
A_{_{1h}}\left[(\alpha+1)\nu^2_{_{1h}}-1 \right]+B_{_{1h}}\left[\frac{2}{3}(\alpha+1)\nu_{_{1h}}^2-2 \right]=-\frac{2^{\alpha+1}}{\Ste}\nu_{_{1h}}.
\ee
In addition, according to (\ref{FrontFijaConvectiva}) we have
\be\label{3h} 
A_{_{1h}}\left( 1+2\Bt\; \nu_{_{1h}} \right)+2B_{_{1h}}\left(1+\Bt\; \nu_{_{1h}} \right)=2\Bt\; \nu_{_{1h}}.
\ee

Thus, we have obtained three equations (\ref{1h}), (\ref{2h}) and (\ref{3h}), for the three  unknown coefficients $A_{_{1h}}$, $B_{_{1h}}$ and $\nu_{_{1h}}$.

From (\ref{2h}) and (\ref{3h}) we obtain that $A_{_{1h}}$ and $B_{_{1h}}$ are given by (\ref{A1h}) and (\ref{B1h}), respectively.

Then, equation (\ref{1h}) leads to  $\nu_{_{1h}}$ as a positive solution to equation (\ref{EcNu1h}). 
If we denote by $\omega_{_{1h}}=\omega_{_{1h}}(z)$ the left hand side of equation (\ref{EcNu1h}), we have
\be
 \omega_{_{1h}}(0)=18\; \Ste^2>0
\ee
and
\begin{align}
\omega_{_{1h}}(1)&=-\alpha^2\left( 3\; 2^{\alpha}-2\Ste+\tfrac{3}{\Bt}2^{\alpha+1}\right)\Ste-2\alpha\left(3\; 4^{\alpha}+4\Ste^2+\tfrac{21}{\Bt}2^{\alpha-1}-\tfrac{3}{\Bt}2^{\alpha}\Ste \right)\nonumber\\
&-2\left( 3\; 4^{\alpha}+3\; 2^{2+\alpha}\Ste-4\Ste^2\right)+\tfrac{3}{\Bt}\left( 2^{2\alpha+3}-2^{3+\alpha}\Ste\right).
\end{align}
It can be noticed that if $0<\Ste<1$ and $\alpha\geq 0$ we have
\begin{align*}
& 3\; 2^{\alpha}-2\Ste+\frac{3}{\Bt}2^{\alpha+1}>0,\\
&  3\; 4^{\alpha}+3\; 2^{2+\alpha}\Ste-4\Ste^2>0,
\end{align*}
and
$$3\; 4^{\alpha}+4\Ste^2+\frac{21}{\Bt}2^{\alpha-1}-\frac{3}{\Bt}2^{\alpha}\Ste=3\; 4^{\alpha}+4\Ste^2+ \frac{3}{\Bt} 2^{\alpha }\left(\frac{7}{2}-\Ste \right)>0.$$
As $ 2^{2\alpha+3}-2^{3+\alpha}\Ste=2^{\alpha}2^3 (2^\alpha-\Ste)>0$, there exists a large enough Biot number $\Bt$ that makes $\omega_{_{1h}}(1)<0$. In consequence, there will exists at least one solution to equation (\ref{EcNu1h}).

\end{proof}

With the aim of testing the accuracy of the classical heat balance integral method and taking into account that the exact free boundary $s_{h}(t)=2a\nu_{h}\sqrt{t}$ and the approximate one is given by $s_{_{1h}}(t)=2a\nu_{_{1h}}\sqrt{t}$ we are going to compare graphically only the coefficients $\nu_{h}$ with  $\nu_{_{1h}}$ for different values of $\Bt$ and $\alpha$, fixing $\Ste=0.5$ (see Figure \ref{Fig:Nu1hVsNuh}).

\begin{figure}[h!!]
\begin{center}
\includegraphics[scale=0.4]{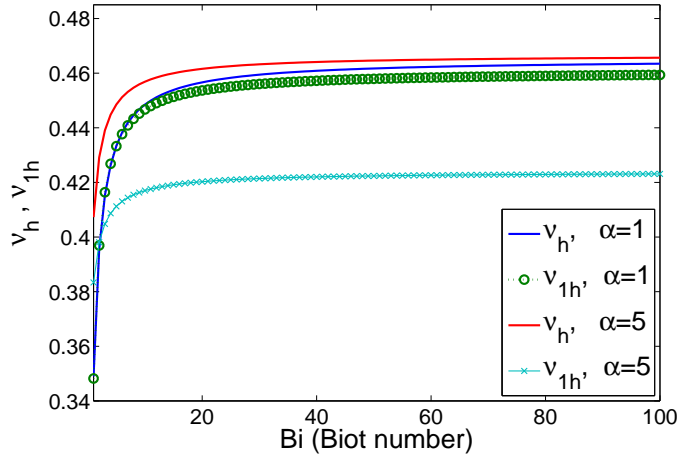}
\end{center}
\caption{{\footnotesize  Plot of $\nu_{h}$ and $\nu_{_{1h}}$ against $\Bt$ for $\alpha=1$ or 5 and $\Ste=0.5$}}
\label{Fig:Nu1hVsNuh}
\end{figure}

The modified integral balance method defines a new approximated problem for (P$_{h}$) that will be called as \textbf{problem (P$_{_{2h}}$)} and which consists in finding the free boundary  $s_{_{2h}}=s_{_{2h}}(t)$  and the temperature $T_{_{2h}}=T_{_{2h}}(x,t)$ in  $0<x<s_{_{2h}}(t)$ such that equations (\ref{EcCalor:1F-pos-tempinfty-A}$^\star$), (\ref{FrontFijaConvectiva}),(\ref{TempFront:1F-pos-tempinfty-A})-(\ref{FrontInicial:1F-pos-tempinfty-A}) are satisfied.

Once again assuming a quadratic profile in space as (\ref{Perfil}) for the temperature $T_{_{2h}}$ we can state the following results

\begin{teo}\label{TeoP2h}
Given $\mathrm{Ste}>0$ and $\alpha\geq 0$, there exists a unique solution to the problem $\mathrm{(P_{_2})}$  which is given by
\begin{eqnarray}
T_{_{2h}}(x,t)&=&t^{\alpha/2}\left[ A_{_{2h}}\theta_{_\infty}\left(1-\frac{x}{s_{_{2h}}(t)}\right) +B_{_{2h}}\theta_{_\infty} \left(1-\frac{x}{s_{_{2h}}(t)}\right)^{2}\right],\label{PerfilT2h} \\
s_{_{2h}}(t)&=& 2a\nu_{_{2h}} \sqrt{t},
\end{eqnarray}
where the constants $A_{_{2h}}$ and  $B_{_{2h}}$ are given by
\begin{align} 
A_{_{2h}}&=\frac{6\mathrm{Ste}-2\mathrm{Ste}\; \nu^2_{_{2h}}(\alpha+1)-\frac{3}{\mathrm{Bi}} 2^{\alpha+1} \nu_{_{2h}}^{\alpha+1}-3\; 2^{\alpha+1}\nu_{_{2h}}^{\alpha+2} }{\mathrm{Ste}\left[\nu_{_{2h}}^2 (\alpha+1)+\frac{2}{\mathrm{Bi}} \nu_{_{2h}}(\alpha+1)+3 \right]},\qquad\label{A2h} \\[0.2cm]
B_{_{2h}}&= \frac{-3\mathrm{Ste}+3\mathrm{Ste}\; \nu_{_{2h}}^2(\alpha+1)+\frac{3}{\mathrm{Bi}} 2^{\alpha} \nu_{_{2h}}^{\alpha+1}+3\;2^{\alpha+1} \nu_{_{2h}}^{\alpha+2} }{\mathrm{Ste}\left[\nu_{_{2h}}^2 (\alpha+1)+\frac{2}{\mathrm{Bi}} \nu_{_{2h}}(\alpha+1)+3 \right]},\label{B2h}
\end{align}
and where the coefficient $\nu_{_{2h}}$ is the unique solution to the following equation
\begin{align}
&z^{\alpha+4} 2^{\alpha} (\alpha+1)+z^{\alpha+3}\; \tfrac{2^{\alpha+1}}{\mathrm{Bi}}(\alpha+1)+z^{\alpha+2}3\; 2^{\alpha+1}\nonumber\\
&+z^{\alpha+1}3\tfrac{2^{\alpha}}{\mathrm{Bi}}+z^2 \mathrm{Ste} (\alpha+1)-3\mathrm{Ste}=0, \qquad z>0. \label{EcNu2h}
\end{align}
\end{teo}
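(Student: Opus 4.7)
The plan is to follow the blueprint already established for Theorem \ref{TeoP2} (problem (P$_2$)) and for the preceding result on (P$_{1h}$), since (P$_{2h}$) differs from (P$_2$) only by the replacement of the Dirichlet condition with the Robin condition (\ref{FrontFijaConvectiva}), and from (P$_{1h}$) by retaining the original Stefan condition (\ref{CondStefan:1F-pos-tempinfty-A}) rather than its derivative form (\ref{CondStefan:1F-pos-tempinfty-A}$^\star$). Thus I expect two of the three algebraic relations to coincide with those in the proof of Theorem \ref{TeoP2}, and only the Robin-derived relation to be genuinely new.

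First I would observe that the quadratic ansatz (\ref{PerfilT2h}) automatically verifies (\ref{TempFront:1F-pos-tempinfty-A}) at $x=s_{_{2h}}(t)$. Substituting the profile into the Stefan condition (\ref{CondStefan:1F-pos-tempinfty-A}) and matching powers of $t$ forces the similarity form $s_{_{2h}}(t) = 2a\nu_{_{2h}}\sqrt{t}$ together with the relation
\[ A_{_{2h}} = \frac{2^{\alpha+1}\nu_{_{2h}}^{\alpha+2}}{\Ste}, \]
identical to (\ref{RelP2-10}). Plugging the profile into (\ref{EcCalor:1F-pos-tempinfty-A}$^\star$) then reproduces verbatim the relation (\ref{RelP2-30}),
\[ A_{_{2h}}\bigl((\alpha+1)\nu_{_{2h}}^2 - 1\bigr) + B_{_{2h}}\Bigl(\tfrac{2}{3}(\alpha+1)\nu_{_{2h}}^2 - 2\Bigr) = -\frac{2^{\alpha+1}\nu_{_{2h}}^{\alpha+2}}{\Ste}. \]

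Next I would impose the Robin condition (\ref{FrontFijaConvectiva}). Computing $T_{_{2h}}(0,t) = t^{\alpha/2}\theta_{_\infty}(A_{_{2h}}+B_{_{2h}})$ and $\pder[T_{_{2h}}]{x}(0,t) = -t^{\alpha/2}\theta_{_\infty}(A_{_{2h}}+2B_{_{2h}})/s_{_{2h}}(t)$, then simplifying with $s_{_{2h}}(t)=2a\nu_{_{2h}}\sqrt{t}$ and the definition $\Bt=ah/k$, yields the linear relation analogous to (\ref{3h}),
\[ A_{_{2h}}(1+2\Bt\,\nu_{_{2h}}) + 2B_{_{2h}}(1+\Bt\,\nu_{_{2h}}) = 2\Bt\,\nu_{_{2h}}. \]
Solving the linear $2\times 2$ system in $A_{_{2h}},B_{_{2h}}$ given by this relation and the one coming from (\ref{EcCalor:1F-pos-tempinfty-A}$^\star$) produces formulas (\ref{A2h})--(\ref{B2h}); substituting these expressions into the Stefan-derived relation and clearing denominators yields the algebraic equation (\ref{EcNu2h}) for $\nu_{_{2h}}$.

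The only remaining point is existence and uniqueness of a positive root of (\ref{EcNu2h}). Denote by $w_{_{2h}}(z)$ its left-hand side. Then $w_{_{2h}}(0) = -3\Ste < 0$ and $w_{_{2h}}(+\infty) = +\infty$, so continuity forces at least one positive root. For uniqueness, note that every monomial of $w_{_{2h}}(z)$ other than the constant carries a strictly positive coefficient (for all $\alpha\ge 0$ and $\Ste>0$) and a strictly positive power of $z$, so $w_{_{2h}}'(z)>0$ on $(0,\infty)$ and the positive root is unique. I do not anticipate any serious obstacle here: unlike Theorem \ref{TeoP1}, where the polynomial for $\nu_1$ had degree high enough to admit multiple positive roots and we had to restrict attention to the interval $(0,1)$, the structure of $w_{_{2h}}$ makes the monotonicity argument immediate, which is precisely why the statement can assert \emph{uniqueness} for all $\Ste>0$ and $\alpha\ge 0$ without extra smallness assumptions on $\Ste$ or largeness assumptions on $\Bt$.
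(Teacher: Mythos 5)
Your proposal is correct and follows essentially the same route as the paper's proof: the identical three relations $A_{_{2h}}=2^{\alpha+1}\nu_{_{2h}}^{\alpha+2}/\mathrm{Ste}$ from the Stefan condition, the Robin-derived relation $A_{_{2h}}(1+2\mathrm{Bi}\,\nu_{_{2h}})+2B_{_{2h}}(1+\mathrm{Bi}\,\nu_{_{2h}})=2\mathrm{Bi}\,\nu_{_{2h}}$, and the heat-balance relation (\ref{RelP2-30}), followed by elimination to obtain (\ref{A2h})--(\ref{B2h}) and (\ref{EcNu2h}). Your uniqueness argument ($w_{_{2h}}(0)=-3\mathrm{Ste}<0$, $w_{_{2h}}(+\infty)=+\infty$, $w_{_{2h}}'(z)>0$ for $z>0$ since every nonconstant monomial has a positive coefficient) is exactly the paper's, with your remark on why no smallness of $\mathrm{Ste}$ or largeness of $\mathrm{Bi}$ is needed being a correct observation the paper leaves implicit.
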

\begin{proof}
It is clear immediate that the chosen profile temperature leads the condition (\ref{TempFront:1F-pos-tempinfty-A}) to be automatically verified.
From condition (\ref{CondStefan:1F-pos-tempinfty-A})  we obtain
\be
-kt^{\alpha/2}\theta_{_\infty}\frac{A_{_{2h}}}{s_{_{2h}}(t)}=-\gamma s_{_{2h}}^{\alpha}(t)\dot s_{_{2h}}(t).
\ee
Therefore
\be
s_{_{2h}}(t)=\left( \frac{(\alpha+2)}{(\frac{\alpha}{2}+1)} \frac{k\theta_{_\infty}}{\gamma}A_{_{2h}}\right)^{1/(\alpha+2)} \sqrt{t}.
\ee
Introducing the new coefficient $\nu_{_{2h}}$ such that $ \nu_{_{2h}}=\frac{1}{2a}\left( \frac{(\alpha+2)}{(\frac{\alpha}{2}+1)} \frac{k\theta_{_\infty}}{\gamma}A_{_{2h}}\right)^{1/(\alpha+2)}$, the free boundary can be expressed as
\be 
s_{_{2h}}(t)=2a\; \nu_{_{2h}}\sqrt{t},
\ee
where the following equality holds
\be\label{Ec1:P2h}
A_{_{2h}}=\frac{2^{\alpha+1} \nu_{_{2h}}^{\alpha+2}}{\Ste} .
\ee
The  convective boundary condition at $x=0$, i.e. condition (\ref{FrontFijaConvectiva}), leads to
\be \label{Ec2:P2h}
A_{_{2h}}(1+2\Bt\; \nu_{_{2h}})+2B_{_{2h}}(1+\Bt\;\nu_{_{2h}})=2\Bt\;\nu_{_{2h}}.
\ee
In addition, from (\ref{EcCalor:1F-pos-tempinfty-A}$^{\star}$) it results that
\be \label{Ec3_P2h}
A_{_{2h}}\left( (\alpha+1)\nu_{_{2h}}^2-1 \right) +B_{_{2h}} \left( \tfrac{2}{3} (\alpha+1)\nu_{_{2h}}^2-2\right)=\tfrac{-2^{\alpha+1}\nu_{_{2h}}^{\alpha+2}}{\Ste}.
\ee
Taking into account equations (\ref{Ec1:P2h})-(\ref{Ec3_P2h}) we obtain that  $A_{_{2h}}$ y $B_{_{2h}}$ can be given as functions of $\nu_{_{2h}}$ through formulas (\ref{A2h}) and (\ref{B2h}), respectively. Moreover, we get that $\nu_{_{2h}}$ must be a solution to equation (\ref{EcNu2h}). To finish the proof   it remains to show that the equation  (\ref{EcNu2h}) has a unique positive solution. If we define the function  $w_{_{2h}}=w_{_{2h}}(z)$ as the  left hand side of equation (\ref{EcNu2h}) we have that
$$w_{_{2h}}(0)=-3\Ste<0,\qquad w_{_{2h}}(+\infty)=+\infty, \qquad \frac{\dee w_{_{2h}}}{\dee z}(z)>0,\quad \forall z>0.$$ 
So we conclude that $w_{_{2h}}$ has a unique positive root.
\end{proof}

In what follows, we will show that the unique solution to the problem  (P$_{_{2h}}$) converges to the unique solution to the problem (P$_{_2}$) when $h\to \infty$.

\begin{teo}\label{ConvergenciaP2h}
The solution to problem (P$_{_{2h}}$) given in Theorem \ref{TeoP2h} converges to the solution to problem (P$_{_2}$) given by Theorem \ref{TeoP2} when the coefficient $h$,  that characterizes the heat transfer in the fixed boundary, goes to infinity
\end{teo}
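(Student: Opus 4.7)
The plan is to reduce everything to the convergence of the scalar root $\nu_{_{2h}}\to\nu_{_{2}}$ as $\mathrm{Bi}\to\infty$, since the temperature profile and the free boundary are explicit continuous functions of this single parameter (together with $1/\mathrm{Bi}$).

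First I would rewrite the left-hand side of equation~(\ref{EcNu2h}), call it $w_{_{2h}}(z)$, in the form
\begin{equation*}
w_{_{2h}}(z)=w_{_{2}}(z)+\frac{1}{\mathrm{Bi}}\,g(z),
\qquad
g(z)=z^{\alpha+3}\,2^{\alpha+1}(\alpha+1)+z^{\alpha+1}\,3\cdot 2^{\alpha},
\end{equation*}
where $w_{_{2}}(z)$ is the left-hand side of~(\ref{EcNu2}). Since $g$ is a continuous nonnegative function on $[0,\infty)$, the family $\{w_{_{2h}}\}$ converges to $w_{_{2}}$ uniformly on every compact subinterval of $[0,\infty)$ as $\mathrm{Bi}\to\infty$.

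Next I would show that $\nu_{_{2h}}$ is bounded as $\mathrm{Bi}\to\infty$. Since both $w_{_{2h}}$ and $w_{_{2}}$ are strictly increasing (the derivative argument already used in the proofs of Theorems~\ref{TeoP2} and \ref{TeoP2h} applies term by term), one can fix any $R>\nu_{_{2}}$ with $w_{_{2}}(R)>0$ and exploit $w_{_{2h}}(R)\ge w_{_{2}}(R)>0$ to deduce $\nu_{_{2h}}<R$ for every $\mathrm{Bi}$. Similarly, choosing $r<\nu_{_{2}}$ with $w_{_{2}}(r)<0$ and using uniform convergence gives $w_{_{2h}}(r)<0$ for $\mathrm{Bi}$ large enough, so $\nu_{_{2h}}>r$. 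Letting $r\uparrow \nu_{_{2}}$ and $R\downarrow\nu_{_{2}}$ yields $\nu_{_{2h}}\to\nu_{_{2}}$. Equivalently, any convergent subsequence has a limit that satisfies $w_{_{2}}=0$, and uniqueness of the positive root of $w_{_{2}}$ forces the whole family to converge.

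From the convergence $\nu_{_{2h}}\to\nu_{_{2}}$ and $1/\mathrm{Bi}\to 0$, the explicit rational expressions~(\ref{A2h})--(\ref{B2h}) show, by mere continuity, that $A_{_{2h}}\to A_{_{2}}$ and $B_{_{2h}}\to B_{_{2}}$ given by~(\ref{A2})--(\ref{B2}). Consequently $s_{_{2h}}(t)=2a\nu_{_{2h}}\sqrt{t}\to 2a\nu_{_{2}}\sqrt{t}=s_{_{2}}(t)$ uniformly on compact time intervals, and the quadratic profile~(\ref{PerfilT2h}) converges to~(\ref{PerfilT2}) pointwise in $(x,t)$ (and uniformly on compact subsets of $\{0<x<s_{_{2}}(t)\}$ once $\mathrm{Bi}$ is large enough so that the inclusion $\{0<x<s_{_{2}}(t)\}\subset\{0<x<s_{_{2h}}(t)\}$ holds on the relevant region).

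The only delicate step is the convergence $\nu_{_{2h}}\to\nu_{_{2}}$: once this is established, everything else is continuity. The strict monotonicity of $w_{_{2h}}$ in $z$ (already available from Theorem~\ref{TeoP2h}) together with the locally uniform convergence $w_{_{2h}}\to w_{_{2}}$ is exactly what makes a standard sandwich argument work, so I expect no real obstacle.
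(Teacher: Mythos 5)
Your proposal is correct and follows essentially the same route as the paper: the identical decomposition $w_{_{2h}}(z)=w_{_{2}}(z)+\frac{1}{\mathrm{Bi}}g(z)$ with $g(z)=z^{\alpha+3}2^{\alpha+1}(\alpha+1)+z^{\alpha+1}3\cdot 2^{\alpha}>0$, the strict monotonicity of $w_{_{2h}}$ in $z$, convergence of the root $\nu_{_{2h}}\to\nu_{_{2}}$, and then continuity of the explicit formulas for $A_{_{2h}}$, $B_{_{2h}}$, $s_{_{2h}}$, $T_{_{2h}}$; the paper merely organizes the root convergence as a monotone-in-$h$ family bounded above by $\nu_{_{2}}$ instead of your sandwich, an immaterial difference. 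One small slip: since $w_{_{2h}}(z)>w_{_{2}}(z)$ pointwise, one has $\nu_{_{2h}}<\nu_{_{2}}$ and hence $s_{_{2h}}(t)<s_{_{2}}(t)$ for every finite $\mathrm{Bi}$, so your parenthetical inclusion $\{0<x<s_{_{2}}(t)\}\subset\{0<x<s_{_{2h}}(t)\}$ never holds; this is harmless, because the quadratic profile (\ref{PerfilT2h}) is defined by a formula valid for all $x$, so pointwise convergence on $\{0<x<s_{_{2}}(t)\}$ (and uniform convergence on its compact subsets, each of which lies in $\{0<x<s_{_{2h}}(t)\}$ for $\mathrm{Bi}$ large) follows directly from $\nu_{_{2h}}\to\nu_{_{2}}$.
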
 
\begin{proof}
The free boundary of the problem (P$_{_{2h}}$) is characterized by a dimensionless coefficient $\nu_{_{2h}}$ which is the unique positive root of the function $\omega_{_{2h}}=\omega_{_{2h}}(z)$ defined as the left hand side of equation (\ref{EcNu2h}).  
On one hand, we can notice that if $h_1<h_2$ then  $\omega_{\mathrm{2h_1}}(z)>\omega_{\mathrm{2h_2}}(z)$ and consequently their unique positive root verify $\nu_{_\mathrm{2h_1}}<\nu_{_\mathrm{2h_2}}$.\\
On the other hand, if we define $\omega_{_2}=\omega_{_2}(z)$ as the left hand side of equation (\ref{EcNu2}), we get
$$\omega_{_{2h}}(z)-\omega_{_2}(z)= z^{\alpha+3}\frac{2^{\alpha+1}}{\mathrm{Bi}}(\alpha+1)+z^{\alpha+1}3\frac{2^{\alpha}}{\mathrm{Bi}}>0, \quad \forall z>0.$$
Therefore $\lbrace \nu_{_h}\rbrace_{_h}$ is increasing  and bounded from above by $\nu.$

In addition, it is easily seen that when $h\to\infty$, or equivalently when $\Bt\to\infty$, we obtain $\omega_{2h}\to\omega_2$ and so $\nu_{_{2h}}\to\nu_{_2}$.
Therefore it is obtained that  $s_{_{2h}}(t)\to s_{_2}(t)$, for every $t>0$. Showing that $A_{_{2h}}\to A_{_2}$ and $B_{_{2h}}\to B_{_2}$  we get $T_{_{2h}}(x,t)\to T_{_2}(x,t)$ when $h\to\infty$ for every $t>0$ and $0<x<s_{_2}(t)$.
\end{proof}

In Figure \ref{Fig:Nu2hVsNuh}  we compare graphically, for different values of $\Bt>1$, the coefficient $\nu_{_{2h}}$ that characterizes the free boundary $s_{_{2h}}$ with the coefficient  $\nu_{h}$ that characterizes the exact free boundary  $s_{h}$, for different values of  $\alpha$, fixing $\Ste=0.5$. We shall notice that when the Biot number increases then the value of $\nu_{_{2h}}$ gets closer to the value of $\nu_{_2}$.

\begin{figure}[h!!]
\begin{center}
\includegraphics[scale=0.4]{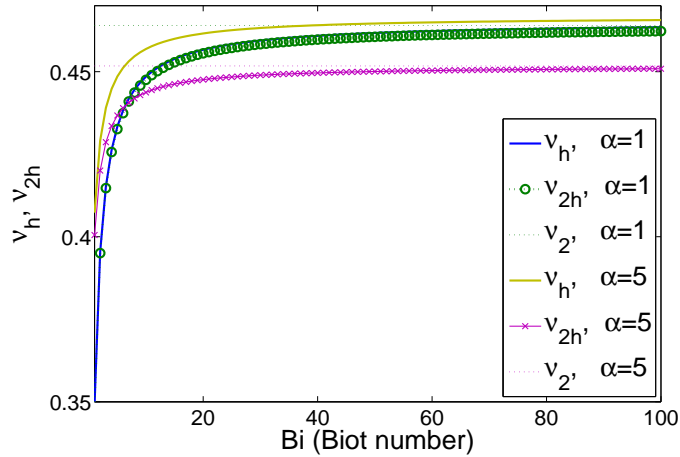}
\end{center}
\caption{{\footnotesize  Plot of $\nu_{h}$ and $\nu_{_{2h}}$ against $\Bt$ for $\alpha=1$ or 5 and $\Ste=0.5$}}\label{Fig:Nu2hVsNuh}
\end{figure}

\newpage

Lastly we will turn to the refined integral method applied to problem (P$_{h}$). We define a new approximate\textbf{ problem (P$_{_{3h}}$) }which consists in finding the free boundary $s_{_{3h}}=s_{_{3h}}(t)$ and the temperature $T_{_{3h}}=T_{_{3h}}(x,t)$ in $0<x<s_{_{3h}}(t)$  such that equations (\ref{EcCalor:1F-pos-tempinfty-A}$^\dag$), (\ref{FrontFijaConvectiva}),(\ref{TempFront:1F-pos-tempinfty-A})-(\ref{FrontInicial:1F-pos-tempinfty-A}) are verified.

Provided that $T_{_{3h}}$ adopts a profile like (\ref{Perfil}) we state the following theorem

\begin{teo}\label{TeoP3h}
Let $0<\mathrm{Ste}<1$, $\alpha\geq 0$ and $\mathrm{Bi}\geq 0$, then there exists a unique solution to problem $\mathrm{(P_{_{3h}})}$ which is given by
\begin{eqnarray}
T_{_{3h}}(x,t)&=&t^{\alpha/2}\left[ A_{_{3h}}\theta_{_\infty}\left(1-\frac{x}{s_{_{3h}}(t)}\right) +B_{_{3h}}\theta_{_\infty} \left(1-\frac{x}{s_{_{3h}}(t)}\right)^{2}\right],\label{PerfilT3h} \\
s_{_{3h}}(t)&=& 2a\nu_{_{3h}} \sqrt{t},
\end{eqnarray}
where the constants $A_{_{3h}}$ and  $B_{_{3h}}$ are defined by
\begin{align} 
A_{_{3h}}&=\frac{12\nu_{_{3h}} \left(1-\nu_{_{3h}}^2 \left( \frac{\alpha}{2}+\frac{1}{3}\right) \right)  }{2\alpha \nu_{_{3h}}^3+\left( \frac{5\alpha+2}{\mathrm{Bi}}\right)\nu_{_{3h}}^2+\frac{6}{\mathrm{Bi}}+12\nu_{_{3h}} },\label{A3h}\\
B_{_{3h}}&=\frac{12\nu_{_{3h}}^3 \left(\frac{2}{3}\alpha+\frac{1}{3}\right)  }{2\alpha \nu_{_{3h}}^3+\left( \frac{5\alpha+2}{\mathrm{Bi}}\right)\nu_{_{3h}}^2+\frac{6}{\mathrm{Bi}}+12\nu_{_{3h}} },\label{B3h}
 \end{align}
and where  $\nu_{_{3h}}$ is the unique solution to the following equation
\begin{align}
&z^{\alpha+4} 2^{\alpha+1} \alpha+z^{\alpha+3}\left(\tfrac{2^{\alpha}(2+5\alpha)}{\mathrm{Bi}}\right)  +z^{\alpha+2}3\; 2^{\alpha+2}+z^{\alpha+1} \tfrac{3\; 2^{\alpha+1}}{\mathrm{Bi}}\nonumber\\
&+z^2 \mathrm{Ste} (2+3\alpha)-6\mathrm{Ste}=0, \qquad z>0. \label{EcNu3h}
\end{align}

\end{teo}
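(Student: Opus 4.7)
The plan is to closely follow the proof architecture of Theorem \ref{TeoP3} (refined method with Dirichlet condition) and Theorem \ref{TeoP2h} (modified method with Robin condition), combining the Robin boundary treatment from the latter with the refined integral relation from the former. First I would observe that the quadratic ansatz (\ref{Perfil}) for $T_{_{3h}}$ automatically enforces the phase-change condition (\ref{TempFront:1F-pos-tempinfty-A}), so the remaining unknowns $A_{_{3h}},B_{_{3h}},\nu_{_{3h}}$ must be pinned down by three algebraic relations coming from the Stefan condition, the Robin condition, and the refined integral identity.

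The first relation arises exactly as in the proof of Theorem \ref{TeoP2h}: substituting the profile into the Stefan condition (\ref{CondStefan:1F-pos-tempinfty-A}) forces the self-similar form $s_{_{3h}}(t)=2a\nu_{_{3h}}\sqrt{t}$ and yields the single equation $A_{_{3h}}=2^{\alpha+1}\nu_{_{3h}}^{\alpha+2}/\mathrm{Ste}$. The second relation comes from the Robin condition (\ref{FrontFijaConvectiva}), which after computing $T_{_{3h}}(0,t)$ and $\partial_x T_{_{3h}}(0,t)$ reduces to the linear identity
\[
A_{_{3h}}(1+2\mathrm{Bi}\,\nu_{_{3h}})+2B_{_{3h}}(1+\mathrm{Bi}\,\nu_{_{3h}})=2\mathrm{Bi}\,\nu_{_{3h}},
\]
just as in the derivation of (\ref{A2h})--(\ref{B2h}). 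The third relation is obtained by plugging the profile into the refined integral equation (\ref{EcCalor:1F-pos-tempinfty-A}$^\dag$) and computing the iterated integral on the left-hand side; this calculation is essentially identical to the one performed in the proof of Theorem \ref{TeoP3} and produces
\[
\nu_{_{3h}}^{2}\Bigl[A_{_{3h}}\bigl(\tfrac{1}{3}+\tfrac{2\alpha}{3}\bigr)+B_{_{3h}}\bigl(\tfrac{1}{3}+\tfrac{\alpha}{2}\bigr)\Bigr]=B_{_{3h}}.
\]

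Next I would solve the last two relations as a linear $2\times2$ system for $A_{_{3h}},B_{_{3h}}$ in terms of $\nu_{_{3h}}$ (note that $\mathrm{Ste}$ does not appear in this subsystem, which explains why the closed forms (\ref{A3h}) and (\ref{B3h}) are free of $\mathrm{Ste}$). Inserting these expressions into the Stefan relation $A_{_{3h}}\,\mathrm{Ste}=2^{\alpha+1}\nu_{_{3h}}^{\alpha+2}$ and clearing denominators produces the single polynomial equation (\ref{EcNu3h}) for $\nu_{_{3h}}$.

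For existence and uniqueness I would let $w_{_{3h}}(z)$ denote the left-hand side of (\ref{EcNu3h}) and use a sign/monotonicity argument analogous to the one closing the proof of Theorem \ref{TeoP2h}: observe $w_{_{3h}}(0)=-6\,\mathrm{Ste}<0$ and $w_{_{3h}}(+\infty)=+\infty$, and check that $w_{_{3h}}'(z)>0$ for all $z>0$ because every non-constant term has a nonnegative coefficient and the $z^{2}$ term contributes $2(2+3\alpha)\mathrm{Ste}\,z>0$ since $\alpha\geq 0$ and $\mathrm{Ste}>0$. Strict monotonicity then gives the unique positive root. The main obstacle is bookkeeping: the $2\times 2$ inversion yielding (\ref{A3h})--(\ref{B3h}) and the subsequent algebra leading to (\ref{EcNu3h}) is lengthy, and care is needed to verify the sign of every coefficient appearing in $w_{_{3h}}$ so that the monotonicity argument actually goes through uniformly in $\alpha\geq 0$ and $\mathrm{Bi}>0$.
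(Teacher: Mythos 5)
Your proposal is correct and follows essentially the same route as the paper: the paper's own proof simply states that the argument of Theorem \ref{TeoP2h} carries over, with the refined integral identity $\nu_{_{3h}}^{2}\left[A_{_{3h}}\left(\tfrac{1}{3}+\tfrac{2}{3}\alpha\right)+B_{_{3h}}\left(\tfrac{1}{3}+\tfrac{\alpha}{2}\right)\right]=B_{_{3h}}$ replacing the heat-balance relation, which is exactly the three-equation system (Stefan, Robin, refined) you set up, solved, and closed with the same $w_{_{3h}}(0)<0$, $w_{_{3h}}(+\infty)=+\infty$, $w_{_{3h}}'>0$ monotonicity argument. Your expanded bookkeeping (the $2\times 2$ elimination yielding (\ref{A3h})--(\ref{B3h}) and the reduction to (\ref{EcNu3h})) checks out, so the proposal is a faithful, more detailed rendering of the paper's proof.
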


\begin{proof}
The proof is similar to the one given in Theorem \ref{TeoP2h}. The only difference lies in the fact that equation  (\ref{EcCalor:1F-pos-tempinfty-A}$^{\dag}$) is equivalent to
\be
\nu_{_{3h}}^2 \left[A_{_{3h}}\left( \tfrac{1}{3}+\tfrac{2}{3}\alpha\right)+B_{_{3h}}\left( \tfrac{1}{3}+\tfrac{\alpha}{2}\right) \right]=B_{_{3h}} 
\ee

\end{proof}

The approximated problem  (P$_{_{3h}}$) obtained when applying the refined integral method verify the same convergence property than the exact problem (P$_{h}$). 

\begin{teo} \label{ConvergenciaP3h}
The unique solution to problem (P$_{_{3h}}$) given by Theorem \ref{TeoP3h} converges to the unique solution to problem (P$_{_3}$), given by Theorem \ref{TeoP3}, when the coefficient that charaterizes the heat transfer at the fixed face  $h$ goes to infinity.
\end{teo}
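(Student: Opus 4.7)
The plan is to mirror the proof of Theorem \ref{ConvergenciaP2h} line by line, exploiting the fact that the characteristic equations (\ref{EcNu3h}) and (\ref{EcNu3}) for $\nu_{_{3h}}$ and $\nu_{_3}$ differ only in the two extra monomials proportional to $1/\mathrm{Bi}$. First I would set $\omega_{_{3h}}(z)$ and $\omega_{_3}(z)$ equal to the left-hand sides of (\ref{EcNu3h}) and (\ref{EcNu3}), respectively; the existence and uniqueness of their positive roots $\nu_{_{3h}}$ and $\nu_{_3}$ are already at hand from Theorems \ref{TeoP3h} and \ref{TeoP3}.

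Next I would establish the monotone-and-bounded structure that drives the passage to the limit. A direct subtraction gives
\begin{equation*}
\omega_{_{3h}}(z)-\omega_{_3}(z)= z^{\alpha+3}\,\frac{2^{\alpha}(2+5\alpha)}{\mathrm{Bi}}+z^{\alpha+1}\,\frac{3\cdot 2^{\alpha+1}}{\mathrm{Bi}}>0,\qquad z>0,
\end{equation*}
so by the strict monotonicity of $\omega_{_3}$ (shown in Theorem \ref{TeoP3}) the unique positive roots satisfy $\nu_{_{3h}}<\nu_{_3}$. An identical subtraction for two values $h_1<h_2$ shows $\omega_{_{\mathrm{3}h_1}}(z)>\omega_{_{\mathrm{3}h_2}}(z)$ on $z>0$, which, together with the monotonicity of each $\omega_{_{\mathrm{3}h_i}}$, forces $\nu_{_{\mathrm{3}h_1}}<\nu_{_{\mathrm{3}h_2}}$. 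Hence $\{\nu_{_{3h}}\}_{h>0}$ is increasing and bounded above by $\nu_{_3}$, so it has a limit $\nu^{\ast}\le\nu_{_3}$.

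The third step is to identify $\nu^{\ast}$ with $\nu_{_3}$. Since the two extra monomials in $\omega_{_{3h}}-\omega_{_3}$ vanish uniformly on compact sets as $\mathrm{Bi}\to\infty$, one has $\omega_{_{3h}}(\nu_{_{3h}})\to\omega_{_3}(\nu^{\ast})$; but $\omega_{_{3h}}(\nu_{_{3h}})=0$ for every $h$, so $\omega_{_3}(\nu^{\ast})=0$ and uniqueness of the positive root of $\omega_{_3}$ gives $\nu^{\ast}=\nu_{_3}$. This immediately yields $s_{_{3h}}(t)=2a\nu_{_{3h}}\sqrt{t}\to 2a\nu_{_3}\sqrt{t}=s_{_3}(t)$ for every $t>0$. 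Finally, by substituting the limit $\nu_{_{3h}}\to\nu_{_3}$ (and $1/\mathrm{Bi}\to 0$) into the closed-form expressions (\ref{A3h}) and (\ref{B3h}), and checking that the resulting values reduce to the formulas (\ref{A3}) and (\ref{B3}) for $A_{_3}$ and $B_{_3}$, one concludes $A_{_{3h}}\to A_{_3}$ and $B_{_{3h}}\to B_{_3}$, whence $T_{_{3h}}(x,t)\to T_{_3}(x,t)$ pointwise for $0<x<s_{_3}(t)$, $t>0$.

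The only step that is not purely mechanical is the last one, namely verifying that the limiting expressions $\lim_{\mathrm{Bi}\to\infty}A_{_{3h}}$ and $\lim_{\mathrm{Bi}\to\infty}B_{_{3h}}$ actually coincide with the formulas (\ref{A3})–(\ref{B3}) derived from the Dirichlet problem. This is not completely obvious from the surface form of the fractions, because the $A_{_{3h}},B_{_{3h}}$ in (\ref{A3h})–(\ref{B3h}) were obtained from the Robin condition plus the refined integral relation, whereas $A_{_3},B_{_3}$ came from the Dirichlet condition plus the same integral relation; the equivalence must be extracted by using the limiting version of equation (\ref{EcNu3h}) (that is, using $\omega_{_3}(\nu_{_3})=0$) to simplify the numerators and denominators. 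This algebraic reduction is the only real obstacle, and it is routine once the characteristic equation is used to eliminate the highest-order terms in $\nu_{_3}$.
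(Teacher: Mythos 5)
Your proposal is correct and is essentially the paper's own argument: the paper proves Theorem \ref{ConvergenciaP2h} by exactly this scheme (the difference $\omega_{_{3h}}-\omega_{_3}$ consists of two positive monomials proportional to $1/\mathrm{Bi}$, giving a family of roots increasing in $h$ and bounded above, whose limit must be the unique positive root of $\omega_{_3}$, after which $s_{_{3h}}\to s_{_3}$ and $A_{_{3h}}\to A_{_3}$, $B_{_{3h}}\to B_{_3}$ yield pointwise convergence of the temperatures), and it then simply declares the proof of Theorem \ref{ConvergenciaP3h} analogous. One remark on the final step you rightly flag as non-mechanical: the limits of (\ref{A3h})--(\ref{B3h}) reduce, using (\ref{EcNu3}), to $A_{_3}=\frac{6-(2+3\alpha)\nu_{_3}^2}{6+\alpha\nu_{_3}^2}=\frac{2^{\alpha+1}\nu_{_3}^{\alpha+2}}{\mathrm{Ste}}$ and $B_{_3}=1-A_{_3}$, i.e.\ to the coefficients actually determined by the Dirichlet, Stefan and refined-integral relations for (P$_{_3}$), rather than to the literal printed formulas (\ref{A3})--(\ref{B3}), which duplicate (\ref{A2})--(\ref{B2}) verbatim and are not consistent with (\ref{EcNu3}) for general $\alpha$ --- an apparent typographical slip in the paper, not a gap in your argument.
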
 
\begin{proof}
The proof is analogous to the proof given in Theorem \ref{ConvergenciaP2h}.
\end{proof}

\medskip
In Figure \ref{Fig:Nu3hVsNuh}  we compare graphically, for different values of $\Bt>1$, the coefficient $\nu_{_{3h}}$ that characterizes the approximate free boundary $s_{_{3h}}$ with the coefficient $\nu_{h}$  corresponding to the exact free boundary $s_{h}$, for different values of $\alpha$ fixing $\Ste=0.5$. Once again, as  $\Bt$ increases, the value $\nu_{_{3h}}$ becomes closer to the value $\nu_{_3}$

\begin{figure}[h!!]
\begin{center}
\includegraphics[scale=0.4]{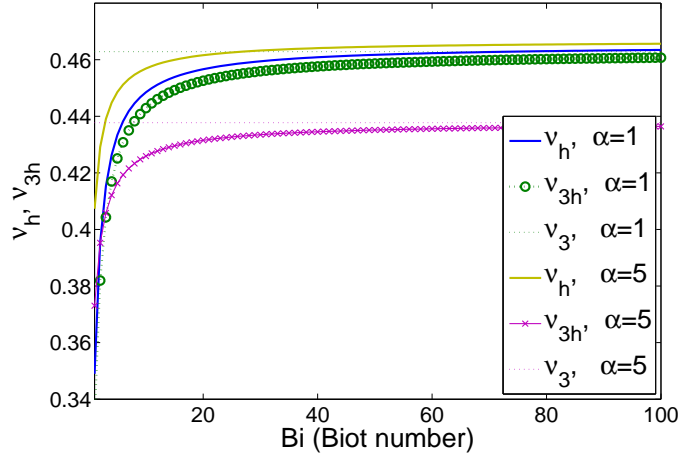}
\end{center}
\caption{{\footnotesize  Plot of $\nu_{h}$ and $\nu_{_{3h}}$ against $\Bt$ for $\alpha=1$ or 5 and $\Ste=0.5$}}\label{Fig:Nu3hVsNuh}
\end{figure}
\newpage

\subsection{Comparisons between the approximate solutions and the exact one}

\medskip

In this section we are going to compare the exact solution to the problem with a convective condition at the fixed face (P$_{h}$) with the approximate solutions obtained by applying the integral balance methods proposed in the previous sections.

For each method, we have defined a new problem (P$_{_{_{ih}}}$), ${i}=1,2,3$  and we have compared graphically the coefficient $\nu_{_{_{ih}}}$  that characterizes each free boundary  $s_{_{_{ih}}}$, with the coefficient $\nu_{h}$  that corresponds to the exact free boundary $s_{h}$.

The goal is to compare numerically the coefficient $\nu_{h}$ given by (\ref{EcNuh}) with the approximate coefficients $\nu_{_{1h}}$, $\nu_{_{2h}}$  and $\nu_{_{3h}}$ given by (\ref{EcNu1h}), (\ref{EcNu2h}) and (\ref{EcNu3h}), respectively.

In order that the comparisons be more representative, in Tables  \ref{Tabla:NuihVsNu1h}-\ref{Tabla:NuihVsNu3h} we show the exact value $\nu_{h}$, the approximate value $\nu_{_{_{ih}}}$ and the percentaje error committed in each case $E(\nu_{_{_{ih}}})=100\left\vert\frac{\nu_{h}-\nu_{{_{ih}}}}{\nu_{h}} \right\vert$, ${i}=1,2,3$ for different values of $\Bt$ and $\alpha$ fixing $\Ste=0.5$.

\begin{table}
\small
\caption{{\footnotesize Dimensionless coefficients of the free boundaries and their percentage relative error for  $\alpha=0$ and $\Ste=0.5$.}}
\label{Tabla:NuihVsNu1h}  
\begin{center}
\begin{tabular}{cc|cc|cc|cc}
\hline
\Bt   & $\nu_{h}$     &   $\nu_{_{1h}}$ & $E_{\text{rel}}(\nu_{_{1h}})$       &   $ \nu_{_{2h}} $  & $E_{\text{rel}}(\nu_{_{2h}})$      &   $\nu_{_{3h}}$ & $E_{\text{rel}}(\nu_{_{3h}})$   \\
\hline
   1&    0.2926 &   0.2966  &  1.3828  \% &  0.2937 &   0.3939 \% &   0.2899 &   0.9103 \% \\
   10 &    0.4422 &   0.4681  &  5.8548  \% &  0.4484 &   1.4111 \% &   0.4545 &   2.7969 \% \\
   20 &    0.4533 &   0.4776  &  5.3525  \% &  0.4602 &   1.5151 \% &   0.4672 &   3.0744 \% \\
   30 &    0.4571 &   0.4807  &  5.1622  \% &  0.4642 &   1.5514 \% &   0.4716 &   3.1679 \% \\
   40 &    0.4590 &   0.4822  &  5.0628  \% &  0.4662 &   1.5699 \% &   0.4738 &   3.2148 \% \\
   50 &    0.4601 &   0.4832  &  5.0019  \% &  0.4674 &   1.5811 \% &   0.4751 &   3.2430 \% \\
   60 &    0.4609 &   0.4838  &  4.9606  \% &  0.4682 &   1.5886 \% &   0.4759 &   3.2618 \% \\
   70 &    0.4615 &   0.4842  &  4.9309  \% &  0.4688 &   1.5940 \% &   0.4766 &   3.2752 \% \\
   80 &    0.4619 &   0.4845  &  4.9085  \% &  0.4693 &   1.5980 \% &   0.4771 &   3.2853 \% \\
   90 &    0.4622 &   0.4848  &  4.8909  \% &  0.4696 &   1.6012 \% &   0.4774 &   3.2932 \% \\
  100 &    0.4625 &   0.4850  &  4.8768  \% & 0.4699  &  1.6037  \% &   0.4777 &   3.2994 \% \\

    \hline
\end{tabular}
\end{center}
\end{table}

\begin{table}
\small
\caption{{\footnotesize Dimensionless coefficients of the free boundaries and their percentage relative error for  $\alpha=5$ and $\Ste=0.5$.}}
\label{Tabla:NuihVsNu2h}  
\begin{center}
\begin{tabular}{cc|cc|cc|cc}
\hline
\Bt    & $\nu_{h}$     &   $\nu_{_{1h}}$ & $E_{\text{rel}}(\nu_{_{1h}})$       &   $ \nu_{_{2h}} $  & $E_{\text{rel}}(\nu_{_{2h}})$      &   $\nu_{_{3h}}$ & $E_{\text{rel}}(\nu_{_{3h}})$   \\
\hline
   1&    0.3274 &   0.3293  &   0.5908  \% &   0.3280   & 0.1779 \% &    0.3160 &   3.4746 \% \\
   10 &    0.4459 &   0.4551  &  2.0484   \% &  0.4480    & 0.4543 \% &    0.4474 &   0.3370 \% \\
   20 &    0.4553 &   0.4631  &  1.7173   \% &  0.4574    & 0.4798 \% &    0.4583 &   0.6724 \% \\
   30 &    0.4585 &   0.4657  &  1.5912   \% &  0.4607    & 0.4886 \% &    0.4621 &   0.7874 \% \\
   40 &    0.4601 &   0.4671  &  1.5250   \% &  0.4623    & 0.4931 \% &    0.4640 &   0.8456 \% \\
   50 &    0.4610 &    0.4679 &   1.4844  \% &   0.4633   & 0.4958 \% &    0.4651 &   0.8807 \% \\
   60 &    0.4617 &   0.4684  &  1.4569    \% & 0.4640    & 0.4976 \% &    0.4659 &   0.9042 \% \\
   70 &    0.4622 &   0.4688  &  1.4370   \% &  0.4645    & 0.4989 \% &    0.4664 &   0.9210 \% \\
   80 &    0.4625 &   0.4691  &  1.4220  \% &   0.4648    & 0.4999  \% &   0.4668 &   0.9336 \% \\
   90 &    0.4628 &   0.4693  &  1.4103  \% &   0.4651    & 0.5006 \% &    0.4672 &   0.9434 \% \\
  100 &    0.4630 &   0.4695  &  1.4009   \% &  0.4653    & 0.5012  \% &   0.4674 &   0.9513 \% \\
    \hline
\end{tabular}
\end{center}
\end{table}

\begin{table}
\small
\caption{{\footnotesize Dimensionless coefficients of the free boundaries and their percentage relative error for  $\alpha=0.5$ and $\Ste=0.5$.}}
\label{Tabla:NuihVsNu3h}  
\begin{center}
\begin{tabular}{cc|cc|cc|cc}
\hline
\Bt   & $\nu_{h}$     &   $\nu_{_{1h}}$ & $E_{\text{rel}}(\nu_{_{1h}})$       &   $ \nu_{_{2h}} $  & $E_{\text{rel}}(\nu_{_{2h}})$      &   $\nu_{_{3h}}$ & $E_{\text{rel}}(\nu_{_{3h}})$   \\
\hline
   1 &    0.4073  &  0.3834 &   5.8702  \% &    0.4005 &   1.6647 \% &    0.3730  &  8.4069 \% \\
   10  &    0.4569  &  0.4170 &   8.7307  \% &    0.4437 &   2.8806 \% &    0.4259  &  6.7799 \% \\
   20  &    0.4616  &  0.4203 &   8.9507  \% &    0.4476 &   3.0301 \% &    0.4315  &  6.5196 \% \\
   30  &    0.4632  &  0.4214 &   9.0256  \% &    0.4489 &   3.0845 \% &    0.4335  &  6.4217 \% \\
   40  &    0.4641  &  0.4220 &   9.0633  \% &    0.4496 &   3.1126 \% &    0.4345  &  6.3703 \% \\
   50  &    0.4646  &  0.4224 &   9.0861  \% &    0.4501 &   3.1298 \% &    0.4351  &  6.3387 \% \\
   60  &    0.4649  &  0.4226 &   9.1012  \% &    0.4503 &   3.1414 \% &    0.4356  &  6.3173 \% \\
   70  &    0.4652  &  0.4228 &   9.1121  \% &    0.4505 &   3.1497 \% &    0.4359  &  6.3018 \% \\
   80  &    0.4654  &  0.4229 &   9.1203  \% &    0.4507 &   3.1560 \% &    0.4361  &  6.2901 \% \\
   90  &    0.4655  &  0.4230 &   9.1266  \% &    0.4508 &   3.1609 \% &    0.4363  &  6.2809 \% \\
  100  &    0.4656  &  0.4231 &   9.1317  \% &    0.4509 &   3.1649 \% &    0.4364  &  6.2736 \% \\

    \hline
\end{tabular}
\end{center}
\end{table}
\newpage

From the above tables we can deduce that for $\alpha=0.5$, the percentage error committed is smaller than for the other cases. In all cases, as it happened with the problem (P), the method with best accuracy for approximating the problem (P$_{h}$) is the modified integral method, i.e. the best approximate problem is given by (P$_{_{2h}}$).

We can also compare the exact temperature $T_{h}$ with the approximate ones $T_{_{_{ih}}}$, ${i}=1,2,3$, given by (\ref{PerfilT1h}), (\ref{PerfilT2h}) and (\ref{PerfilT3h}), respectively. In Figures  (11)-(14) we show a color map for $\alpha=5$, $\Ste=0.5$, $\theta_{_\infty}=30, a=1$

\begin{figure}[h!]\centering
   \begin{minipage}{0.49\textwidth}
   \begin{center}
    \includegraphics[scale=0.3]{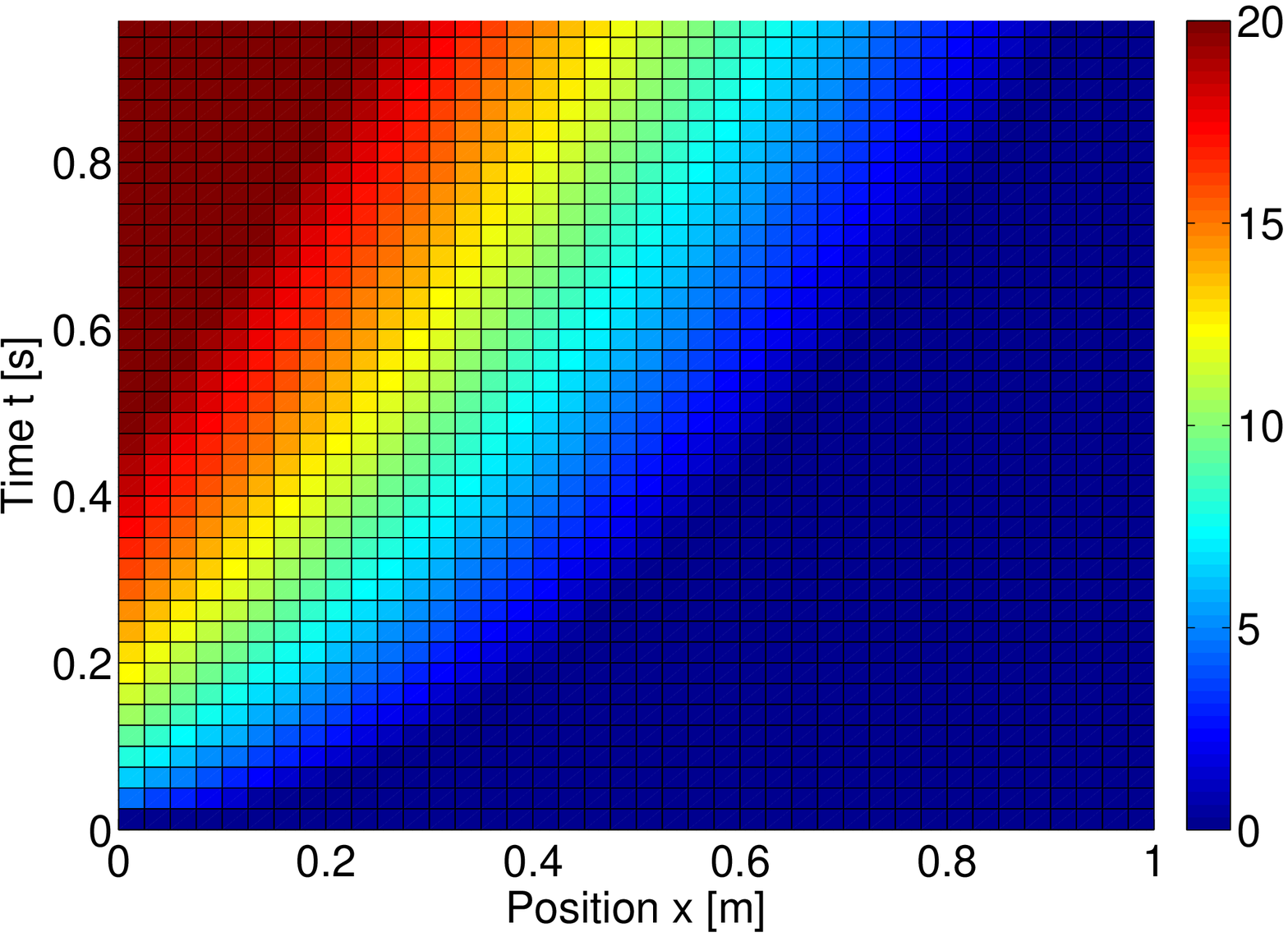}
     \caption{{\footnotesize Colour map for $T_{h}$ }}
   \end{center}
\label{Fig:ExactaConv}
   \end{minipage}
   \begin {minipage}{0.49\textwidth}
\begin{center}
	\includegraphics[scale=0.3]{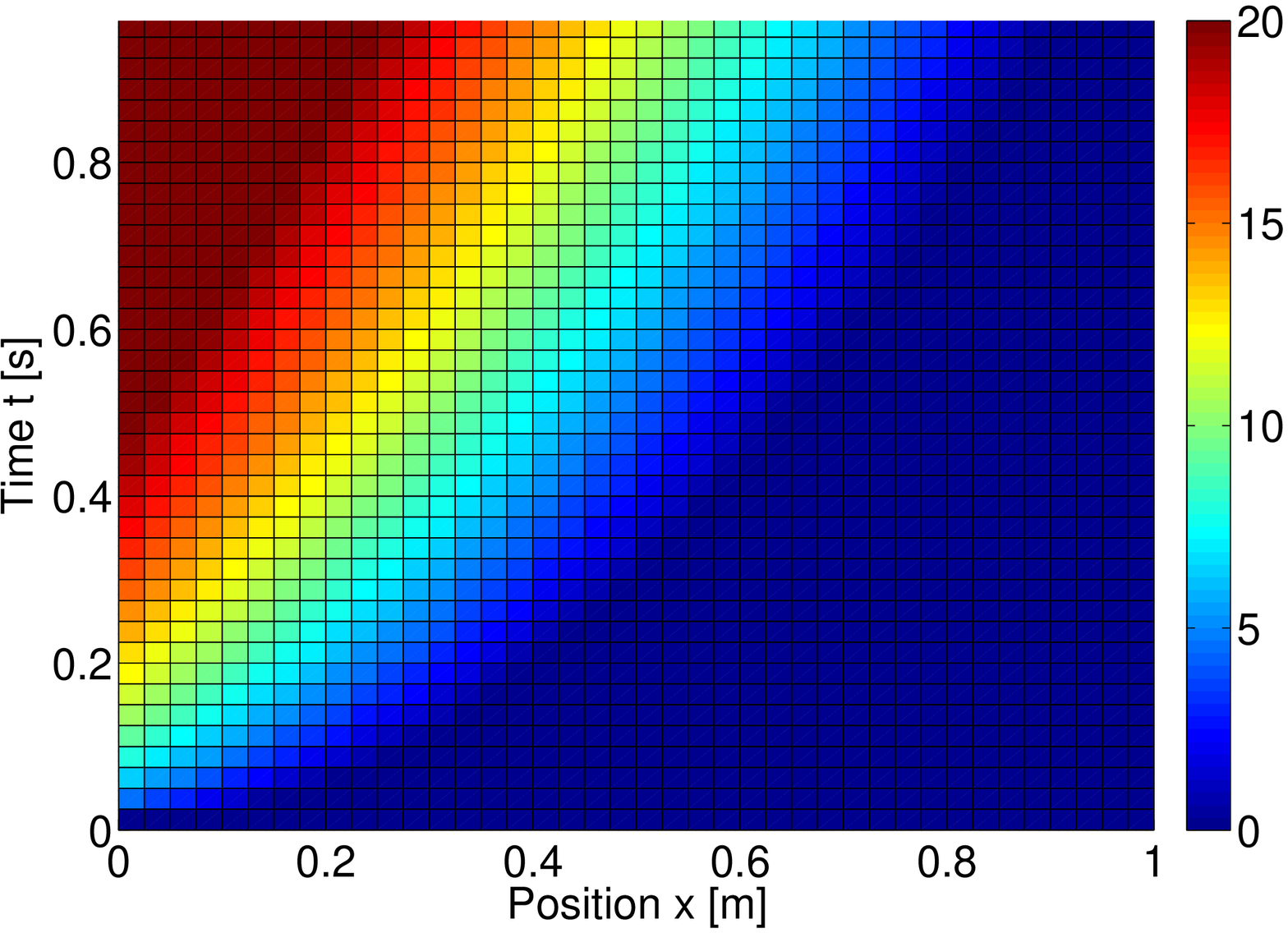}
	  \caption{ {\footnotesize  Colour map for $T_{_{1h}}$}}
\end{center}
   \label{Fig:BICConv}
   \end{minipage}
\end{figure}

\begin{figure}[h!]\centering
   \begin{minipage}{0.49\textwidth}
   \begin{center}
    \includegraphics[scale=0.3]{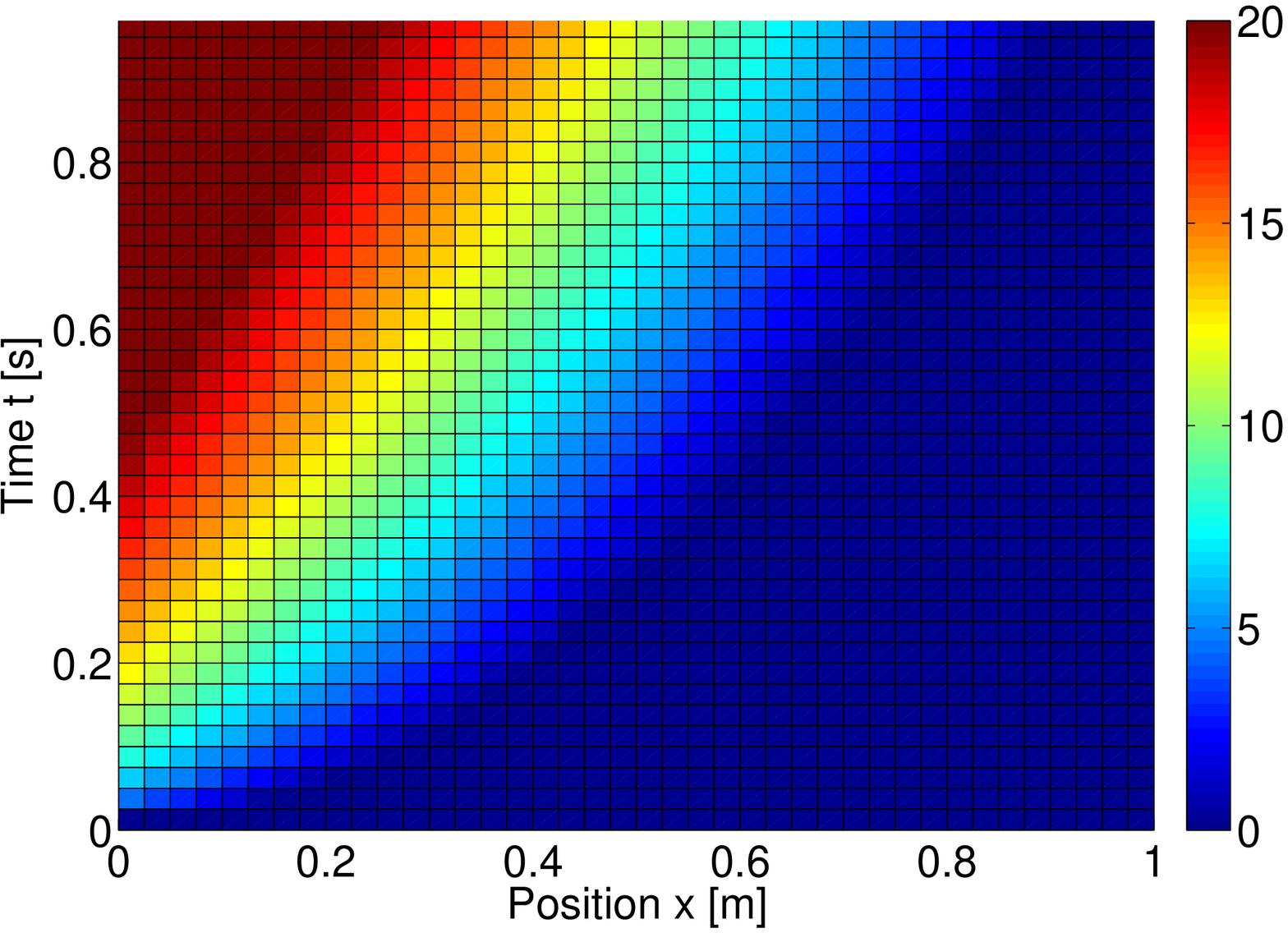}
     \caption{{\footnotesize Colour map for $T_{_{2h}}$ }}
   \end{center}
\label{Fig:BIMConv}
   \end{minipage}
   \begin {minipage}{0.49\textwidth}
\begin{center}
	\includegraphics[scale=0.3]{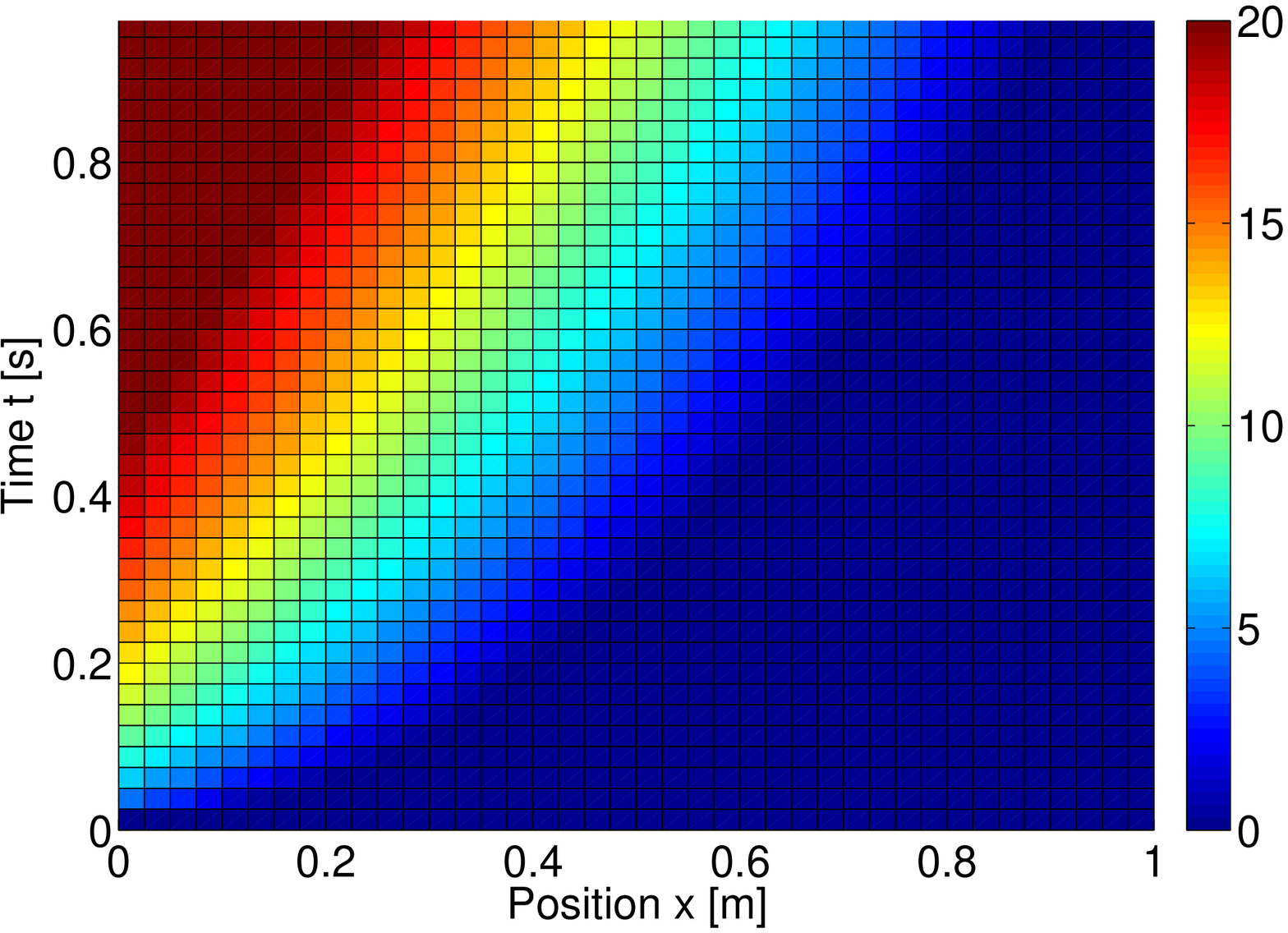}
	  \caption{ {\footnotesize  Colour map for $T_{_{3h}}$}}
\end{center}
   \label{Fig:RIMConv}
   \end{minipage}
\end{figure}

\newpage

\section{Minimising the least-squares error in the heat balance integral method}

In this section, we are going to analyse the least-squares error that we commit when assuming a quadratic profile in space. If we have an approximate solution for the heat equation given by $\hat{T}$, $\hat{s}$ such that 
\begin{equation}\label{PerfilQuadTemp}
\hat{T}(x,t)=t^{\alpha/2}\theta_{\infty}\left[\hat{A}\left( 1-\frac{x}{\hat{s}(t)}\right)+\hat{B}\left(  1-\frac{x}{\hat{s}(t)}\right)^2\right], 
\end{equation}
with adequate coefficients $\hat{A}, \hat{B}$ and $\hat{s}$, then we can measure how far we are from the heat equation by computing the least-squares error  (see \cite{RiMyMc2019}) given by 
\begin{equation}\label{ErrorTemp}
E=\int\limits_0^{\hat{s}(t)} \left(\pder[\hat{T}]{t}(x,t)-a^2\pder[^2 \hat{T}]{x^2}(x,t) \right)^2 \dee x
\end{equation}
Taking into account that
\begin{align}
\pder[\hat{T}]{t}(x,t)&=\frac{\alpha}{2} t^{\alpha/2-1} \theta_{_\infty}\left[ \hat{A}\left(1-\frac{x}{\hat{s}(t)}\right)+\hat{B}\left(1-\frac{x}{\hat{s}(t)}\right)^2\right]\nonumber\\
& +t^{\alpha/2} \frac{\dot{\hat{s}}(t) }{\hat{s}^2(t)} x \theta_{_\infty} \left[ \hat{A}+2\hat{B}\left(1-\frac{x}{\hat{s}(t)}\right)\right]
\end{align}
and
\begin{align}
\pder[^2\hat{T}]{x^2}(x,t)=t^{\alpha/2}\frac{2\hat{B}\theta_{_\infty}}{\hat{s}^2(t)}
\end{align}
we get 
\begin{align}
E&=\tfrac{\alpha^2}{4} \theta_{_\infty}^2 t^{\alpha-2}\left(\tfrac{\hat{A}^2}{3}+\tfrac{\hat{A}\hat{B}}{2}+\tfrac{\hat{B}^2}{5} \right)+t^{\alpha} \theta_{_\infty}^2 \tfrac{\dot{\hat{s}}^2(t)}{\hat{s}^2(t)} \left( \tfrac{\hat{A}^2}{3}+\tfrac{\hat{A} \hat{B}}{3}+\tfrac{2\hat{B}^2}{15}\right)\nonumber\\
&+ 4t^{\alpha}a^4 \theta_{_\infty}^2  \tfrac{\hat{B}^2}{\hat{s}^4(t)}+\alpha \theta_{_\infty}^2 t^{\alpha-1}\tfrac{\dot{\hat{s}}(t)}{\hat{s}(t)} \left( \tfrac{\hat{A}^2}{6}+\tfrac{\hat{A} \hat{B}}{4}+\tfrac{\hat{B}^2}{10}\right)-2\alpha a^2\theta_{_\infty}^2 t^{\alpha-1} \tfrac{\hat{B}}{\hat{s}^2(t)}\left( \tfrac{\hat{A}}{2}+\tfrac{\hat{B}}{3}\right)\nonumber\\
&-4a^2 \theta_{_\infty}^2 t^{\alpha} \tfrac{\dot{\hat{s}}(t)}{\hat{s}^3(t)} \hat{B}\left(\tfrac{\hat{A}}{2}+\tfrac{\hat{B}}{3}\right)\label{ErrorCuadraticoGenerico}
\end{align}
In case that the free boundary $\hat{s}(t)=2a\xi\sqrt{t}$ with $\xi>0$, by simple computations, the least-squares error becomes $E=E(\xi)$, given by the following expression:
\begin{align}
E(\xi)& =t^{\alpha-2} \tfrac{\theta_{_\infty}^2}{\xi^4} \left[ \tfrac{\xi^4}{4}\left( \alpha^2 \left( \tfrac{\hat{A}^2}{3}+\tfrac{\hat{A}\hat{B}}{2}+\tfrac{\hat{B}^2}{5} \right)+2\alpha \left(\tfrac{\hat{A}^2}{6}+\tfrac{\hat{A}\hat{B}}{4}+\tfrac{\hat{B}^2}{10} \right)+\tfrac{\hat{A}^2}{3}+\tfrac{\hat{A}\hat{B}}{3}+\tfrac{2\hat{B}^2}{15}        \right)\right.\nonumber\\
&\left.-\tfrac{\xi^2}{2}\hat{B}(\alpha+1) \left( \tfrac{\hat{A}}{2}+\tfrac{\hat{B}}{3}\right)+\tfrac{\hat{B}^2}{4}
\right]\label{ErrorCuadraticoGenerico2}
\end{align}

Let us then define a new approximate\textbf{ problem (P$_{_4}$)} for the problem (P) that consists in finding the free boundary $s_{_4}=s_{_4}(t)$ and the temperature $T_{_4}=T_{_4}(x,t)$ in the domain $0<x<s_{_4}(t)$ given by the profile (\ref{PerfilQuadTemp}) such that they minimize the least-squares error (\ref{ErrorCuadraticoGenerico}) subject  to the conditions (\ref{FrontFija:1F-pos-tempinfty-A}), (\ref{TempFront:1F-pos-tempinfty-A}), (\ref{CondStefan:1F-pos-tempinfty-A}) and (\ref{FrontInicial:1F-pos-tempinfty-A}).

\begin{teo}\label{TeoP4}
If a free boundary $s_{_4}$ and a temperature $T_{_4}$ constitute  a solution to problem (P$_{_4}$) then they are given by the expressions:
\begin{eqnarray}
T_{_4}(x,t)&=&t^{\alpha/2}\theta_{_\infty} \left[ A_{_4}\left(1-\frac{x}{s_{_{4}}(t)}\right) +B_{_{4}} \left(1-\frac{x}{s_{_{4}}(t)}\right)^{2}\right],\label{PerfilT4} \\
s_{_{4}}(t)&=& 2a\nu_{_{4}} \sqrt{t},
\end{eqnarray}
where the constants $A_{_{4}}$ and $B_{_{4}}$ are defined as a function of $\nu_{_{4}}$ as
\begin{align} \label{A4}
A_{_4}=\frac{2^{\alpha+1}  \nu_{_4}^{\alpha+2}  }{\mathrm{Ste}}, \qquad \qquad B_4= 1- \frac{2^{\alpha+1}  \nu_{_4}^{\alpha+2}  }{\mathrm{Ste}},
\end{align}
and where $\nu_{_{4}}>0$ must minimize for every $t>0$, the function
\begin{align}
E(\xi)=\frac{t^{\alpha-2} \theta_{_\infty}^2}{60 \mathrm{Ste}^2} \frac{p(\xi)}{\xi^4},\quad \forall t>0 \label{EcNu4}
\end{align}
with 
\begin{align}
p(\xi)&= \xi^{8+2\alpha} 2^{2\alpha+1}(\alpha^2+\alpha+4)+ 5 \;\xi^{2\alpha+6} 2^{2\alpha+2}(1+\alpha)+ 15\;\xi^{2\alpha+4}  2^{2\alpha+2}\nonumber\\
&+\xi^{\alpha+6} 2^{\alpha} \mathrm{Ste} (2+3\alpha+3\alpha^2)+5\; \xi^{\alpha+4} 2^{\alpha+1}\mathrm{Ste} (1+\alpha)\nonumber\\
&-15\; \xi^{2+\alpha} 2^{\alpha+2} 2^{\alpha+2} \mathrm{Ste}+\xi^4 \mathrm{Ste}^2 (2+3\alpha+3\alpha^2)\nonumber\\
&-10\xi^2\mathrm{Ste}^2 (1+\alpha)+15\mathrm{Ste}^2.\label{p4}
\end{align}
\end{teo}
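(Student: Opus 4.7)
The plan is to use the explicit boundary/Stefan conditions to reduce the three unknowns $\hat{A},\hat{B},\hat{s}$ in the generic profile (\ref{PerfilQuadTemp}) to a single free parameter $\nu_{_4}$, and then substitute into the error functional (\ref{ErrorCuadraticoGenerico2}) to obtain $E$ as a function of $\nu_{_4}$ alone, whose minimisers characterise the approximate solution. Since the theorem only asserts the necessary form of a solution (''if \dots then\dots''), existence of a minimiser will not have to be addressed.

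I would begin by observing that the quadratic profile (\ref{PerfilQuadTemp}) automatically verifies the phase-change condition (\ref{TempFront:1F-pos-tempinfty-A}), so no information is lost at $x=\hat{s}(t)$. Evaluating the Dirichlet condition (\ref{FrontFija:1F-pos-tempinfty-A}) at $x=0$ gives $A_{_4}+B_{_4}=1$, i.e.\ $B_{_4}=1-A_{_4}$. Next, computing
\begin{equation*}
\pder[T_{_4}]{x}(s_{_4}(t),t)=-t^{\alpha/2}\theta_{_\infty}\frac{A_{_4}}{s_{_4}(t)},
\end{equation*}
and plugging into the Stefan condition (\ref{CondStefan:1F-pos-tempinfty-A}), I obtain the ODE
\begin{equation*}
k\,t^{\alpha/2}\theta_{_\infty}\frac{A_{_4}}{s_{_4}(t)}=\gamma\,s_{_4}^{\alpha}(t)\,\dot s_{_4}(t).
\end{equation*}
As $A_{_4}$ is to be constant (forced by $A_{_4}+B_{_4}=1$ holding for every $t>0$), integrating this ODE gives $s_{_4}^{\alpha+2}(t)\propto t^{\alpha/2+1}$, which forces the self-similar form $s_{_4}(t)=2a\nu_{_4}\sqrt{t}$. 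Matching the proportionality constants and using the definition (\ref{Ste}) of the Stefan number yields the explicit relation $A_{_4}=2^{\alpha+1}\nu_{_4}^{\alpha+2}/\mathrm{Ste}$, so that (\ref{A4}) follows together with $B_{_4}=1-A_{_4}$.

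At this stage both $\hat{A}$ and $\hat{B}$ are known functions of $\nu_{_4}$, and the free boundary has the form $\hat{s}(t)=2a\xi\sqrt{t}$ with $\xi=\nu_{_4}$, so the generic specialisation (\ref{ErrorCuadraticoGenerico2}) applies directly. Substituting $\hat{A}=2^{\alpha+1}\xi^{\alpha+2}/\mathrm{Ste}$ and $\hat{B}=1-2^{\alpha+1}\xi^{\alpha+2}/\mathrm{Ste}$ into the bracket of (\ref{ErrorCuadraticoGenerico2}), the $t$-dependence separates out cleanly as the prefactor $t^{\alpha-2}\theta_{_\infty}^{2}/\xi^{4}$, while the remaining factor becomes, after reduction to common denominator $60\,\mathrm{Ste}^{2}$, a polynomial in $\xi$ of mixed degrees $\xi^{2\alpha+8},\xi^{2\alpha+6},\xi^{2\alpha+4},\xi^{\alpha+6},\xi^{\alpha+4},\xi^{\alpha+2},\xi^{4},\xi^{2},1$, the three families arising from $\hat A^{2}$, $\hat A\hat B$ and constants. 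Collecting coefficients monomial by monomial, these should match precisely those listed in (\ref{p4}), leading to the claimed expression (\ref{EcNu4}).

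The main obstacle is purely algebraic: carrying out the substitution of the quadratic-in-$\mathrm{Ste}$ expressions for $\hat{A}^{2},\hat{A}\hat{B},\hat{B}^{2}$ into the lengthy bracket of (\ref{ErrorCuadraticoGenerico2}) and carefully combining the resulting nine groups of like powers of $\xi$. Once this bookkeeping is done, the conclusion is immediate: because the prefactor $t^{\alpha-2}\theta_{_\infty}^{2}/(60\,\mathrm{Ste}^{2})$ is positive and $t$-independent after factoring, $E(\xi)$ is minimised for every $t>0$ precisely when $\nu_{_4}$ minimises $p(\xi)/\xi^{4}$, which is exactly how the coefficient $\nu_{_4}$ is characterised in the statement.
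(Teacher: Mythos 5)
Your proposal is correct and follows essentially the same route as the paper's proof: deduce $A_{_4}+B_{_4}=1$ from the Dirichlet condition, integrate the Stefan condition with $s_{_4}(0)=0$ to force $s_{_4}(t)=2a\nu_{_4}\sqrt{t}$ and the relation $A_{_4}=2^{\alpha+1}\nu_{_4}^{\alpha+2}/\mathrm{Ste}$, then substitute into the specialised error functional (\ref{ErrorCuadraticoGenerico2}) so that minimising $E$ reduces to minimising $p(\xi)/\xi^{4}$; the paper likewise leaves the final coefficient collection as a routine algebraic verification. The only differences are cosmetic (you invoke the Dirichlet condition before the Stefan condition, and you spell out the integration the paper states directly), so no gap remains.
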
 

\begin{proof}
Provided that $T_{_4}$ adopts a quadratic profile in space given by (\ref{PerfilT4}), then the condition (\ref{TempFront:1F-pos-tempinfty-A}) holds immediately and the Stefan condition  (\ref{CondStefan:1F-pos-tempinfty-A}) becomes equivalent to
\be
-kt^{\alpha/2}\theta_{_\infty}\frac{A_{_4}}{s_{_4}(t)}=-\gamma s_{_4}^{\alpha}(t)\dot s_{_4}(t).
\ee
Then
\be
s_{_4}(t)=\left( \frac{(\alpha+2)}{(\frac{\alpha}{2}+1)} \frac{k\theta_{_\infty}}{\gamma}A_{_4}\right)^{1/(\alpha+2)} \sqrt{t}.
\ee
Introducing $\nu_{_4}=\frac{1}{2a}\left( \frac{(\alpha+2)}{(\frac{\alpha}{2}+1)} \frac{k\theta_{_\infty}}{\gamma}A_{_4}\right)^{1/(\alpha+2)}$, the free boundary becomes
\be 
s_{_4}(t)=2a\; \nu_{_4}\sqrt{t},\label{s4}
\ee
and
\be\label{RelA4}
A_{_4}=\frac{2^{\alpha+1} \nu_{_4}^{\alpha+2}}{\Ste} .
\ee
In addition, from the boundary condition at the fixed face (\ref{FrontFija:1F-pos-tempinfty-A}) we get
\be \label{RelA4B4-2}
A_{_4}+B_{_4}=1.
\ee
Then we obtain formulas (\ref{A4}) for the coefficients $A_{_4}$ and $B_{_4}$.
Finally, as the free boundary $s_{_4}$ is defined by (\ref{s4}), we have to minimize the least-squares error 
$E$ given by \ref{ErrorCuadraticoGenerico2}. In addition,  replacing $A_{_4}$ and $B_{_4}$ by  the formulas given in (\ref{A4})  we get that $\nu_{_4}$ must minimize (\ref{EcNu4}).

\end{proof}

\begin{corollary} \label{CorolarioAlpha0-Temp}For the classical Stefan problem, i.e. for the case $\alpha=0$, we get that problem (P$_{_4}$) has a unique solution given by 
\begin{eqnarray}
T_{_{4}}^{(0)}(x,t)&=&\theta_{_\infty} \left[ A_{_4}^{(0)}\left(1-\frac{x}{s_{_{4}^{(0)}}(t)}\right) +B_{_{4}^{(0)}} \left(1-\frac{x}{s_{_{4}^{(0)}}(t)}\right)^{2}\right],\label{PerfilT40} \\
s_{_{4}}^{(0)}(t)&=& 2a\nu_{_{4}}^{(0)} \sqrt{t},
\end{eqnarray}
where the superscript $(0)$ makes reference to the value of $\alpha=0$ and the constants $A_{_{4}}^{(0)}$ and $B_{_{4}}^{(0)}$ are defined as a function of $\nu_{_{4}}^{(0)}$ as
\begin{align} \label{A40}
A_{_4}^{(0)}=\frac{2 (\nu_{_4}^{(0)})^{2}  }{\mathrm{Ste}}, \qquad \qquad B_4= 1- \frac{2 (\nu_{_4}^{(0)})^{2}  }{\mathrm{Ste}}
\end{align}
being $\nu_{_{4}}^{(0)}>0$  the value where the function $E^{(0)}$ attains its minimum
\begin{align}
E^{(0)}(\xi)=\frac{t^{-2} \theta_{_\infty}^2}{60 \mathrm{Ste}^2} \frac{p^{(0)}(\xi)}{\xi^4},\quad \forall t>0 \label{EcNu40}
\end{align}
with 
\begin{align}
p^{(0)}(\xi)&=8\xi^8+2(10+\mathrm{Ste})\xi^6+2(30+5\mathrm{Ste}+\mathrm{Ste}^2)\xi^4\nonumber\\
&-10\mathrm{Ste}(6+\mathrm{Ste})\xi^2+15\mathrm{Ste}^2 \label{p40}
\end{align}
In addition, $\nu_{_4}^{(0)}$ can be obtained as the unique positive root of the following real polynomial
\begin{equation}
r(\xi)=32\xi^8+4(10+\mathrm{Ste})\xi^6+20\mathrm{Ste}(6+\mathrm{Ste})\xi^2-60\mathrm{Ste}^2.\label{r}
\end{equation}

\end{corollary}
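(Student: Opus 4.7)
The plan is to deduce the corollary directly from Theorem \ref{TeoP4} by specialising $\alpha=0$, and then to carry out the one-variable optimisation of $E^{(0)}$ explicitly in this simpler case.

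First I would simply substitute $\alpha = 0$ into the expressions (\ref{A4}) and (\ref{p4}) of Theorem \ref{TeoP4}. Since every $\alpha$-dependent factor in the temperature profile $T_{_4}$ and in the free boundary $s_{_4}$ reduces to its $\alpha=0$ value by substitution, one immediately reads off the claimed formulas for $T_{_4}^{(0)}$, $s_{_4}^{(0)}$, $A_{_4}^{(0)}$ and $B_{_4}^{(0)}$, as well as the polynomial $p^{(0)}$ in the energy $E^{(0)}$.

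Next I would reduce the problem of minimising
\[
E^{(0)}(\xi) \;=\; \frac{t^{-2}\,\theta_{_\infty}^{\,2}}{60\,\mathrm{Ste}^{2}}\cdot\frac{p^{(0)}(\xi)}{\xi^{4}}, \qquad \xi>0,
\]
to the algebraic equation $r(\xi)=0$. The pre-factor in front is positive and, for fixed $t>0$, independent of $\xi$, so it suffices to minimise $\xi \mapsto p^{(0)}(\xi)/\xi^{4}$. A direct differentiation gives
\[
\frac{d}{d\xi}\!\left[\frac{p^{(0)}(\xi)}{\xi^{4}}\right] \;=\; \frac{\xi\,(p^{(0)})'(\xi) - 4\,p^{(0)}(\xi)}{\xi^{5}},
\]
and a termwise computation from (\ref{p40}) shows $\xi\,(p^{(0)})'(\xi) - 4\,p^{(0)}(\xi) = r(\xi)$, so the positive critical points of $E^{(0)}$ coincide with the positive roots of $r$.

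Finally I would verify existence and uniqueness of such a root. One has $r(0) = -60\,\mathrm{Ste}^{2}<0$ while $r(\xi) \to +\infty$ as $\xi\to+\infty$, so a positive root exists by the intermediate value theorem. Uniqueness is immediate from
\[
r'(\xi) \;=\; 256\,\xi^{7} + 24(10+\mathrm{Ste})\,\xi^{5} + 40\,\mathrm{Ste}(6+\mathrm{Ste})\,\xi \;>\; 0 \qquad \text{for all } \xi>0,\; \mathrm{Ste}>0,
\]
so $r$ is strictly increasing on $(0,+\infty)$. Combined with $E^{(0)}(\xi)\to +\infty$ as $\xi\to 0^{+}$ (because $p^{(0)}(0)=15\,\mathrm{Ste}^{2}>0$) and as $\xi\to +\infty$ (because $p^{(0)}$ has positive leading coefficient of degree $8$), the unique positive critical point is necessarily the global minimiser, giving uniqueness of the solution to $\mathrm{(P_{_4})}$ in the case $\alpha=0$. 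I do not anticipate a serious obstacle in this argument; the only mildly tedious ingredient is the termwise polynomial verification that $\xi\,(p^{(0)})'(\xi) - 4\,p^{(0)}(\xi)$ simplifies to the polynomial $r(\xi)$ displayed in (\ref{r}), but this is a routine expansion and collection of coefficients.
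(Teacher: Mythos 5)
Your proposal is correct and takes essentially the same route as the paper's own proof: specialise Theorem \ref{TeoP4} to $\alpha=0$, reduce the minimisation of $E^{(0)}$ to that of $F^{(0)}(\xi)=p^{(0)}(\xi)/\xi^{4}$, observe that $F'^{(0)}(\xi)=r(\xi)/\xi^{5}$ (your termwise identity $\xi\,(p^{(0)})'(\xi)-4\,p^{(0)}(\xi)=r(\xi)$ checks out exactly), and use $r(0)=-60\,\mathrm{Ste}^{2}<0$, $r(+\infty)=+\infty$ and $r'>0$ on $(0,+\infty)$ to get a unique positive root which is the global minimiser. The only cosmetic difference is that the paper concludes global minimality from the sign change of $F'^{(0)}$ (decreasing on $(0,\xi_0)$, increasing on $(\xi_0,+\infty)$) rather than from your boundary limits, which is an equivalent argument.
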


\begin{remark} Due to formula (\ref{EcNu40}) we have that the error we commit when approximating with problem (P$_{_4}$) for the case $\alpha=0$ is inversely proportional to the square of time, i.e $E^{(0)}\propto 1/t^2$.
\end{remark}

\begin{proof}
From Theorem \ref{TeoP4} we need to minimize the function $E(\xi)$ given by (\ref{EcNu4}) for the case $\alpha=0$. 
So, it is clear evident that we need to minimize the function $E^{(0)}(\xi)$ given by (\ref{EcNu40}) which is equivalent to minimize the function $F^{(0)}(\xi)=\frac{p^{(0)}(\xi)}{\xi^4}$. Therefore, let us show that $F^{(0)}$ has a unique positive value where the minimum is attained. Observe that $F^{(0)}$ is a continuous function in $\mathbb{R}^{+}$. Moreover if we compute its derivative we obtain
$$F'^{(0)}(\xi)=\frac{r(\xi)}{\xi^5}$$
with $r$ given by (\ref{r}).
As  $r$ is a polynomial that verifies $r(0)=-60 \Ste^2<0$, $r(+\infty)=+\infty$, and $r'(\xi)>0$, $\forall \xi>0$, we obtain that  there exists a unique value $\xi_0>0$ such that $r(\xi_0)=0$. In addition, we can assure that $r(\xi)<0$,  for every $\xi<\xi_0$ and $r(\xi)>0$, for every $\xi>\xi_0$. Consequently we have
$$F'^{(0)}(\xi)<0, \;\;\forall \xi<\xi_0,\qquad F'^{(0)}(\xi_0)=0,\qquad F'^{(0)}(\xi)>0, \;\; \forall \xi>\xi_0. $$
We can conclude that $F^{(0)}$ decreases in $(0,\xi_{0})$ and increases in $(\xi_0,+\infty)$. This means that $F^{(0)}$ has a unique minimum that is attained at $\xi_0$. Calling $\nu_{_4}^{(0)}=\xi_0$  we get that $\nu_{_4}^{(0)}$ is the unique positive root of $r$ and minimizes the error function $E^{(0)}$.
\end{proof}

Taking into account  the last result we show in the Table \ref{Tabla:NuiVsNu4}  the coefficient $\nu$ that characterizes the exact free boundary of problem (P), the approximate coefficient $\nu_{_2}$ obtained by the modified integral balance method (which until now was the most accurate technique) and the coefficient $\nu_{_4}$ defined by the Corollary \ref{CorolarioAlpha0-Temp} for different values of $\Ste$ numbers. Computing also the percentage relative error committed in each case we assure that the approximate problem (P$_{_4}$) is the best approximation we can obtain adopting a quadratic profile in space for the temperature.

 \begin{table}
\small
\caption{{\footnotesize Dimensionless coefficients of the free boundaries and their percentage relative error for $\alpha=0$.}}
\label{Tabla:NuiVsNu4}  
\begin{center}
\begin{tabular}{cc|cc|cc}
\hline
Ste      & $\nu$             &   $ \nu_{_2} $  & $E_{\text{rel}}(\nu_{_2})$      &   $\nu_{_4}$ & $E_{\text{rel}}(\nu_{_4})$   \\
\hline
	0.1  &  0.2200     &   0.2209 &   0.3947 \%  &   0.2209  &  0.3855 \% \\
    0.2  &  0.3064     &   0.3087 &   0.7499 \%  &   0.3086  &  0.7168 \% \\
    0.3  &  0.3699     &   0.3738 &   1.0707 \%  &   0.3736  &  1.0040 \% \\
    0.4  &  0.4212     &   0.4270 &   1.3618 \%  &   0.4265  &  1.2551 \% \\
    0.5  &  0.4648     &   0.4723 &   1.6266 \%  &   0.4716  &  1.4762 \% \\
    0.6  &  0.5028     &   0.5122 &   1.8683 \%  &   0.5112  &  1.6722 \% \\
    0.7  &  0.5365     &   0.5477 &   2.0895 \%  &   0.5464  &  1.8470 \% \\
    0.8  &  0.5669     &   0.5799 &   2.2923 \%  &   0.5783  &  2.0037 \% \\
    0.9  &  0.5946     &   0.6094 &   2.4786 \%  &   0.6074  &  2.1449 \% \\
    1.0  &  0.6201     &   0.6365 &   2.6500 \%  &   0.6342  &  2.2727 \% \\
    \hline
\end{tabular}
\end{center}
\end{table}

 \newpage
 
 In a similar way, we can define a new approximate \textbf{ problem (P$_{_{4h}}$)} for the problem (P$_{h}$) that consists in finding the free boundary $s_{_{4h}}=s_{_{4h}}(t)$ and the temperature $T_{_{4h}}=T_{_{4h}}(x,t)$ in $0<x<s_{_{4h}}(t)$ given by the profile (\ref{PerfilQuadTemp}) such that they minimize the least-squares error (\ref{ErrorCuadraticoGenerico}) subject to to the conditions (\ref{FrontFija:1F-pos-tempinfty-A}$^\star$), (\ref{TempFront:1F-pos-tempinfty-A})-(\ref{FrontInicial:1F-pos-tempinfty-A}).

\begin{teo} \label{TeoP4h}
If a free boundary $s_{_{4h}}$ and a temperature $T_{_{4h}}$ constitute  a solution to problem (P$_{_{4h}}$) then they are given by the expressions:
\begin{eqnarray}
T_{_{4h}}(x,t)&=&t^{\alpha/2}\theta_{_\infty} \left[ A_{_{4h}}\left(1-\frac{x}{s_{_{4h}}(t)}\right) +B_{_{4h}} \left(1-\frac{x}{s_{_{4h}}(t)}\right)^{2}\right],\label{PerfilT4h} \\
s_{_{4h}}(t)&=& 2a\nu_{_{4h}} \sqrt{t},\label{s4h}
\end{eqnarray}
where the constants $A_{_{4h}}$ and $B_{_{4h}}$ are defined as a function of $\nu_{_{4h}}$ as
\begin{align} \label{A4h}
A_{_{4h}}=\frac{2^{\alpha+1}  \nu_{_{4h}}^{\alpha+2}  }{\mathrm{Ste}}, \qquad \qquad B_{_{4h}}= \frac{2 \mathrm{Bi}\; \nu_{_{4h}}-A_{_{4h}} (1+2\mathrm{Bi}\; \nu_{_{4h}})}{2(1+\mathrm{Bi}\; \nu_{_{4h}})}
\end{align}
and where $\nu_{_{4h}}>0$ must minimize for every $t>0$, the real function:
\begin{align}
E_{_h}(\xi)&=\frac{t^{\alpha-2} \theta_{_\infty}^2}{60\; \mathrm{Ste}^2  (\frac{1}{\mathrm{Bi}}+\xi)^2}\; \cdot \; \nonumber\\
&  \left\lbrace p(\xi)+ \frac{1}{{\mathrm{Bi}}} \left[ 2^{2 \alpha } \left(7 \alpha ^2+7 \alpha +18\right) x^{2 \alpha +7}  +25\ 2^{2 \alpha +1} (\alpha +1) x^{2 \alpha +5},\right.\right.\nonumber\\
& +2^{\alpha } \left(9 \alpha ^2+9 \alpha +6\right) \mathrm{Ste} x^{\alpha +5}+15\ 2^{2 \alpha +2} x^{2 \alpha +3}\nonumber\\
&\left.-5\; 2^{\alpha +1} (\alpha +1) \mathrm{Ste}\;  x^{\alpha +3} -15 \;2^{\alpha +1} \mathrm{Ste}\; x^{\alpha +1}\right]\nonumber\\
&+ \left. \frac{1}{\mathrm{Bi}^2} \left[4^{\alpha +1} \left(2 \alpha ^2+2 \alpha +3\right) x^{2 \alpha +6}+ 5\ 4^{\alpha +1} (\alpha +1) x^{2 \alpha +4}+ 15\ 4^{\alpha } x^{2 \alpha +2} \right] \right\rbrace  \label{EcNu4h}
\end{align}
with $p(\xi)$ given by formula (\ref{p4}) 

\end{teo}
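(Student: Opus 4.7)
The plan is to mimic the proof of Theorem~\ref{TeoP4} step by step, swapping only the condition at the fixed face. The argument divides into three stages: first, determine $s_{_{4h}}$ and $A_{_{4h}}$ from the Stefan condition; second, determine $B_{_{4h}}$ from the Robin condition; third, insert everything into the generic least-squares error and conclude that $\nu_{_{4h}}$ is characterised as a minimiser.

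Stage one is identical to the corresponding step in Theorem~\ref{TeoP4}. Since $T_{_{4h}}$ is postulated to have the quadratic shape (\ref{PerfilQuadTemp}), the phase-change condition (\ref{TempFront:1F-pos-tempinfty-A}) holds automatically, and imposing the Stefan condition (\ref{CondStefan:1F-pos-tempinfty-A}) forces $s_{_{4h}}(t)\propto\sqrt{t}$, yielding both (\ref{s4h}) and the identity $A_{_{4h}}=2^{\alpha+1}\nu_{_{4h}}^{\alpha+2}/\mathrm{Ste}$ exactly as in the derivation of (\ref{A4}).

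For stage two, I would evaluate $T_{_{4h}}(0,t)=t^{\alpha/2}\theta_{_\infty}(A_{_{4h}}+B_{_{4h}})$ and $\partial_x T_{_{4h}}(0,t)=-t^{\alpha/2}\theta_{_\infty}(A_{_{4h}}+2B_{_{4h}})/s_{_{4h}}(t)$, substitute both into the Robin condition (\ref{FrontFijaConvectiva}), cancel the common time factor, and use $\mathrm{Bi}=ah/k$ together with $s_{_{4h}}(t)=2a\nu_{_{4h}}\sqrt{t}$. A short manipulation yields the linear relation
\[
A_{_{4h}}\bigl(1+2\mathrm{Bi}\,\nu_{_{4h}}\bigr)+2B_{_{4h}}\bigl(1+\mathrm{Bi}\,\nu_{_{4h}}\bigr)=2\mathrm{Bi}\,\nu_{_{4h}},
\]
which, solved for $B_{_{4h}}$, gives the formula in (\ref{A4h}).

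Stage three is where the main obstacle lies. Once $A_{_{4h}}$, $B_{_{4h}}$ and the self-similar $\hat{s}(t)=2a\xi\sqrt{t}$ are inserted into the generic error expression (\ref{ErrorCuadraticoGenerico2}), the whole quantity becomes a function of $\xi>0$ alone (up to the overall $t^{\alpha-2}$ factor), and $\nu_{_{4h}}$ is by definition a minimiser. The algebra is heavier than in Theorem~\ref{TeoP4} because, in contrast to $B_{_4}=1-A_{_4}$, the coefficient $B_{_{4h}}$ carries a rational factor $1/(1+\mathrm{Bi}\,\nu_{_{4h}})$; squaring it is precisely what produces the overall prefactor $1/(1/\mathrm{Bi}+\xi)^2$ in (\ref{EcNu4h}). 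I would expand $B_{_{4h}}$ and $B_{_{4h}}^2$, factor out that common denominator, and group the resulting numerator by powers of $1/\mathrm{Bi}$: the $\mathrm{Bi}$-independent part must reproduce the polynomial $p(\xi)$ from (\ref{p4}), while the $1/\mathrm{Bi}$ and $1/\mathrm{Bi}^2$ contributions have to match the two bracketed polynomials displayed in (\ref{EcNu4h}). A useful consistency check at the end is to verify that $E_h(\xi)\to E(\xi)$ and $B_{_{4h}}\to B_{_4}$ as $\mathrm{Bi}\to\infty$, in agreement with the convergence results of Theorems~\ref{ConvergenciaP2h} and~\ref{ConvergenciaP3h}. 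No new conceptual ingredient is needed and, unlike Corollary~\ref{CorolarioAlpha0-Temp}, the statement is purely variational, so no additional existence-of-minimiser argument is required.
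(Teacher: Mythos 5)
Your proposal matches the paper's proof essentially step for step: the Stefan condition yields $s_{_{4h}}(t)=2a\nu_{_{4h}}\sqrt{t}$ together with $A_{_{4h}}=2^{\alpha+1}\nu_{_{4h}}^{\alpha+2}/\mathrm{Ste}$, the Robin condition yields the same linear relation $A_{_{4h}}(1+2\mathrm{Bi}\,\nu_{_{4h}})+2B_{_{4h}}(1+\mathrm{Bi}\,\nu_{_{4h}})=2\mathrm{Bi}\,\nu_{_{4h}}$ and hence $B_{_{4h}}$, and substitution into the generic error reduces the claim to the minimisation of $E_h$ over $\xi>0$, exactly as in the paper. The only difference is cosmetic --- the paper delegates the final expansion to Mathematica while you sketch the same grouping by powers of $1/\mathrm{Bi}$ by hand; your $\mathrm{Bi}\to\infty$ consistency check is sound and in fact exposes a typo in the displayed formula (\ref{EcNu4h}), whose prefactor should carry $\xi^2\left(\tfrac{1}{\mathrm{Bi}}+\xi\right)^2$ in the denominator, as the $\alpha=0$ specialisation (\ref{EcNu40h}) confirms.
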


 \begin{proof}
It is clear immediate that the chosen profile temperature leads the condition (\ref{TempFront:1F-pos-tempinfty-A}) to be automatically verified.
From condition (\ref{CondStefan:1F-pos-tempinfty-A})  we obtain
\be
-kt^{\alpha/2}\theta_{_\infty}\frac{A_{_{4h}}}{s_{_{4h}}(t)}=-\gamma s_{_{4h}}^{\alpha}(t)\dot s_{_{4h}}(t).
\ee
Therefore
\be
s_{_{4h}}(t)=\left( \frac{(\alpha+2)}{(\frac{\alpha}{2}+1)} \frac{k\theta_{_\infty}}{\gamma}A_{_{4h}}\right)^{1/(\alpha+2)} \sqrt{t}.
\ee
Introducing the new coefficient $\nu_{_{4h}}$ such that $ \nu_{_{4h}}=\frac{1}{2a}\left( \frac{(\alpha+2)}{(\frac{\alpha}{2}+1)} \frac{k\theta_{_\infty}}{\gamma}A_{_{4h}}\right)^{1/(\alpha+2)}$, the free boundary can be expressed as
\be 
s_{_{4h}}(t)=2a\; \nu_{_{4h}}\sqrt{t},
\ee
where the following equality holds
\be\label{Ec1:P4h}
A_{_{4h}}=\frac{2^{\alpha+1} \nu_{_{4h}}^{\alpha+2}}{\Ste} .
\ee
The  convective boundary condition at $x=0$, i.e. condition (\ref{FrontFijaConvectiva}), leads to
\be \label{Ec2:P4h}
A_{_{4h}}(1+2\Bt\; \nu_{_{4h}})+2B_{_{4h}}(1+\Bt\;\nu_{_{4h}})=2\Bt\;\nu_{_{4h}}.
\ee

Therefore we obtain the formulas  given in (\ref{A4h}). Replacing $A_{_{4h}}$, $B_{_{4h}}$ and $s_{_{4h}}$ for their expressions in function of $\nu_{_{4h}}$, minimizing the least-squares error (\ref{ErrorCuadraticoGenerico}) is equivalent to minimizing (\ref{EcNu4h}) (obtained by Mathematica software).
\end{proof}

\begin{corollary} \label{CorolarioAlpha0-Conv}For the classical Stefan problem, i.e. for the case $\alpha=0$, we get that if  $\mathrm{Bi}>\frac{1}{\sqrt{12}}$ and $\mathrm{Ste}<\frac{1}{2\mathrm{Bi}^2}$, then (P$_{_{4h}}$) has a unique solution given by 
\begin{eqnarray}
T_{_{4h}}^{(0)}(x,t)&=&\theta_{_\infty} \left[ A_{_{4h}}^{(0)}\left(1-\frac{x}{s_{_{4h}^{(0)}}(t)}\right) +B_{_{4h}^{(0)}} \left(1-\frac{x}{s_{_{4h}^{(0)}}(t)}\right)^{2}\right],\label{PerfilT40h} \\
s_{_{4h}}^{(0)}(t)&=& 2a\nu_{_{4h}}^{(0)} \sqrt{t},
\end{eqnarray}
where the superscript $(0)$ makes reference to the value of $\alpha=0$ and the constants $A_{_{4h}}^{(0)}$ and $B_{_{4h}}^{(0)}$ are defined as a function of $\nu_{_{4h}}^{(0)}$ as
\begin{align} \label{A40h}
A_{_{4h}}^{(0)}=\frac{2 (\nu_{_{4h}}^{(0)})^{2}  }{\mathrm{Ste}}, \qquad \qquad B_{_{4h}}^{(0)}=\frac{2\mathrm{Bi}\;\nu_{_{4h}}^{(0)}-A_{_{4h}}^{(0)} (1+2\mathrm{Bi}\nu_{_{4h}}^{(0)} ) }{2(1+\nu_{_{4h}}^{(0)} \mathrm{Bi})}
\end{align}
being $\nu_{_{4h}}^{(0)}>0$  the value where the function $E_{h}^{(0)}$ attains its minimum
\begin{align}
E_{h}^{(0)}(\xi)&=\frac{t^{-2} \theta_{_\infty}^2}{60 \mathrm{Ste}^2 x^2(\frac{1}{\mathrm{Bi}}+x)^2}\left\lbrace p^{(0)}(\xi)+\frac{1}{{\mathrm{Bi}}}\left[2x(9x^6+(3\mathrm{Ste}+25)x^4\right.\right.\nonumber\\
&\left.\left. +5(6-\mathrm{Ste})x^2-15\mathrm{Ste}  )\right]+ \frac{1}{{\mathrm{Bi}^2}} x^2(12x^4+20x^2+15)\right\rbrace \label{EcNu40h}
\end{align}
where $p^{(0)}$ is given by (\ref{p40}).
Moreover, $\nu_{_{4h}}^{(0)}$ can be obtained as the unique positive root of the following polynomial:
\begin{align}
r_{_h}(\xi)&= 16 \mathrm{Bi}^3 \xi^9+51 \mathrm{Bi}^2 \xi^8+\xi^7 \left(2 \mathrm{Bi}^3 \mathrm{Ste}+20 \mathrm{Bi}^3+57 \mathrm{Bi}\right)\nonumber\\
&+\xi^6 \left(7 \mathrm{Bi}^2 \mathrm{Ste}+65 \mathrm{Bi}^2+24\right)\nonumber\\
&+\mathrm{Bi} (3 \mathrm{Ste}+25) \xi^5+\xi^4 \left(\mathrm{Bi}^2 \left(2 \mathrm{Ste}^2+15 \mathrm{Ste}+30\right)+20\right)\nonumber\\
&+5 \mathrm{Bi} (3 + (-1 + 12 \mathrm{Bi}^2)  \mathrm{Ste} + 2 \mathrm{Bi}^2  \mathrm{Ste}^2) \xi^3+45 \mathrm{Bi}^2 \mathrm{Ste} \xi^2\nonumber\\
&+15 \mathrm{Bi}  \mathrm{Ste} (1 - 2 \mathrm{Bi}^2 \mathrm{Ste}) \xi-15 \mathrm{Bi}^2  \mathrm{Ste}^2.\label{rh}
\end{align}
\end{corollary}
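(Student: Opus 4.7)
The plan is to follow the same pattern as the proof of Corollary~\ref{CorolarioAlpha0-Temp} for the Dirichlet case. First I would specialise Theorem~\ref{TeoP4h} to $\alpha=0$: the formulas (\ref{A4h}) reduce immediately to (\ref{A40h}), and substituting $\alpha=0$ into (\ref{EcNu4h}) together with the definition (\ref{p4}) of $p(\xi)$, or equivalently simplifying (\ref{ErrorCuadraticoGenerico2}) with $A_{_{4h}}^{(0)}$, $B_{_{4h}}^{(0)}$ and $s_{_{4h}}^{(0)}$, yields the expression (\ref{EcNu40h}) for $E_{h}^{(0)}(\xi)$. So the remaining task is to prove that $E_{h}^{(0)}$ has a unique positive minimiser, characterised as the unique positive root of the polynomial $r_{_h}$ in (\ref{rh}).

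Next I would compute $\tfrac{d}{d\xi}E_{h}^{(0)}(\xi)$. Writing $E_{h}^{(0)}(\xi) = \tfrac{t^{-2}\theta_{_\infty}^2}{60\,\mathrm{Ste}^2}\, \tfrac{Q(\xi)}{\xi^{2}(1/\mathrm{Bi}+\xi)^{2}}$ with $Q$ the quantity inside the braces in (\ref{EcNu40h}), the derivative has the form $\tfrac{N(\xi)}{\xi^{3}(1/\mathrm{Bi}+\xi)^{3}}$ with $N$ a polynomial. A direct computation (readily handled by a symbolic algebra system as the authors do for (\ref{EcNu4h})) shows that $N$ is, up to a positive multiplicative constant, exactly the polynomial $r_{_h}(\xi)$ given by (\ref{rh}). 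Hence the critical points of $E_{h}^{(0)}$ on $(0,\infty)$ are the positive roots of $r_{_h}$.

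The main step, and the only place where the standing hypotheses $\mathrm{Bi}>1/\sqrt{12}$ and $\mathrm{Ste}<1/(2\mathrm{Bi}^{2})$ are used, is to show that $r_{_h}$ has a unique positive root. I would read off the coefficients of $r_{_h}$ from (\ref{rh}) and check they are all positive, except for the constant term $-15\mathrm{Bi}^{2}\mathrm{Ste}^{2}<0$. The two coefficients that are not manifestly positive are
\begin{equation*}
[\xi^{3}]:\; 5\mathrm{Bi}\bigl(3+(-1+12\mathrm{Bi}^{2})\mathrm{Ste}+2\mathrm{Bi}^{2}\mathrm{Ste}^{2}\bigr),\qquad [\xi^{1}]:\; 15\mathrm{Bi}\,\mathrm{Ste}(1-2\mathrm{Bi}^{2}\mathrm{Ste}).
\end{equation*}
The assumption $\mathrm{Bi}>1/\sqrt{12}$ makes $-1+12\mathrm{Bi}^{2}>0$, so the $\xi^{3}$ coefficient is positive; the assumption $\mathrm{Ste}<1/(2\mathrm{Bi}^{2})$ makes the $\xi^{1}$ coefficient positive. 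Consequently every coefficient of $r_{_h}'$ is strictly positive, so $r_{_h}'(\xi)>0$ for all $\xi>0$; together with $r_{_h}(0)=-15\mathrm{Bi}^{2}\mathrm{Ste}^{2}<0$ and $r_{_h}(+\infty)=+\infty$, this yields a unique positive root $\nu_{_{4h}}^{(0)}$.

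Finally, to see that $\nu_{_{4h}}^{(0)}$ actually minimises $E_{h}^{(0)}$, I would observe that $E_{h}^{(0)}$ is continuous on $(0,\infty)$ with $\lim_{\xi\to 0^{+}}E_{h}^{(0)}(\xi)=+\infty$ (the $-15\mathrm{Ste}^{2}$ term in $Q$ times $1/\xi^{2}$) and $\lim_{\xi\to+\infty}E_{h}^{(0)}(\xi)=+\infty$ (from the highest-degree terms); having a single critical point on $(0,\infty)$, this critical point must be the global minimum. The main obstacle here is purely computational, namely verifying by a careful algebraic reduction that the numerator of $(E_{h}^{(0)})'$ coincides with $r_{_h}$ up to a positive factor; the monotonicity argument for $r_{_h}$ is then a direct sign check once the two hypotheses on $\mathrm{Bi}$ and $\mathrm{Ste}$ are invoked.
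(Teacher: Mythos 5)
Your proposal is correct and takes essentially the same route as the paper's own proof: specialise Theorem \ref{TeoP4h} to $\alpha=0$ to get (\ref{A40h}) and (\ref{EcNu40h}), compute $E_{h}'(\xi)=\frac{\theta_{_\infty}}{30\,\mathrm{Ste}^2 t^2 \xi^3(\mathrm{Bi}\,\xi+1)^3}\,r_{h}(\xi)$, and use $r_{h}(0)<0$, $r_{h}(+\infty)=+\infty$ and $r_{h}'>0$ under the two hypotheses to obtain a unique positive root that is the unique minimiser. Your explicit identification of the $\xi^{3}$ and $\xi^{1}$ coefficients as the only ones requiring $\mathrm{Bi}>1/\sqrt{12}$ and $\mathrm{Ste}<1/(2\mathrm{Bi}^{2})$, respectively, is simply a spelled-out version of the paper's terser sign check.
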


\begin{proof}
When we replace $\alpha=0$ in Theorem \ref{TeoP4h} we immediately obtain the formulas (\ref{A40h}) and (\ref{EcNu40h}).
In order to prove that there exists a unique value that minimizes the least-squares error, we compute $E'_{h}(\xi)$ and we get that $E_{h}'(\xi)=\frac{\theta_{_\infty}}{30 \mathrm{Ste}^2 t^2 \xi^3 (\mathrm{Bi} \xi+1)^3}r_{h}(\xi)$ with $r_{h}$ given by (\ref{rh}). We can observe that $r_{h}(0)<0$, $r_{h}(+\infty)=+\infty$, $r'_{h}>0$ under the hypothesis that  $\mathrm{Bi}>\frac{1}{\sqrt{12}}$ and $\mathrm{Ste}<\frac{1}{2\mathrm{Bi}^2}$. Therefore, we can assure that there exists a unique $\xi_{_{h0}}$ such that $r_{h}(\xi_{_{h0}})=0$. In addition we have that $r_{h}(\xi)<0$, $\forall \xi<\xi_{_{h0}}$ and $r_{h}(\xi)>0$, $\forall \xi>\xi_{_{h0}}$. Then we get that $E_{h}(\xi)$ decreases $\forall \xi<\xi_{_{h0}}$ and increases $\forall \xi>\xi_{_{h0}}$. Consequently we obtain that $\xi_{_{h0}}$ constitutes the unique minimum of the least-squares error.

\end{proof}

In view of the above result, for $\alpha=0$ we compare the coefficient $\nu_{h}$ that characterizes the exact free boundary problem with the coefficient $\nu_{_{2h}}$ corresponding to the modified integral method, which was until now the most accurate, and we also compare with the coefficient $\nu_{_{2h}}$ obtained when minimizing the least-squares error. We fix $\Ste=0.02$ and vary $\Bt$ between 1 and 5. The value of this parameters are chosen in order to verify the hypothesis of Corollary \ref{CorolarioAlpha0-Conv}. By computing the percentage relative error of each method we conclude that the approximate problem (P$_{_{4h}}$) gives us the best approximate solution to problem (P$_{h}$).

 \begin{table}
\small
\caption{{\footnotesize Coefficients of the free boundaries and their percentage relative error for $\alpha=0$ and $\Ste=0.02$.}}
\label{Tabla:NuihVsNu4h}  
\begin{center}
\begin{tabular}{cc|cc|cc}
\hline
$\Bt$      & $\nu_{h}$             &   $ \nu_{_{2h}} $  & $E_{\text{rel}}(\nu_{_{2h}})$      &   $\nu_{_{4h}}$ & $E_{\text{rel}}(\nu_{_{4h}})$   \\
\hline
    1.0000  &  0.0193 &   0.0193  &  0.0002 \% &   0.0193 &   0.0002 \% \\
    2.0000  &  0.0350 &   0.0350  &  0.0023 \% &   0.0350 &   0.0022 \%\\
    3.0000  &  0.0468 &   0.0468  &  0.0066 \% &   0.0468 &   0.0064 \%\\
    4.0000  &  0.0553 &   0.0553  &  0.0120 \% &   0.0553 &   0.0117 \%\\
    5.0000  &  0.0617 &   0.0617  &  0.0175 \% &   0.0617 &   0.0172 \%\\
     \hline
\end{tabular}
\end{center}
\end{table}
    
    In case we decide to use the formula  (\ref{rh}) to compute $\nu_{_{4h}}$ without satisfying the hypothesis of the Corollary \ref{CorolarioAlpha0-Conv}, fixing $\Ste=0.5$ and varying $\Bt$ from 1 to 100 we get the results shown int Table \ref{Tabla:NuihVsNu4h1}.
    
     \begin{table}
\small
\caption{{\footnotesize Coefficients of the free boundaries and their percentage relative error for $\alpha=0$ and $\Ste=0.02$.}}
\label{Tabla:NuihVsNu4h1}  
\begin{center}
\begin{tabular}{cc|cc|cc}
\hline
$\Bt$      & $\nu_{h}$             &   $ \nu_{_{2h}} $  & $E_{\text{rel}}(\nu_{_{2h}})$      &   $\nu_{_{4h}}$ & $E_{\text{rel}}(\nu_{_{4h}})$   \\
\hline
    1 &    0.2926 &   0.2937  &  0.3939 \% &   0.2933 &   0.2600 \%\\
   10 &    0.4422 &   0.4484  &  1.4111 \% &   0.4477 &   1.2478  \%\\
   20 &    0.4533 &   0.4602  &  1.5151 \% &   0.4595 &   1.3576 \%\\
   30 &    0.4571 &   0.4642  &  1.5514 \% &   0.4635 &   1.3962 \%\\
   40 &    0.4590 &   0.4662  &  1.5699 \% &   0.4655 &   1.4158 \%\\
   50 &    0.4601 &   0.4674  &  1.5811 \% &   0.4667 &   1.4277 \%\\
   60 &    0.4609 &   0.4682  &  1.5886 \% &   0.4675 &   1.4357 \%\\
   70 &    0.4615 &   0.4688  &  1.5940 \% &   0.4681 &   1.4414 \%\\
   80 &    0.4619 &   0.4693  &  1.5980 \% &   0.4686 &   1.4457 \%\\
   90 &    0.4622 &   0.4696  &  1.6012 \% &   0.4689 &   1.4491 \%\\
  100 &    0.4625 &   0.4699  &  1.6037 \% &   0.4692 &   1.4518 \%\\
\hline
    \end{tabular}
\end{center}
\end{table}
    \newpage

\section{Conclusions}

In this paper we have studied different approximate methods for one-dimensional one-phase Stefan problems where the main feature consists in taking a space-dependent latent heat. We have considered two different problems that differ from each other in their boundary condition at the fixed face $x=0$: Dirichlet or Robin condition. We have implemented the classical heat balance integral method, a modified integral method and the refined integral method. Exploiting the knowledge of the exact solution of both problems (available in literature), we have studied the accuracy of the different approximations obtained. All the analysis have been carried out using dimensionless parameters like Stefan number and Biot number. Furthermore we have studied the case when Bi goes to infinity in the problem with a convective condition, recovering the approximate  solutions  when a temperature condition is imposed at the fixed face.  We provided some numerical simulations and we have concluded that in the majority of  cases the modified integral method is the most reliable in terms of accuracy. When approaching by the minimization of the least-squares error, we get better approximations but only for the case $\alpha=0$ (where we could prove existence and uniqueness of solution). The least accurate method was the classical heat balance integral method, not only to the high percentage error committed but also because we could not obtain a result that assures uniqueness of the approximate solution.

\section*{Acknowledgement}

The present work has been partially sponsored by the Project PIP No. 0275 from CONICET-UA, Rosario, Argentina, by the Project ANPCyT PICTO Austral 2016 No. 0090, and by the European Union's Horizon 2020 Research and Innovation Programme under the Marie Sklodowska-Curie grant agreement 823731 CONMECH.

The authors would like to thank the two anonymous referees for their helpful comments.

\bibliographystyle{elsarticle-num} 
\bibliography{Biblio}

\begin{thebibliography}{10}
\expandafter\ifx\csname url\endcsname\relax
  \def\url#1{\texttt{#1}}\fi
\expandafter\ifx\csname urlprefix\endcsname\relax\def\urlprefix{URL }\fi
\expandafter\ifx\csname href\endcsname\relax
  \def\href#1#2{#2} \def\path#1{#1}\fi

\bibitem{AlSo1993}
V.~Alexiades, A.~D. Solomon, Mathematical modeling of melting and freezing
  processes, Hemisphere Publishing Corp., Washington, 1993.

\bibitem{Ca1984}
J.~R. Cannon, The one-dimensional heat equation, Addison-Wesley, Menlo Park,
  California, 1984.

\bibitem{Gu2003}
S.~C. Gupta, The classical {S}tefan problem. Basic concepts, modelling and
  analysis, Elsevier, Amsterdam, 2003.

\bibitem{Lu1991}
V.~J. Lunardini, Heat transfer with freezing and thawing, Elsevier Science
  Publishers B. V., 1991.

\bibitem{Ta2000}
D.~A. Tarzia, A bibliography on moving-free boundary problems for heat
  diffusion equation. {T}he {S}tefan problem, MAT-Serie A 2 (2000) 1--297.

\bibitem{Ta2011}
D.~A. Tarzia, Explicit and Approximated Solutions for Heat and Mass Transfer
  Problems with a Moving Interface, Prof. Mohamed El-Amin (Ed.), In Tech,
  Rijeka, 2011, Ch. 20, Advanced Topics in Mass Transfer, pp. 439--484.

\bibitem{SwVoPa2000}
J.~Swenson, V.~Voller, C.~Paola, G.~Parker, J.~Marr, Fluvio-deltaic
  sedimentation: a generalized stefan problem, European Journal of Applied
  Mathematics 11 (2000) 433--452.

\bibitem{ZhShZh2018}
Y.~Zhou, X.~Shi, G.~Zhou, Exact solution for a two-phase {S}tefan problem with
  power-type latent heat, Journal of Engineering Mathematics, $ $ 110 (2018)
  1--13.

\bibitem{RiMy2016}
H.~Ribera, T.~Myers, A mathematical model for nanoparticle melting with
  size‑dependent latent heat and melt temperature, Microfluid Nanofluid
  20-147 (2016) 1--13.

\bibitem{ZhBuLu2013}
Y.~Zhou, W.~Bu, M.~Lu, One-dimensional consolidation with a threshold gradient:
  a {S}tefan problem with rate-dependent latent heat, International Journal for
  Numerical and Analytical Methods in Geomechanics 37 (2013) 2825--2832.

\bibitem{VoSwPa2004}
V.~R. Voller, J.~Swenson, C.~Paola, An analytical solution for a {S}tefan
  problem with variable latent heat, International Journal of Heat and Mass
  transfer 47 (2004) 5387--5390.

\bibitem{ZhWaBu2014}
Y.~Zhou, Y.~Wang, W.~Bu, Exact solution for a {S}tefan problem with a latent
  heat a power function of position, International Journal of Heat and Mass
  Transfer 69 (2014) 451--454.

\bibitem{ZhXi2015}
Y.~Zhou, L.~Xia, Exact solution for {S}tefan problem with general power-type
  latent heat using {K}ummer function, International Journal of Heat and Mass
  Transfer 84 (2015) 114--118.

\bibitem{SaTa2011-a}
N.~N. Salva, D.~A. Tarzia, Explicit solution for a {S}tefan problem with
  variable latent heat and constant heat flux boundary conditions, Journal of
  Mathematical Analysis and Applications 379 (2011) 240--244.

\bibitem{BoTa2018-CAA}
J.~Bollati, D.~Tarzia, Explicit solution for the one-phase {S}tefan problem
  with latent heat depending on the position and a convective boundary
  condition at the fixed face, Communications in Applied Analysis 22 (2018)
  309--332.

\bibitem{BoTa2018-ZAMP}
J.~Bollati, D.~Tarzia, Exact solution for a two-phase {S}tefan problem with
  variable latent heat and a convective boundary condition at the fixed face,
  Z. Angew. Math. Phys. 69-38 (2018) 1--15.

\bibitem{Do2014}
T.~Dorian, Spatial and temporal variability of latent heating in the tropics
  using TRMM observations, Master of science thesis, University of
  Misconsin-Madison, 2014.

\bibitem{Mc1991}
T.~McConnell, The two-sided stefan problem with a spatially dependent latent
  heat, Transactions of the American Mathematical Society 326 (1991) 669--699.

\bibitem{ZhHuLiZhZh2018}
Y.~Zhou, X.~Hu, T.~Li, D.~Zhang, G.~Zhou, Similarity type of general solution
  for one-dimensional heat conduction in the cylindrical coordinate,
  International Journal of Heat and Mass Transfer 119 (2018) 542--550.

\bibitem{Pr1970}
M.~Primicerio, Stefan-like problems with space-dependent latent heat, Meccanica
  5 (1970) 187--190.

\bibitem{Go2002}
H.~Gottlieb, Exact solution of a {S}tefan problem in a nonhomogeneous cylinder,
  Applied Mathematics Letters 15 (2002) 167--172.

\bibitem{Go1958}
T.~Goodman, The heat balance integral methods and its application to problems
  involving a change of phase, Transactions of the ASME 80 (1958) 335--342.

\bibitem{Wo2001}
A.~S. Wood, A new look at the heat balance integral method, Applied
  Mathematical Modelling 25 (2001) 815--824.

\bibitem{SaSiCo2006}
N.~Sadoun, E.~Si-ahmed, J.~Colinet, On the refined integral method for the
  one-phase stefan problem with time-dependent boundary conditions, Applied
  Mathematical Modelling 30 (2006) 531--544.

\bibitem{Hr2009-a}
J.~Hristov, The heat-balance integral method by a parabolic profile with
  unspecified exponent:analysis and benchmark exercises, Thermal Science 13
  (2009) 27--48.

\bibitem{Hr2009-b}
J.~Hristov, Research note on a parabolic heat-balance integral method with
  unspecified exponent: An entropy generation approach in optimal profile
  determination, Thermal Science 13 (2009) 49--59.

\bibitem{Mi2012}
S.~L. Mitchell, Applying the combined integral method to one-dimensional
  ablation, Applied Mathematical Modelling 36 (2012) 127--138.

\bibitem{MiMy2010-a}
S.~L. Mitchell, T.~Myers, Improving the accuracy of heat balance integral
  methods applied to thermal problems with time dependent boundary conditions,
  International Journal of Heat and Mass Transfer 53 (2010) 3540--3551.

\bibitem{Ta2017}
D.~A. Tarzia, Relationship between {N}eumann solutions for two-phase
  {L}am\'e-{C}lapeyron-{S}tefan problems with convective and temperature
  boundary conditions, Thermal Science 21-1 (2017) 187--197.

\bibitem{BoSeTa2018}
J.~Bollati, J.~Semitiel, D.~Tarzia, Heat balance integral methods applied to
  the one-phase {S}tefan problem with a convective boundary condition at the
  fixed face, Applied Mathematics and Computation 331 (2018) 1--19.

\bibitem{RiMyMc2019}
H.~Ribera, T.~Myers, M.~Mac~Davette, Optimising the heat balance integral
  method in spherical and cilyndrical stefan problems, Applied Mathematics and
  Computation 354 (2019) 216--231.

\bibitem{BoTa2018-EJDE}
J.~Bollati, D.~Tarzia, One-phase {S}tefan problem with a latent heat depending
  on the position of the free boundary and its rate of change, Electronic
  Journal of Differential Equations 2018-10 (2018) 1--12.

\bibitem{So1979}
A.~D. Solomon, An easily computable solution to a two-phase {S}tefan problem,
  Solar energy 33 (1979) 525--528.

\end{thebibliography}

\end{document}